\newtheorem{theorem}{Theorem}
\theoremstyle{plain}
\newtheorem{corollary}{Corollary}
\newtheorem{definition}{Definition}
\newtheorem{example}{Example}
\newtheorem{lemma}{Lemma}
\newtheorem{proposition}{Proposition}
\newtheorem{remark}{Remark}
\numberwithin{equation}{section}
\begin{document}
\begin{center}
\vspace*{1.3cm}

\textbf{PROPER EFFICIENCY AND CONE EFFICIENCY}

\bigskip

by

\bigskip

PETRA WEIDNER\footnote{HAWK Hochschule f\"ur angewandte Wissenschaft und Kunst Hildesheim/\-Holzminden/ G\"ottingen University of Applied Sciences and Arts, Faculty of Natural Sciences and Technology,
D-37085 G\"ottingen, Germany, {petra.weidner@hawk-hhg.de}.}

\bigskip
\bigskip
Research Report \\ 
May 10, 2017

\end{center}

\bigskip
\bigskip

\noindent{\small {\textbf{Abstract:}}
In this report, two general concepts for proper efficiency in vector optimization are studied. Properly efficient elements can be defined as minimizers of functionals with certain monotonicity properties or as weakly efficient elements with respect to sets that contain the domination set. Interdependencies between both concepts are proved in topological vector spaces by means of Gerstewitz functionals. The investigation includes proper efficiency notions introduced by Henig and by Nehse and Iwanow. In contrary to Henig's notion, proper efficiency by Nehse and Iwanow is defined as efficiency with respect to certain convex sets which are not necessarily cones.
For the finite-dimensional case, we turn to Geoffrion's proper efficiency as a special case of Henig's proper efficiency. It is characterized as efficiency with regard to subclasses of the set of polyhedral cones. Conditions for the existence of Geoffrion's properly efficient points are proved. For closed feasible point sets, Geoffrion's properly efficient point set is empty or coincides with that of Nehse and Iwanow. Properly efficient elements by Nehse and Iwanow are the minimizers of continuous convex functionals with certain monotonicity properties. Henig's proper efficiency can be described by means of minimizers of continuous sublinear functionals with certain monotonicity properties.
}

\bigskip

\noindent{\small {\textbf{Keywords:}} 
Vector optimization; multicriteria optimization; weak efficiency; scalarization}

\bigskip

\noindent{\small {\textbf{Mathematics Subject Classification (2010): }
90C29, 46N10}}

\section{Introduction}

Consider the vector optimization problem given as
\[\tag{VOP}
D\mbox{--}\min_{x\in S} f(x)
\]
under the assumption that $S$ is an arbitrary nonempty set, $f:S\rightarrow Y$, and that $D$ is a nonempty subset of $Y$. Throughout this paper, $Y$ is assumed to be a real topological vector space.

$D$ is the set which defines the solution concept for (VOP). Imagine that for each $y^0\in F:=f(S)$ the set of elements in $F$ that is preferred to $y^0$ is just $F\cap (y^0-(D\setminus\{0\}))$. Then we are interested in the set 
$$\operatorname*{Min}(F,D):=\{y^{0}\in F\mid F\cap (y^0-D)\subseteq\{y^0\}\}$$ 
of efficient points of $F$ with regard to (w.r.t.) $D$. We will call $D$ the domination set of (VOP).

The author (\cite{Wei83}, \cite{Wei85}, \cite{Wei90}, \cite{Wei16b}) studied vector optimization problems under such general assumptions motivated by decision theory, especially by ideas from Yu \cite{yu74}. Related bibliographical notes are given in \cite{Wei16b}. If $D$ is an ordering cone in $Y$, $\operatorname*{Min}(F,D)$ is the set of elements of $F$ that are minimal w.r.t. the cone order $\leq_D$. In the literature, vector optimization problems are usually defined with domination sets that are ordering cones.

In general, it is easier to determine solutions to vector optimization problems w.r.t. open domination sets.
We define $\operatorname*{WMin}(F,D):=\operatorname*{Min}(F,\operatorname*{int}D)$ as the set of 
weakly efficient elements of $F$ w.r.t. $D$, where $\operatorname*{int}D$ denotes the topological interior of $D$.
In many vector optimization problems, weakly efficient points can be characterized as minimal solutions of some scalar functions, whereas efficiency of a point can only be shown if the minimizer of a functional is unique, which is difficult to check. Since the weakly efficient point set can be much more comprehensive than the efficient point set, one looks for possibilities to guarantee efficiency of solutions. This is the reason why different notions of proper efficiency came into existence. Properly efficient point sets are subsets of the efficient point set and usually defined in such a way that they can be easier determined than efficient points in general. Properly efficient points can be used to approximate the efficient point set if the 
properly efficient point set is dense in the efficient point set. This paper focuses on proper efficiency that is weak efficiency w.r.t. other domination sets. The proper efficiency notions of Henig \cite{Hen82}, of Nehse and Iwanow \cite{iwne84} and of Geoffrion \cite{geof68} fit into this framework. This offers the possibility to apply the scalarization results for weakly efficient elements which were given in \cite{GerWei90}, \cite{Wei90} and in \cite{Wei16b} to proper efficiency.

After some preliminaries in Section \ref{s-pre}, we will start our investigation in Section \ref{s-propeff} with proper efficiency for the general vector optimization problem described above. In Section \ref{s-Geoff}, we turn to multicriteria optimization problems and study Geoffrion's proper efficiency. 
It is characterized as efficiency with regard to subclasses of the set of polyhedral cones. Conditions for the existence of Geoffrion's properly efficient points are proved. For closed feasible point sets, Geoffrion's properly efficient point set is empty or coincides with that of Nehse and Iwanow. The results from Lemma \ref{l-DK} to Example \ref{ex-NInotHe-folg}, which refer to the characterization of Geoffrion's proper efficiency by polyhedral cones and by minimizers of functionals, were first given by the author in \cite{Wei90}.

\section{Preliminaries}\label{s-pre}

From now on, $\mathbb{R}$ and $\mathbb{N}$ will denote the set of real numbers and of nonnegative integers, respectively.
We define 
$\mathbb{N}_{>}$ as the set of positive integers,
$\mathbb{R}_{+}:=\{x\in\mathbb{R} \mid x\geq 0\}$, $\mathbb{R}_{>}:=\{x\in\mathbb{R} \mid x > 0\}$,
$\mathbb{R}_{+}^{\ell }:=\{(x_1,\ldots ,x_{\ell })^T\in\mathbb{R}^{\ell }\mid x_i\geq 0 \,\forall i\in\{1,\ldots, \ell \}\}$ for each $\ell\in\mathbb{N}_>$.  $\overline{\mathbb{R}}:=\mathbb{R}\cup \{-\infty ,+\infty\}$ denotes the extended real-valued set. 
Given some set $B\subseteq\mathbb{R}$, $d\in Y$, and $D\subseteq Y$, we write $B d:=\{b \cdot d \mid b\in B\}$ and $B D:=\{b \cdot d \mid b\in B, d\in D\}$.
A set $C$ in $Y$ is said to be a cone iff $\lambda c\in C \mbox{ for all } \lambda\in\mathbb{R}_{+}, c\in C.$ The cone $C$ is called nontrivial iff $C\not=\emptyset$, $C\not=\{0\}$ and $C\not= Y$ hold. It is said to be pointed iff $C\cap (-C)=\{0\} .$ 
Let $A$ be a subset of $Y$.
$0^+A:=\{u\in Y   \mid  A+\mathbb{R}_{+}u\subseteq A\}$ denotes the recession cone of $A$.
$\operatorname*{{core}}A$ stands for the algebraic interior of $A$, $\operatorname*{conv}A$ for the convex hull of $A$. Furthermore, $\operatorname*{cl}A$ and $\operatorname*{bd}A$ denote the closure and the boundary, respectively, of $A$.
Consider a functional $\varphi :Y\to \overline{\mathbb{R}}$ and its effective domain 
$\operatorname*{dom}\varphi :=\{y\in Y \mid \varphi(y)\in\mathbb{R}\cup \{-\infty\}\}$.
$\varphi$ is said to be finite-valued on $A$ iff it attains only real values on $A$. It is called finite-valued iff it is finite-valued on $Y$. According to the rules of convex analysis, $\operatorname*{inf}\emptyset =+\infty .$ Moreover, the following functional turns out to be essential for characterizing solutions in vector optimization. 

\begin{definition}
Assume $A\subseteq Y$ and $k\in Y\setminus\{0\}$.\\ 
The Gerstewitz functional $\varphi _{A,k}:Y\rightarrow \overline{{\mathbb{R}}}$ is defined by
\begin{equation*}
\varphi_{A,k} (y):= \inf \{t\in
{\mathbb{R}} \mid y\in A+tk\}. 
\end{equation*}
\end{definition}

For properties and bibliographical notes related to this functional, see \cite{Wei17a} and \cite{Wei17b}.

Functionals which are applied for scalarization in vector optimization have to fulfill certain monotonicity conditions.
\begin{definition}
Suppose $B\subseteq Y$  and $\varphi: Y \to \overline{\mathbb{R}}$, $M\subseteq \operatorname*{dom}\varphi$. \\
$\varphi$ is said to be
\begin{itemize}
\item[\rm (a)]
$B$-monotone on $M$
iff $y^1,y^2 \in M$ and $y^{2}-y^{1}\in B$ imply $\varphi
(y^{1})\leq \varphi (y^{2})$,
\item[\rm (b)] strictly $B$-monotone on $M$ 
iff $y^1,y^2 \in M$ and $y^{2}-y^{1}\in B\setminus
\{0\}$ imply $\varphi (y^{1})<\varphi (y^{2})$.
\end{itemize}
$\varphi$ is said to be $B$-monotone or strictly $B$-monotone iff it is $B$-monotone or strictly $B$-monotone, respectively, on $\operatorname*{dom}\varphi$.
\end{definition}

\section{Subsets of the Efficient Point Set and Proper Efficiency}\label{s-propeff}

From now on, we assume $F\subseteq Y$ and that $D$ is a proper subset of $Y$ with $D\setminus\{0\}\not=\emptyset$.

The properties of efficient point sets yield two general concepts of proper efficiency.
Since each element of $F$ in which some strictly $D$--monotone functional attains its minimum on $F$ is an efficient element of $F$ w.r.t. $D$, such minimal solutions are appropriate for defining proper efficiency. 

\begin{definition}
Suppose that $\Phi $ is a nonempty subset of the set of functions $\varphi :F\to\mathbb{R}$ which are strictly $D$-monotone on $F$.
$y^0\in F$ is said to be a $\Phi $--properly efficient element of $F$ w.r.t. $D$ iff there exists a function $\varphi \in \Phi $ that attains its minimum on $F$ in $y^0$.
\end{definition}

A special case contained in this definition is proper efficiency according to Bitran and Magnanti \cite{bima79}. They defined, for non-trivial convex cones $D\subset Y$, that 
$y^0\in F$ is a properly efficient point of $F$ w.r.t. $D$ if there exists some linear continuous strictly $D$-monotone function $\varphi :Y\to\mathbb{R}$ that attains its minimum on $F$ in $y^0$. If $Y$ is the Euclidean space and $D$ the non-negative orthant in this space, the proper efficiency by Bitran and Magnanti coincides with the proper efficiency by Sch\"onfeld \cite{scho70}.

Another general concept for proper efficiency is given in the following way (\cite{Wei90},\cite{GerWei90}):

\begin{definition}
Suppose $Z$ to be a nonempty subset of the family of sets $H\subseteq Y$ with the property $H\supseteq D\setminus \{ 0 \} $.
$y^0\in F$ is said to be a $Z$--properly efficient element of $F$ w.r.t. $D$ iff there exists some set $H\in Z$ with $y^0\in \operatorname*{Min}(F,H)$.
\end{definition}

Because of $H\supseteq D\setminus \{ 0 \}$, each of these properly efficient points is an efficient element of $F$ w.r.t. $D$.
If $H$ is open for each $H\in Z$, then $\operatorname*{Min}(F,H)=\operatorname*{WMin}(F,H)$. We have already pointed out the advantage of dealing with sets of weakly efficient elements.

We get for
\[ Z=Z_{\operatorname*{He}}:=\{ H\subseteq Y \mid H \mbox{ convex cone},\,
D\setminus \{ 0 \}\subseteq\operatorname*{int}H\}, \]
the notion of proper efficiency by Henig \cite{Hen82};
for
\[ Z=Z_{\operatorname*{NI}}:=\{ \operatorname*{int}H\mid H\subseteq Y \mbox{ closed convex set},\, 0\in bd\, H,\,
H+(D\setminus \{ 0 \} ) \subseteq \operatorname*{int}H \}, \]
we get the notion of proper efficiency by Nehse und Iwanow (\cite{iwne84},\cite{geiw85}).

We have to mention that all authors defined their proper efficiency notions under more restrictive assumptions to the space $Y$ and to the domination sets $D$. The following notions of proper efficiency in (a) and (c) were originally defined in $Y=\mathbb{R}^{\ell }$ w.r.t. a non-trivial convex cone $D$ that had to be closed in Benson's definition, which was given in \cite{bens79}.

\begin{definition}\label{d-Henig}
\begin{itemize}
\item[]
\item[\rm (a)] $y^0\in F$ is said to be a properly efficient element of $F$ w.r.t. $D$ according to Henig iff $y^0\in \operatorname*{Min}(F,H)$ for some convex cone $H\subseteq Y$ with $D\setminus\{0\}\subseteq\operatorname*{int}H$. We will denote the set of these points by $\operatorname*{He-PMin}(F,D)$.
\item[\rm (b)] $y^0\in F$ is said to be a properly efficient element of $F$ w.r.t. $D$ according to Nehse and Iwanow iff $y^0\in \operatorname*{WMin}(F,H)$ for some closed convex set $H\subseteq Y$ with $0\in\operatorname{bd}H$ and $H+(D\setminus\{0\})\subseteq\operatorname*{int}H$. We will denote the set of these points by $\operatorname*{NI-PMin}(F,D)$.
\item[\rm (c)] If $0\in D$, then $y^0\in F$ is said to be a properly efficient element of $F$ w.r.t. $D$ according to Benson iff $\operatorname*{cl}\operatorname*{cone}(F+D-y^0)\cap (-D)=\{0\}$. We will denote the set of these points by $\operatorname*{Be-PMin}(F,D)$.
\end{itemize}
\end{definition}

\begin{remark}
The condition $0\in\operatorname{bd}H$ in part (b) of the definition was added by Z{\u{a}}linescu \cite{zali87} who proved that, without this condition, the properly efficient point set of each set $F$ would be empty or $F$.
\end{remark}

Note that the assumption $0\in D$ in (c) guarantees efficiency of the points in $\operatorname*{Be-PMin}(F,D)$ \cite{Wei83}.

We get immediately from the properties of convex sets:
\begin{lemma}\label{l-NI-open}
$y^0\in \operatorname*{NI-PMin}(F,D)$ holds
if and only if there exists some open convex set $H\subseteq Y$
with $0\in \operatorname*{bd}H$, $\operatorname*{cl}H+(D\setminus \{ 0 \})\subseteq H$
and $y^0\in \operatorname*{Min}(F,H)$.
\end{lemma}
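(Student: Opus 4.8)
The plan is to prove both directions of the equivalence, exploiting the standard interplay between a closed convex set, its interior, and its closure. Recall that the definition of $\operatorname*{NI-PMin}(F,D)$ asks for a \emph{closed} convex set $\widetilde H$ with $0\in\operatorname*{bd}\widetilde H$ and $\widetilde H+(D\setminus\{0\})\subseteq\operatorname*{int}\widetilde H$ such that $y^0\in\operatorname*{WMin}(F,\widetilde H)=\operatorname*{Min}(F,\operatorname*{int}\widetilde H)$. The claim is that one may equivalently work with an \emph{open} convex set $H$ satisfying $0\in\operatorname*{bd}H$, $\operatorname*{cl}H+(D\setminus\{0\})\subseteq H$, and $y^0\in\operatorname*{Min}(F,H)$.

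For the ``only if'' direction, suppose such a closed convex $\widetilde H$ is given. Since $0\in\operatorname*{bd}\widetilde H$, in particular $\widetilde H\neq\emptyset$ and $\widetilde H\neq Y$; the inclusion $\widetilde H+(D\setminus\{0\})\subseteq\operatorname*{int}\widetilde H$ forces $\operatorname*{int}\widetilde H\neq\emptyset$ (using $D\setminus\{0\}\neq\emptyset$), so $\widetilde H$ is a convex set with nonempty interior. I would then set $H:=\operatorname*{int}\widetilde H$, which is open and convex, and invoke the standard facts for a convex set with nonempty interior: $\operatorname*{cl}(\operatorname*{int}\widetilde H)=\operatorname*{cl}\widetilde H=\widetilde H$ (the last equality because $\widetilde H$ is closed), and $\operatorname*{bd}H=\operatorname*{bd}\widetilde H\ni 0$. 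From $\widetilde H+(D\setminus\{0\})\subseteq\operatorname*{int}\widetilde H=H$ and $\operatorname*{cl}H=\widetilde H$ we get $\operatorname*{cl}H+(D\setminus\{0\})\subseteq H$. Finally $\operatorname*{Min}(F,H)=\operatorname*{Min}(F,\operatorname*{int}\widetilde H)=\operatorname*{WMin}(F,\widetilde H)\ni y^0$, which is exactly what is required.

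For the ``if'' direction, suppose an open convex $H$ with the stated properties is given. I would set $\widetilde H:=\operatorname*{cl}H$, which is closed and convex, with $0\in\operatorname*{bd}H=\operatorname*{bd}\widetilde H$ (again using that $H$ is open convex and nonempty, the latter since $0\in\operatorname*{bd}H$ would be vacuous otherwise — more precisely nonemptiness follows from $\operatorname*{cl}H+(D\setminus\{0\})\subseteq H$ together with $D\setminus\{0\}\neq\emptyset$, or directly from $0\in\operatorname*{bd}H$). Since $H$ is open convex and nonempty, $\operatorname*{int}(\operatorname*{cl}H)=H$, so $\operatorname*{int}\widetilde H=H$. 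Then $\widetilde H+(D\setminus\{0\})=\operatorname*{cl}H+(D\setminus\{0\})\subseteq H=\operatorname*{int}\widetilde H$, and $\operatorname*{WMin}(F,\widetilde H)=\operatorname*{Min}(F,\operatorname*{int}\widetilde H)=\operatorname*{Min}(F,H)\ni y^0$; hence $y^0\in\operatorname*{NI-PMin}(F,D)$.

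The only genuinely nontrivial ingredient is the pair of topological identities for convex sets with nonempty interior in a topological vector space, namely $\operatorname*{cl}(\operatorname*{int}C)=\operatorname*{cl}C$, $\operatorname*{int}(\operatorname*{cl}C)=\operatorname*{int}C$, and the agreement of boundaries; these are classical and are presumably what the phrase ``from the properties of convex sets'' in the lemma's preamble refers to. The main point to be careful about is verifying that $\operatorname*{int}\widetilde H$ is nonempty before applying these identities — this is where the hypotheses $0\in\operatorname*{bd}\widetilde H$ (so $\widetilde H\neq\emptyset$) and $D\setminus\{0\}\neq\emptyset$ together with $\widetilde H+(D\setminus\{0\})\subseteq\operatorname*{int}\widetilde H$ do the work — after which the rest is a routine rewriting of the defining conditions.
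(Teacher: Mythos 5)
Your proof is correct and is precisely the argument the paper leaves implicit behind ``We get immediately from the properties of convex sets'': passing between a closed convex set and its interior via $\operatorname*{cl}(\operatorname*{int}C)=\operatorname*{cl}C$, $\operatorname*{int}(\operatorname*{cl}C)=\operatorname*{int}C$ and the coincidence of boundaries, after checking the interior is nonempty. (Only a trivial aside: in the ``if'' direction nonemptiness of $H$ follows from $0\in\operatorname*{bd}H$ alone, not from the inclusion $\operatorname*{cl}H+(D\setminus\{0\})\subseteq H$, which is vacuous for $H=\emptyset$ — but you note the correct route as well.)
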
 

Z{\u{a}}linescu \cite[Theorem 6]{zali87} proved:
\begin{lemma}
Suppose $Y=\mathbb{R}^{\ell }$ and $D$ to be a closed convex cone with $\operatorname*{int}D\neq \emptyset $.
Then $y^0\in \operatorname*{NI-PMin}(F,D)$ is equivalent to the existence of some closed convex set $H\subseteq \mathbb{R}^{\ell }$ with $D\setminus \{ 0 \}\subseteq \operatorname*{int}H$
and $y^0\in \operatorname*{WMin}(F,H)$.
\end{lemma}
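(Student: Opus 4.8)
The plan is to prove the two implications of the equivalence separately; one is essentially trivial and the other carries the whole content.

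\smallskip
\noindent\textit{The easy implication.} Suppose $y^{0}\in\operatorname*{NI-PMin}(F,D)$ and let $H\subseteq Y$ be closed and convex with $0\in\operatorname*{bd}H$, $H+(D\setminus\{0\})\subseteq\operatorname*{int}H$ and $y^{0}\in\operatorname*{WMin}(F,H)$. Since $H$ is closed, $0\in\operatorname*{bd}H\subseteq H$, so $D\setminus\{0\}=\{0\}+(D\setminus\{0\})\subseteq H+(D\setminus\{0\})\subseteq\operatorname*{int}H$; hence the very same $H$ witnesses the right-hand side. (This direction uses neither $Y=\mathbb{R}^{\ell}$ nor $\operatorname*{int}D\neq\emptyset$, and it is the only part of the proof that requires no work.)

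\smallskip
\noindent\textit{The converse: reduction.} Assume $H\subseteq\mathbb{R}^{\ell}$ is closed and convex with $D\setminus\{0\}\subseteq\operatorname*{int}H$ and $y^{0}\in\operatorname*{WMin}(F,H)$. I want to produce a closed convex $H'$ with $0\in\operatorname*{bd}H'$, $H'+(D\setminus\{0\})\subseteq\operatorname*{int}H'$, and the extra inclusion $\operatorname*{int}H'\subseteq\operatorname*{int}H$; the last property forces $(y^{0}-\operatorname*{int}H')\cap F\subseteq(y^{0}-\operatorname*{int}H)\cap F\subseteq\{y^{0}\}$, so $y^{0}\in\operatorname*{WMin}(F,H')$ and thus $y^{0}\in\operatorname*{NI-PMin}(F,D)$ by Lemma~\ref{l-NI-open}. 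Two preliminary facts will be used freely: $0\in H$ (because $0\in\operatorname*{cl}(D\setminus\{0\})\subseteq\operatorname*{cl}\operatorname*{int}H=H$), and $D\subseteq 0^{+}H$ (for $d\in D\setminus\{0\}$ the ray $\mathbb{R}_{>}d\subseteq D\setminus\{0\}\subseteq H$ lies in the closed convex set $H$, and passing to the limit in the convex combinations $(1-\tfrac{\lambda}{s})x+\tfrac{\lambda}{s}(sd)\in H$ as $s\to\infty$ yields $x+\lambda d\in H$ for all $x\in H$, $\lambda\geq 0$).

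\smallskip
\noindent\textit{The construction of $H'$.} Here finite-dimensionality and, implicitly, pointedness of $D$ enter: if $D$ contained a line then every admissible NI-set would have empty boundary (the midpoint of $h\pm v\in\operatorname*{int}H'$ would be an interior point on $\operatorname*{bd}H'$), so $\operatorname*{NI-PMin}(F,D)$ would be empty while the right-hand condition need not be; thus $D$ is a pointed closed convex cone with $\operatorname*{int}D\neq\emptyset$, and there is $b\in\mathbb{R}^{\ell}$ with $\langle b,d\rangle>0$ for all $d\in D\setminus\{0\}$. The candidate is a ``rounded dilation'' of $D$ squeezed inside $H$: take $H':=\{x\in\mathbb{R}^{\ell}:g(x)\leq 0\}$ with $g(x):=\inf_{d\in D}\bigl(\rho(x-d)-\langle b,d\rangle\bigr)$ for a suitable gauge $\rho$ comparable to the norm and strictly below it off the origin, and with $\|b\|$ small. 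Then $g$ is convex (an infimal projection over the convex set $D$ of a function jointly convex in $(x,d)$) and real-valued; one reads off $D\subseteq\{g\leq 0\}$ and $D\setminus\{0\}\subseteq\{g<0\}=\operatorname*{int}H'$ (from $g(d_{0})\leq\rho(0)-\langle b,d_{0}\rangle=-\langle b,d_{0}\rangle<0$), and $0\in\operatorname*{bd}H'$ (since $g(0)=0$, attained only at $d=0$); and the decisive monotonicity holds because substituting $d+d_{0}$ for $d$ gives $g(x+d_{0})\leq g(x)-\langle b,d_{0}\rangle<0$ whenever $g(x)\leq 0$ and $d_{0}\in D\setminus\{0\}$, i.e. $H'+(D\setminus\{0\})\subseteq\operatorname*{int}H'$.

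\smallskip
\noindent\textit{Where the real work is.} The one remaining requirement, $\operatorname*{int}H'\subseteq\operatorname*{int}H$, amounts to $d+\langle b,d\rangle\,\{\rho<1\}\subseteq\operatorname*{int}H$ for every $d\in D\setminus\{0\}$, and choosing $\rho$ so that this holds is the main obstacle. With the naive choice $\rho=\|\cdot\|$ this is just $\langle b,d\rangle\leq\operatorname{dist}(d,\operatorname*{bd}H)$ for all $d\in D\setminus\{0\}$, which — by concavity of $x\mapsto\operatorname{dist}(x,\mathbb{R}^{\ell}\setminus\operatorname*{int}H)$ and a compactness argument on $D\cap S^{\ell-1}$ — is automatic as soon as $\operatorname{dist}(\cdot,\operatorname*{bd}H)$ grows at least linearly along every ray of $D$; but in general $\operatorname*{bd}H$ may run asymptotically parallel to an extreme ray of $D$ (for instance $\ell=2$, $D=\{0\leq x_{2}\leq x_{1}\}$, $H=\{x_{1}\geq 0,\ x_{2}\geq e^{-x_{1}}-1\}$, in which case $H$ itself already violates $H+(D\setminus\{0\})\subseteq\operatorname*{int}H$), and then $\rho$ must genuinely be built from the geometry of $H$, e.g. out of the concave function $x\mapsto\operatorname{dist}(x,\mathbb{R}^{\ell}\setminus\operatorname*{int}H)$; in the displayed example one may simply take $H'=\{x_{1}\geq 0,\ x_{2}\geq e^{-x_{1}/2}-1\}$, and the general construction mimics this. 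Once an admissible $\rho$, hence $H'$, is in hand, the verification of all three properties of $H'$ is routine from convexity and $D\subseteq 0^{+}H$, and Lemma~\ref{l-NI-open} finishes the proof.
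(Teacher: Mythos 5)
The paper offers no proof of this lemma at all --- it is quoted verbatim from Z\u{a}linescu \cite[Theorem 6]{zali87} --- so your attempt has to stand on its own, and it does not: the gap sits exactly where you yourself locate ``the real work.'' The easy direction and the formal properties of $g(x)=\inf_{d\in D}\bigl(\rho(x-d)-\langle b,d\rangle\bigr)$ (convexity, finiteness for $\|b\|$ small, $0\in\operatorname*{bd}H'$, $H'+(D\setminus\{0\})\subseteq\operatorname*{int}H'$) are fine, but the decisive inclusion $\operatorname*{int}H'\subseteq\operatorname*{int}H$ is never established: you show that the naive choice $\rho=\|\cdot\|$ can fail, and the proposed remedy --- building $\rho$ ``out of the geometry of $H$,'' e.g.\ from $x\mapsto\operatorname{dist}(x,\mathbb{R}^{\ell}\setminus\operatorname*{int}H)$ --- is not carried out and is not a routine patch. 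Once the ``radius'' is allowed to depend on the base point rather than only on the difference $x-d$, $g$ is no longer an infimal convolution of a fixed gauge, so the convexity of $g$, the identity $\operatorname*{int}H'=\{g<0\}$, the equality $g(0)=0$, and above all the substitution argument $d\mapsto d+d_{0}$ that gave $H'+(D\setminus\{0\})\subseteq\operatorname*{int}H'$ all have to be re-proved; the two-dimensional exponential example shows why $H'$ cannot in general be conical and hence must be manufactured from $H$ itself, so this missing construction is the entire content of the nontrivial implication, and ``the general construction mimics this'' is an assertion, not a proof.

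There is also a logical defect in how you obtain pointedness of $D$. You argue that if $D$ contained a line then $\operatorname*{NI-PMin}(F,D)=\emptyset$ while the right-hand condition ``need not be'' empty, and conclude ``thus $D$ is pointed'' --- that infers a property of the data from the truth of the very equivalence you are trying to prove, which is circular. Worse, your observation actually shows that the implication you are proving is false without pointedness: take $D$ a closed halfspace in $\mathbb{R}^{2}$, $F=\{y^{0}\}$ and $H=\mathbb{R}^{2}$; then $D\setminus\{0\}\subseteq\operatorname*{int}H$ and $y^{0}\in\operatorname*{WMin}(F,H)$, yet no closed convex $H'$ with $0\in\operatorname*{bd}H'$ and $H'+(D\setminus\{0\})\subseteq\operatorname*{int}H'$ exists, so $\operatorname*{NI-PMin}(F,D)=\emptyset$. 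Hence pointedness (present in Z\u{a}linescu's setting) must be added as a hypothesis --- it is also exactly what you need for the existence of $b$ with $\langle b,d\rangle>0$ on $D\setminus\{0\}$ --- and should be flagged as such rather than ``derived.''
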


Henig's proper efficiency can also be formulated using weakly efficient elements w.r.t. $H$.

\begin{proposition}\label{l-He-NI}
The following statements are equivalent to each other:
\begin{itemize}
\item[\rm (a)] $y^0\in \operatorname*{He-PMin}(F,D)$.
\item[\rm (b)] $y^0\in\operatorname*{WMin}(F,H)$ for some convex cone $H\subseteq Y$ with $D\setminus\{0\}\subseteq \operatorname*{int}H$.
\item[\rm (c)] $y^0\in\operatorname*{WMin}(F,H)$ for some closed convex cone $H\subseteq Y$ with $D\setminus\{0\}\subseteq \operatorname*{int}H$.
\end{itemize}
Hence, $\operatorname*{He-PMin}(F,D)\subseteq \operatorname*{NI-PMin}(F,D)$.
\end{proposition}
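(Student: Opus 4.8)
The natural route is the cycle of implications $(a)\Rightarrow(c)\Rightarrow(b)\Rightarrow(a)$, from which the inclusion $\operatorname*{He-PMin}(F,D)\subseteq\operatorname*{NI-PMin}(F,D)$ then drops out. Two trivial observations will carry most of the weight: first, $\operatorname*{Min}(F,\cdot)$ reverses inclusions, i.e.\ $B\subseteq A$ forces $\operatorname*{Min}(F,A)\subseteq\operatorname*{Min}(F,B)$, since $y^{0}-B\subseteq y^{0}-A$; second, $\operatorname*{WMin}(F,H)=\operatorname*{Min}(F,\operatorname*{int}H)$ by definition. Apart from these I only need standard facts about a convex subset $C$ of a topological vector space with $\operatorname*{int}C\neq\emptyset$: $\operatorname*{int}(\operatorname*{cl}C)=\operatorname*{int}C$; and, when $C$ is moreover a cone, $\lambda\operatorname*{int}C=\operatorname*{int}C$ for $\lambda>0$, $C+\operatorname*{int}C=\operatorname*{int}C$, and hence $\operatorname*{int}C\cup\{0\}$ is again a convex cone. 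In $(a)$--$(c)$ the cone $H$ in question automatically satisfies $\operatorname*{int}H\neq\emptyset$, because $D\setminus\{0\}\neq\emptyset$.

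For $(a)\Rightarrow(c)$: given a convex cone $H$ with $D\setminus\{0\}\subseteq\operatorname*{int}H$ and $y^{0}\in\operatorname*{Min}(F,H)$, I would pass to $\tilde H:=\operatorname*{cl}H$, a closed convex cone with $\operatorname*{int}\tilde H=\operatorname*{int}H\supseteq D\setminus\{0\}$. Since $\operatorname*{int}H\subseteq H$, the first observation yields $y^{0}\in\operatorname*{Min}(F,H)\subseteq\operatorname*{Min}(F,\operatorname*{int}\tilde H)=\operatorname*{WMin}(F,\tilde H)$, which is $(c)$. The step $(c)\Rightarrow(b)$ is immediate, a closed convex cone being a convex cone. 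For $(b)\Rightarrow(a)$: with $H$ a convex cone, $D\setminus\{0\}\subseteq\operatorname*{int}H$ and $y^{0}\in\operatorname*{WMin}(F,H)=\operatorname*{Min}(F,\operatorname*{int}H)$, I would set $H_{0}:=\operatorname*{int}H\cup\{0\}$, which is a convex cone with $D\setminus\{0\}\subseteq\operatorname*{int}H\subseteq\operatorname*{int}H_{0}$. Because $y^{0}-H_{0}=(y^{0}-\operatorname*{int}H)\cup\{y^{0}\}$, one gets $F\cap(y^{0}-H_{0})=\bigl(F\cap(y^{0}-\operatorname*{int}H)\bigr)\cup\bigl(F\cap\{y^{0}\}\bigr)\subseteq\{y^{0}\}$, so $y^{0}\in\operatorname*{Min}(F,H_{0})$ and thus $y^{0}\in\operatorname*{He-PMin}(F,D)$.

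Finally, for the inclusion I would reuse the closed convex cone $\tilde H$ constructed in $(a)\Rightarrow(c)$ and verify that it is already an admissible Nehse--Iwanow set: it is closed and convex; $0\in\operatorname*{bd}\tilde H$ provided $\tilde H\neq Y$ (since $0\in\tilde H$ while $0\in\operatorname*{int}\tilde H$ would force $\tilde H=Y$, which happens only in the degenerate case $F=\{y^{0}\}$); and $\tilde H+(D\setminus\{0\})\subseteq\tilde H+\operatorname*{int}\tilde H=\operatorname*{int}\tilde H$. As $y^{0}\in\operatorname*{WMin}(F,\tilde H)$ by $(c)$, this gives $y^{0}\in\operatorname*{NI-PMin}(F,D)$. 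I expect the only real obstacle to be the careful bookkeeping of the topological properties of $\tilde H$ and $H_{0}$ in an arbitrary topological vector space — in particular the identity $\operatorname*{int}(\operatorname*{cl}H)=\operatorname*{int}H$ and the location $0\in\operatorname*{bd}\tilde H$ — while everything else is formal manipulation of the defining set inclusions.
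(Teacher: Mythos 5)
Your proposal is correct and follows essentially the same route as the paper: pass between $\operatorname*{Min}$ and $\operatorname*{WMin}$ via the inclusion-reversal of $\operatorname*{Min}(F,\cdot)$, use $\operatorname*{int}\operatorname*{cl}H=\operatorname*{int}H$ for the closed-cone version, and recover (a) from (b) with the convex cone $\operatorname*{int}H\cup\{0\}$, exactly as in the paper's proof. The only difference is organizational (a cycle of implications instead of two equivalences) plus your explicit verification that $\operatorname*{cl}H$ is an admissible Nehse--Iwanow set, which the paper leaves implicit in its ``Hence''; your flagging of the degenerate case $\operatorname*{cl}H=Y$ is a point the paper does not address either.
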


\begin{proof}
(a) implies (b) because of $\operatorname*{Min}(F,H)\subseteq \operatorname*{WMin}(F,H)$.\\
If $H\in Z_{\operatorname*{He}}$, then $H^0:=\operatorname*{int}H\cup\{0\}\in Z_{\operatorname*{He}}$ and $\operatorname*{WMin}(F,H)=\operatorname*{Min}(F,H^0)$. Hence, (a) holds.\\
The equivalence between (b) and (c) results from ${\operatorname*{int}\operatorname*{cl}H=\operatorname*{int}H}$ and \linebreak
$\operatorname*{WMin}(F,H)=\operatorname*{WMin}(F,\operatorname*{cl}H)$ for convex sets $H$ with nonempty interior.
\end{proof}

In general, $\operatorname*{He-PMin}(F,D)\not= \operatorname*{NI-PMin}(F,D)$. This is pointed out by the following example from \cite[Remark 2]{iwne84}.

\begin{example}\label{ex-NInotHe}
Consider, in $Y=\mathbb{R}^2$, the set $F:=\{ (y_1,y_2)^T \in \mathbb{R}^2\mid y_1<0,\, y_2=\frac{1}{y_1}\}+\mathbb{R}^2_+$. Then $(-1,-1)^T\in\operatorname*{Min}(F,\mathbb{R}^2_+)$ and $(-1,-1)^T\in\operatorname*{WMin}(F,H)$ for $H:=- (Y\setminus \operatorname*{int}F)-(1,1)^T$. $H\in Z_{\operatorname*{NI}}$ for $D=\mathbb{R}^2_+$, hence $y^0\in \operatorname*{NI-PMin}(F,\mathbb{R}^2_+)$, but $\operatorname*{He-PMin}(F,\mathbb{R}^2_+)=\emptyset$. 
\end{example} 

Obviously, we have:

\begin{lemma}\label{l-Benson-AF}
Suppose $0\in D$ and $D+D\subseteq D$. Then
$\operatorname*{Be-PMin}(A,D)=\operatorname*{Be-PMin}(F,D)$ for each set $A\subseteq Y$ with $F\subseteq A\subseteq F+D$.
\end{lemma}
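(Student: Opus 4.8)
The plan is to show that the defining condition for Benson proper efficiency, namely $\operatorname*{cl}\operatorname*{cone}(A+D-y^0)\cap(-D)=\{0\}$, depends on $A$ only through the set $A+D$, and that under $0\in D$ and $D+D\subseteq D$ the set $A+D$ is the same for every $A$ sandwiched between $F$ and $F+D$. The key algebraic observation is that if $F\subseteq A\subseteq F+D$, then adding $D$ to all three sets gives $F+D\subseteq A+D\subseteq F+D+D\subseteq F+D$, where the last inclusion uses $D+D\subseteq D$; hence $A+D=F+D$. (Here $0\in D$ is what makes $F\subseteq F+D$ and $A\subseteq A+D$, so that the chain of inclusions is genuinely about the same objects.)

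First I would record that $\operatorname*{cone}(A+D-y^0)=\operatorname*{cone}((A+D)-y^0)$, and that by the displayed equality $A+D=F+D$ we get $(A+D)-y^0=(F+D)-y^0$ as sets in $Y$. Consequently $\operatorname*{cone}(A+D-y^0)=\operatorname*{cone}(F+D-y^0)$, and applying $\operatorname*{cl}$ and intersecting with $-D$ preserves this equality. Therefore the Benson condition holds for $A$ at $y^0$ if and only if it holds for $F$ at $y^0$. Finally I would note that $y^0$ must lie in the feasible set for the notion to apply: since $0\in D$, membership $y^0\in A$ and $y^0\in F$ must be reconciled — but in fact the statement is about the properly efficient point sets, so one checks that $\operatorname*{Be-PMin}$ is by definition a subset of the relevant ground set; given $F\subseteq A$, an element $y^0\in\operatorname*{Be-PMin}(F,D)$ lies in $F\subseteq A$, and conversely if $y^0\in\operatorname*{Be-PMin}(A,D)$ then $y^0\in A\subseteq F+D$, and combined with the Benson condition forcing $y^0-d\in\operatorname*{Min}$-type behaviour one deduces $y^0\in F$; this last point is the only place any real argument is needed.

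The main obstacle I expect is precisely this last reconciliation of ground sets: showing that a Benson properly efficient point of $A$ actually belongs to $F$, not merely to $A\subseteq F+D$. I would handle it by writing $y^0=f^0+d^0$ with $f^0\in F$, $d^0\in D$, observing $f^0-y^0=-d^0\in -D$, and that $f^0-y^0\in \operatorname*{cone}(F+D-y^0)\subseteq\operatorname*{cl}\operatorname*{cone}(A+D-y^0)$ because $F+D=A+D$; the Benson condition then forces $-d^0=0$, i.e. $d^0=0$ (using $0\in D$), whence $y^0=f^0\in F$. The reverse containment, that a Benson properly efficient point of $F$ is one of $A$, is immediate from $F\subseteq A$ together with the set equality $A+D=F+D$, since both the membership $y^0\in A$ and the Benson condition transfer directly. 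This completes the proof modulo the routine set manipulations sketched above.
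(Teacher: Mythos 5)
Your proof is correct, and it follows the natural route the paper itself treats as self-evident (the lemma is stated after ``Obviously, we have:'' with no proof given): the identity $A+D=F+D$ from $D+D\subseteq D$, plus the check that a Benson point of $A$ must in fact lie in $F$, which you handle properly by writing $y^0=f^0+d^0$ and using the Benson condition to force $d^0=0$. Nothing further is needed.
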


\begin{proposition}\label{p-Henig-dense}
Suppose $Y=\mathbb{R}^{\ell }$ and $D$ to be a non-trivial closed pointed convex cone. Then 
\begin{itemize}
\item[\rm (a)] $\operatorname*{He-PMin}(F,D)=\operatorname*{Be-PMin}(F,D)$.
\item[\rm (b)] If $\;\operatorname*{He-PMin}(F,D)\not=\emptyset$ and $A$ is closed for some set $A\subseteq Y$ with $F\subseteq A\subseteq F+D$, then $\operatorname*{He-PMin}(F,D)$ is dense in $\operatorname*{Min}(F,D)$.
\end{itemize}
\end{proposition}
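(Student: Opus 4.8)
The plan is to prove (a) first and then derive (b) from it together with known density results for Benson's proper efficiency.

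For part (a), I would argue both inclusions. The inclusion $\operatorname*{He-PMin}(F,D)\subseteq\operatorname*{Be-PMin}(F,D)$ should follow directly from the definitions: if $y^0\in\operatorname*{Min}(F,H)$ for a convex cone $H$ with $D\setminus\{0\}\subseteq\operatorname*{int}H$, then $H$ is pointed (since $D$ is non-trivial, $\operatorname*{int}H\neq\emptyset$ forces $H\neq Y$, and one checks $H\cap(-H)=\{0\}$ using $0\notin\operatorname*{int}H$), and one shows $\operatorname*{cl}\operatorname*{cone}(F+D-y^0)\subseteq H$, so intersecting with $-D\subseteq -H$ gives $\{0\}$; here I would use that $y^0\in\operatorname*{Min}(F,H)$ implies $(F-y^0)\cap(-H)=\{0\}$, hence $(F+D-y^0)\cap(-\operatorname*{int}H)=\emptyset$ because $D\setminus\{0\}\subseteq\operatorname*{int}H$ and $H+\operatorname*{int}H\subseteq\operatorname*{int}H$, then pass to the closed conical hull and use that $-\operatorname*{int}H$ is open. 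The reverse inclusion $\operatorname*{Be-PMin}(F,D)\subseteq\operatorname*{He-PMin}(F,D)$ is the substantive direction: given $y^0\in\operatorname*{Be-PMin}(F,D)$, the set $K:=\operatorname*{cl}\operatorname*{cone}(F+D-y^0)$ is a closed convex cone with $K\cap(-D)=\{0\}$, and I would separate $K$ from $-(D\cap\mathbb{S}^{\ell-1})$ (a compact set not meeting the closed cone $K$, using pointedness of $D$) to produce a convex cone $H$ — e.g. a suitable "ice-cream"-type enlargement of the dual construction, or more directly the cone $H$ whose interior is a convex open cone containing $D\setminus\{0\}$ and disjoint from $-(K\setminus\{0\})$ — such that $D\setminus\{0\}\subseteq\operatorname*{int}H$ and $(F-y^0)\cap(-\operatorname*{int}H)=\emptyset$, i.e. $y^0\in\operatorname*{WMin}(F,H)$; Proposition \ref{l-He-NI} then gives $y^0\in\operatorname*{He-PMin}(F,D)$.

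For part (b), I would combine part (a) with the known fact (from the literature on Benson proper efficiency, e.g. Borwein/Benson, applicable since $Y=\mathbb{R}^\ell$ and $D$ is closed pointed convex with a closed set $A$ between $F$ and $F+D$) that $\operatorname*{Be-PMin}(A,D)$ is dense in $\operatorname*{Min}(A,D)$ whenever it is nonempty and $A$ is closed. By Lemma \ref{l-Benson-AF} (noting $0\in D$ and $D+D\subseteq D$ since $D$ is a convex cone containing $0$) we have $\operatorname*{Be-PMin}(A,D)=\operatorname*{Be-PMin}(F,D)=\operatorname*{He-PMin}(F,D)$, which is nonempty by hypothesis, so this set is dense in $\operatorname*{Min}(A,D)$. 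It then remains to identify $\operatorname*{Min}(A,D)$ with $\operatorname*{Min}(F,D)$: the inclusion $F\subseteq A\subseteq F+D$ together with $0\in D$, $D+D\subseteq D$ gives $\operatorname*{Min}(A,D)\cap F=\operatorname*{Min}(F,D)$ and in fact $\operatorname*{Min}(A,D)=\operatorname*{Min}(F,D)$ (a routine check: any $D$-minimal point of $A$ lies in $F$, and conversely), so density of $\operatorname*{He-PMin}(F,D)$ in $\operatorname*{Min}(F,D)$ follows.

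I expect the main obstacle to be the reverse inclusion in (a): constructing, from the condition $K\cap(-D)=\{0\}$ alone, a convex cone $H$ that simultaneously has $D\setminus\{0\}$ in its interior and leaves $y^0$ weakly efficient. The delicate point is that $K$ need not have nonempty interior and $D$ need only be pointed (not necessarily with nonempty interior), so the separation must be carried out carefully — one natural route is to separate the compact convex set $\operatorname*{conv}(-(D\cap\mathbb{S}^{\ell-1}))$ from the closed convex cone $K$ by finitely many (or one) hyperplanes and take $H$ to be the intersection of the corresponding closed half-spaces, then verify the interior condition. I would also need the auxiliary fact that $y^0\in\operatorname*{Be-PMin}(F,D)$ already implies $y^0\in\operatorname*{Min}(F,D)$ (so that weak efficiency w.r.t. $H$ can be upgraded appropriately), which is cited from \cite{Wei83}.
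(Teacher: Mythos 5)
The critical step of your proposal for part (a), the inclusion $\operatorname*{Be-PMin}(F,D)\subseteq\operatorname*{He-PMin}(F,D)$, contains a genuine error: you treat $K:=\operatorname*{cl}\operatorname*{cone}(F+D-y^0)$ as a closed \emph{convex} cone and propose to separate it from the compact convex set $\operatorname*{conv}\bigl(-(D\cap S)\bigr)$ ($S$ the unit sphere) by one or finitely many hyperplanes. Since $F$ is an arbitrary nonempty set, $K$ is in general not convex, so no such separation is available; worse, if that argument went through it would produce a linear functional $w$ with $w^Td>0$ for all $d\in D\setminus\{0\}$ and $w^T(y-y^0)\geq 0$ for all $y\in F$, i.e.\ every Benson properly efficient point would be supported by a strictly $D$--monotone linear functional. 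That is false for nonconvex $F$: take $\ell=2$, $D=\mathbb{R}^2_+$, $F=\{(0,0)^T,(-2,1)^T,(1,-1)^T\}$; then $(0,0)^T$ is Benson properly efficient (no ray through $F+\mathbb{R}^2_+$, nor any limit of such rays, meets $-\mathbb{R}^2_+\setminus\{0\}$), but $-2w_1+w_2\geq 0$ and $w_1-w_2\geq 0$ force $w_1\leq 0$. The correct mechanism is the dilating-cone argument, which needs no convexity of $K$: for $\varepsilon>0$ let $D_\varepsilon$ be the closed convex cone generated by $\{d+\varepsilon b\mid d\in D,\ \|d\|=1,\ \|b\|\leq 1\}$; then $D\setminus\{0\}\subseteq\operatorname*{int}D_\varepsilon$, and if $K\cap(-D_{\varepsilon})\neq\{0\}$ for every $\varepsilon>0$, normalizing and passing to a convergent subsequence yields a unit vector in $K\cap(-D)$ (closedness of $K$ and pointedness of $D$ are exactly what is used), a contradiction; taking $H:=D_\varepsilon$ for small $\varepsilon$ and using $F-y^0\subseteq K$ gives $y^0\in\operatorname*{Min}(F,H)$, hence $y^0\in\operatorname*{He-PMin}(F,D)$. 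Your ``ice-cream--type enlargement'' aside points to exactly this construction, but it must not be routed through hyperplane separation.

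Two further remarks. In the forward inclusion your side claims that $H$ is pointed and that $\operatorname*{cl}\operatorname*{cone}(F+D-y^0)\subseteq H$ are both false in general (a closed halfspace with $D\setminus\{0\}$ in its interior is an admissible $H$), but they are not needed: the disjointness argument you then give, namely $(F+D-y^0)\cap(-\operatorname*{int}H)=\emptyset$ via $H+\operatorname*{int}H\subseteq\operatorname*{int}H$, followed by passing to the closed conical hull against the open set $-\operatorname*{int}H$, is the right one. Part (b) of your proposal is sound and in fact mirrors what the paper does for the whole proposition: the paper gives no self-contained proof but cites Henig's Theorems 2.1, 5.1 and 5.2 (for $A=F+B$ with $0\in B\subseteq D$) and obtains the general $A$ with $F\subseteq A\subseteq F+D$ from Lemma \ref{l-Benson-AF}; your transfer via $\operatorname*{Be-PMin}(A,D)=\operatorname*{Be-PMin}(F,D)$ together with the routine identification $\operatorname*{Min}(A,D)=\operatorname*{Min}(F,D)$ (which uses pointedness of $D$) is a correct way to make that extension explicit.
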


Proposition \ref{p-Henig-dense} was proved by Henig \cite[Theorem 2.1, Theorem 5.1, Theorem 5.2]{Hen82} for sets $A=F+B$ with $0\in B\subseteq D$. Its extension results from Lemma \ref{l-Benson-AF}.

In Section 3.2.6 of \cite{GopRiaTamZal:03}, conditions are given under which $\operatorname*{He-PMin}(F,D)$ is dense in the efficient point set if $Y$ is a normed space and $D$ is a closed convex cone. Note that density of $\operatorname*{He-PMin}(F,D)$ in the efficient point set implies density of $\operatorname*{NI-PMin}(F,D)$ in the efficient point set by Proposition \ref{l-He-NI}.

\begin{theorem}\label{hab-t512}
Assume that $H$ is a closed proper subset of $Y$ with $0\in\operatorname*{bd}H$ and with $H+\mathbb{R}_{>}k\subseteq\operatorname*{int}H$for some $k\in Y\setminus\{0\}$.\\
Suppose $y^0\in \operatorname*{WMin}(F,H)$.
\begin{itemize}
\item[\rm (a)] There exists some functional $\varphi: Y\to\overline{\mathbb{R}}$ with 
$$\varphi (y^0)=\operatorname*{min}_{y\in F}\varphi (y)=0$$
which is continuous on $\operatorname*{dom}\varphi$.\\
$\varphi_{y^0-H,k}$ is such a functional. $\operatorname*{int}H=\{y\in Y\mid \varphi _{y^0-H,k}(y^0-y)<0\}$.
\item[\rm (b)] There exists some functional $\varphi: Y\to\overline{\mathbb{R}}$ with 
$$\varphi (y^0-y^0)=\operatorname*{min}_{y\in F}\varphi (y-y^0)=0$$  
which is continuous on $\operatorname*{dom}\varphi$.\\
$\varphi_{-H,k}$ is such a functional. $\operatorname*{int}H=\{y\in Y\mid \varphi _{-H,k}(-y)<0\}$.
\end{itemize}
Moreover, these functionals have the following properties:
\begin{itemize}
\item[\rm (i)]  If $Y=\operatorname*{bd}H+\mathbb{R}k$, then $\varphi_{y^0-H,k}$ and $\varphi_{-H,k}$ are finite-valued. 
\item[\rm (ii)] If $H+D\subseteq H$, then $\varphi_{y^0-H,k}$ and $\varphi_{-H,k}$ are $D$-monotone. 
\item[\rm (iii)] Assume $H+(D\setminus\{0\})\subseteq\operatorname*{core}H$. If $\varphi_{y^0-H,k}$ or $\varphi_{-H,k}$ is finite-valued on $F$, then it is strictly $D$-monotone on $F$. 
\item[\rm (iv)] $\varphi_{y^0-H,k}$ and $\varphi_{-H,k}$ are convex if $H$ is convex.
\item[\rm (v)] $\varphi_{-H,k}$ is sublinear if $H$ is a convex cone.
\end{itemize}
\end{theorem}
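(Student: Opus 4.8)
The plan is to build everything from properties of the Gerstewitz functional $\varphi_{A,k}$, taking $A=y^0-H$ in part (a) and $A=-H$ in part (b); since $-H$ is just the translate $(y^0-H)-y^0$ and $\varphi_{A+c,k}(y+c)=\varphi_{A,k}(y)$, statement (b) follows from (a) by substituting $y\mapsto y-y^0$, so the work concentrates on (a). First I would record the standing hypothesis on $H$ in the form needed: with $A:=y^0-H$ we have $0\in\operatorname*{bd}A$ (from $0\in\operatorname*{bd}H$, i.e. $y^0\in\operatorname*{bd}(y^0-H)$ — careful, actually $0\in\operatorname*{bd}H\iff y^0\in\operatorname*{bd}(y^0-H)$, so one shifts and uses $\varphi_{y^0-H,k}$ evaluated at arguments $y^0-y$) and $A-\mathbb{R}_{>}k\subseteq\operatorname*{int}A$, which is the direction-$k$ ``downward shift into the interior'' condition. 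The cited references \cite{Wei17a}, \cite{Wei17b} give, under exactly these hypotheses, that $\varphi_{A,k}$ is (lower semi)continuous, that $\{y\mid\varphi_{A,k}(y)<0\}=\operatorname*{int}A$ and $\{y\mid\varphi_{A,k}(y)\le 0\}=\operatorname*{cl}A=A$ (as $A$ closed), and that $\varphi_{A,k}$ is continuous on its effective domain; I would invoke these as the backbone.

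For the minimum statement in (a): $y^0\in\operatorname*{WMin}(F,H)$ means $F\cap(y^0-\operatorname*{int}H)=\emptyset$, i.e. for no $y\in F$ does $y^0-y$ lie in $\operatorname*{int}H$, equivalently $y-y^0\notin-\operatorname*{int}H$... the cleanest bookkeeping: $F\cap(y^0-\operatorname*{int}H)=\emptyset$ says that for every $y\in F$, $y\notin y^0-\operatorname*{int}H$, so $y^0-y\notin\operatorname*{int}H$, hence (using the level-set description with $A=y^0-H$, noting $y^0-y\in H\iff y\in y^0-H\iff \varphi_{y^0-H,k}(y^0 - (y^0-y))\le 0$... ) I would rather phrase it directly as: the set identity to be proved, $\operatorname*{int}H=\{y\mid\varphi_{y^0-H,k}(y^0-y)<0\}$, is literally the level-set property of $\varphi_{A,k}$ with $A=y^0-H$ after the substitution $z=y^0-y$, so once that is in hand, $y^0\in\operatorname*{WMin}(F,H)$ translates into $\varphi_{y^0-H,k}(y^0-y)\ge 0$ for all $y\in F$ with equality at $y=y^0$ (since $y^0\in y^0-\operatorname*{bd}H$ gives value $0$). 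Wait — part (a) as written uses $\varphi(y^0)=\min_{y\in F}\varphi(y)=0$ with $\varphi=\varphi_{y^0-H,k}$ evaluated at $y$ directly, not at $y^0-y$; so I should instead argue $\varphi_{y^0-H,k}(y^0)=0$ from $0\in\operatorname*{bd}(y^0-H)$ shifted... this is the one genuinely fiddly point and I'd pin down the exact translate convention from \cite{Wei17a} before writing it. Continuity on $\operatorname*{dom}\varphi$ is then quoted directly.

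The five ``moreover'' properties are each short: (i) $Y=\operatorname*{bd}H+\mathbb{R}k$ is exactly the condition that $\varphi_{A,k}$ is finite-valued (every $y$ is reached by some vertical shift of $A$ by a multiple of $k$), a standard Gerstewitz fact. (ii) $H+D\subseteq H$ gives $A+D\subseteq A$ hence $D$-monotonicity: if $y^2-y^1\in D$ and $y^1\in A+tk$ then $y^2\in A+D+tk\subseteq A+tk$, so $\{t\mid y^2\in A+tk\}\subseteq\{t\mid y^1\in A+tk\}$... actually the inclusion of admissible $t$-sets runs the other way giving $\varphi(y^1)\le\varphi(y^2)$, which is what $D$-monotone means. (iii) Under $H+(D\setminus\{0\})\subseteq\operatorname*{core}H$ one upgrades to strict monotonicity on $F$ provided finiteness on $F$ holds: if $y^2-y^1\in D\setminus\{0\}$ and $\varphi(y^2)=t\in\mathbb{R}$, then $y^2\in\operatorname*{cl}A+tk=A+tk$, so $y^1=y^2-(y^2-y^1)\in A-(D\setminus\{0\})+tk\subseteq (-\operatorname*{core}A\text{-type set})$... more precisely $A-(D\setminus\{0\})\subseteq \operatorname*{core}A$-complement reasoning gives $y^1\in A+tk$ with room to decrease $t$ slightly, yielding $\varphi(y^1)<t$; I'd phrase this via $\varphi(y^1)<\varphi(y^2)$ using that $y^1$ lands in the algebraic interior so some $A+(t-\varepsilon)k$ still contains it. (iv) convexity of $A$ (from convexity of $H$) gives convexity of $\varphi_{A,k}$ — standard. (v) if $H$ is a convex cone then $-H$ is a convex cone, $A=-H$ is a cone, and $\varphi_{\text{cone},k}$ is sublinear — standard. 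The main obstacle is not any single estimate but getting the translation conventions between $H$, $y^0-H$, $-H$ and the arguments $y$ versus $y^0-y$ exactly right so that the asserted identities for $\operatorname*{int}H$ come out verbatim; everything else is a direct citation of Gerstewitz-functional properties from \cite{Wei17a} and \cite{Wei17b}. I would therefore structure the write-up as: (1) fix $A$, verify $A$'s hypotheses, quote the level-set and continuity properties; (2) deduce the $\min=0$ claim and the $\operatorname*{int}H$ identity; (3) obtain (b) from (a) by translation; (4) dispatch (i)--(v) each in one or two lines as above.
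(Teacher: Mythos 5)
Your proposal is correct and takes essentially the same route as the paper, whose proof consists of quoting \cite[Theorem 7]{Wei16b} for the two minimum statements and Theorems 3.1, 2.16 and 2.9 of \cite{Wei17a} for continuity, the level-set identities and the properties (i)--(v) — precisely the package of Gerstewitz-functional facts you invoke, with (b) reduced to (a) via the translation identity $\varphi_{-H,k}(y-y^0)=\varphi_{y^0-H,k}(y)$. The two points you left flagged resolve exactly as you anticipate: $\varphi_{y^0-H,k}(y^0)\le 0$ because $0\in H$ (so $y^0\in y^0-H+0\cdot k$), while weak efficiency together with the level-set identity $\{z\in Y\mid \varphi_{y^0-H,k}(z)<0\}=y^0-\operatorname*{int}H$ gives $\varphi_{y^0-H,k}\ge 0$ on $F$; and in (ii) the inclusion obtained from $H+D\subseteq H$ is $A-D\subseteq A$ for $A=y^0-H$, which yields $\{t\mid y^2\in A+tk\}\subseteq\{t\mid y^1\in A+tk\}$ and hence $\varphi(y^1)\le\varphi(y^2)$, the conclusion you state.
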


\begin{proof}
$y^0\in \operatorname*{WMin}(F,H)$ implies by \cite[Theorem 7]{Wei16b}:
\begin{eqnarray*}
\varphi _{y^0-H,k}(y^0) & = & \operatorname*{min}_{y\in F}\varphi _{y^0-H,k}(y)=0 \mbox{ and}\\
\varphi _{-H,k} (y^0-y^0) & = & \operatorname*{min}_{y\in F}\varphi _{-H,k}(y-y^0)=0.
\end{eqnarray*} 
The continuity of these functionals and their other properties mentioned in the theorem result from the Theorems 3.1, 2.16 and 2.9 in \cite{Wei17a}.
\end{proof}

\begin{lemma}\label{suff-hab-t512}
The assumptions for $H$ and $k$ in Theorem \ref{hab-t512} are fulfilled and the constructed functionals are finite-valued if one of the following conditions holds:
\begin{itemize}
\item[\rm (a)] $H$ is a closed convex proper subset of $Y$ with $0\in\operatorname*{bd}H$, $\operatorname*{int}H\not=\emptyset$, $k\in 0^+H$ and $Y=H+\mathbb{R}k$.
\item[\rm (b)] $H\subset Y$ is a non-trivial closed convex cone with $k\in\operatorname*{int}H$.
\item[\rm (c)] $H$ is a closed proper subset of $Y$ with $0\in \operatorname*{bd}H$ and $D$ is a non-trivial cone with $k\in\operatorname*{int}D$
and $H+\operatorname*{int}D\subseteq H$.
\end{itemize}
\end{lemma}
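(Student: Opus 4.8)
The plan is to check, in each of the three cases, that the hypotheses on $H$ and $k$ required in Theorem~\ref{hab-t512} hold, together with the extra hypothesis $Y=\operatorname*{bd}H+\mathbb{R}k$ of its part~(i); the latter then delivers the finite-valuedness of $\varphi_{y^{0}-H,k}$ and $\varphi_{-H,k}$. Concretely I would verify: $H$ is a closed proper subset of $Y$; $0\in\operatorname*{bd}H$; $k\in Y\setminus\{0\}$; $H+\mathbb{R}_{>}k\subseteq\operatorname*{int}H$; and $Y=\operatorname*{bd}H+\mathbb{R}k$. Several of these are routine: properness of $H$ follows from $0\in\operatorname*{bd}H$ (since the boundary of $Y$ and of $\emptyset$ is empty), and $k\neq0$ follows either because $Y=H+\mathbb{R}k$ forbids $H=Y$, or because $0$ cannot lie in the interior of a non-trivial cone. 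In case (b) I would first observe that $H\neq Y$ and $0\in H$ force $0\notin\operatorname*{int}H$, hence $0\in\operatorname*{bd}H$, and that (b) is the case $D:=H$ of (c), since $H+\operatorname*{int}H\subseteq H+H\subseteq H$ for a convex cone. Thus it suffices to treat (a) and (c).

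For the inclusion $H+\mathbb{R}_{>}k\subseteq\operatorname*{int}H$ in case (c): as $D$ is a cone, $\operatorname*{int}D$ is stable under multiplication by positive scalars, so $\mathbb{R}_{>}k\subseteq\operatorname*{int}D$; and $H+\operatorname*{int}D\subseteq\operatorname*{int}H$, because for $h\in H$ and $d\in\operatorname*{int}D$ one may pick a $0$-neighbourhood $V$ with $d+V\subseteq\operatorname*{int}D$, so that $h+d+V\subseteq H+\operatorname*{int}D\subseteq H$. In case (a), where $H$ need not be a cone, I would instead argue by contradiction with a supporting functional: if $h+tk\in\operatorname*{bd}H$ for some $h\in H$ and $t>0$, then, since $\operatorname*{int}H\neq\emptyset$, there is a continuous linear $\ell\neq0$ with $\ell\le\ell(h+tk)$ on $H$; evaluating $\ell$ at $h+sk\in H$ for $s>0$ yields $\ell(k)\le0$, and at $h$ yields $\ell(k)\ge0$, so $\ell(k)=0$, and then $Y=H+\mathbb{R}k$ makes $\ell$ bounded above on all of $Y$, which is impossible.

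The real work, and the step I expect to be the main obstacle, is $Y=\operatorname*{bd}H+\mathbb{R}k$; I would handle (a) and (c) by the same device. Fix $y\in Y$ and set $T_{y}:=\{s\in\mathbb{R}\mid y+sk\in H\}$. Since $H+\mathbb{R}_{>}k\subseteq H$, the set $T_{y}$ is an up-set, and since $H$ is closed, $T_{y}$ is closed; hence $T_{y}\in\{\emptyset,\,\mathbb{R},\,[a,\infty)\}$ for some $a\in\mathbb{R}$. In the case $T_{y}=[a,\infty)$ we have $y+ak\in H$ but $y+(a-\varepsilon)k\notin H$ for every $\varepsilon>0$, so $y+ak\notin\operatorname*{int}H$, that is $y+ak\in\operatorname*{bd}H$, giving $y=(y+ak)+(-a)k\in\operatorname*{bd}H+\mathbb{R}k$ --- exactly what is required. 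So it remains to rule out $T_{y}=\emptyset$ and $T_{y}=\mathbb{R}$. In case (a): $T_{y}\neq\emptyset$ because $y\in Y=H+\mathbb{R}k$; and $T_{y}=\mathbb{R}$ would put the line $y+\mathbb{R}k$ inside the closed convex set $H$, forcing $k,-k\in0^{+}H$ and hence $H+\mathbb{R}k=H\neq Y$, a contradiction. In case (c): $T_{y}\neq\emptyset$ because for large $t>0$ the element $y/t+k$ lies near $k\in\operatorname*{int}D$, so $y+tk=t\,(y/t+k)\in\operatorname*{int}D\subseteq H$ (using $0\in H$ and $H+\operatorname*{int}D\subseteq H$); and $T_{y}\neq\mathbb{R}$ because $-\operatorname*{int}D\cap H=\emptyset$ --- indeed $-d\in H$ with $d\in\operatorname*{int}D$ would give $0=(-d)+d\in H+\operatorname*{int}D\subseteq\operatorname*{int}H$, contradicting $0\in\operatorname*{bd}H$ --- and then for large $t>0$, $y-tk=-t\,(k-y/t)\in-\operatorname*{int}D$ misses $H$, so $T_{y}$ is bounded below. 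The only genuinely delicate points are therefore the supporting-functional argument in (a) and the disjointness $-\operatorname*{int}D\cap H=\emptyset$ in (c); the rest is bookkeeping with recession cones and interiors.
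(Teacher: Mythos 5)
Your proposal is correct, but it proves the lemma by a genuinely different route than the paper. The paper's own proof is essentially a chain of citations: case (a) is obtained from Proposition 4.5 and Theorem 3.1 of \cite{Wei17a}, case (b) is reduced to case (a) using the fact $Y=H+\mathbb{R}k$ from \cite{Zal:86a}, and case (c) is handled by showing $k\in\operatorname*{int}0^+H$ and $Y=D+\mathbb{R}k$ and then invoking Corollary 3.12 and Theorem 3.1 of \cite{Wei17a}. You instead verify the hypotheses of Theorem \ref{hab-t512} directly and self-containedly: you reduce (b) to (c) (rather than to (a)) via $D:=H$, prove $H+\mathbb{R}_{>}k\subseteq\operatorname*{int}H$ by a neighbourhood argument in (c) and by a supporting-functional argument in (a), and obtain finite-valuedness through part (i) of the theorem by establishing $Y=\operatorname*{bd}H+\mathbb{R}k$ with the closed-up-set analysis of $T_y=\{s\mid y+sk\in H\}$. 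All the individual steps check out in a general real topological vector space: Eidelheit separation only needs $\operatorname*{int}H\neq\emptyset$, which (a) assumes; the recession-cone fact you use in ruling out $T_y=\mathbb{R}$ (a full line through one point of a closed convex set forces $\pm k\in 0^+H$) holds in any TVS by the usual convex-combination limit argument; and $0\in H$, needed for $\operatorname*{int}D\subseteq H$ in (c), follows from $0\in\operatorname*{bd}H$ and closedness. The trade-off: the paper's proof is shorter but opaque without the companion papers, while yours makes the lemma independent of \cite{Wei17a} and \cite{Zal:86a} and exhibits explicitly where each hypothesis (properness, $Y=H+\mathbb{R}k$, $k\in\operatorname*{int}D$) is used. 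Only cosmetic blemish: your justification that $k\neq0$ in case (a) is phrased backwards (the point is that $k=0$ would give $Y=H+\mathbb{R}k=H$, contradicting properness), but the intended argument is clearly correct.
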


\begin{proof}
The statement related to (a) follows from  \cite[Proposition 4.5]{Wei17a} together with  \cite[Theorem 3.1]{Wei17a}.
(b) implies (a), where $Y=H+\mathbb{R}k$ was proved in \cite{Zal:86a}.
Condition (c) implies $k\in\operatorname*{int}0^+H$ by \cite[Prop. 3.13]{Wei17a} and $Y=D+\mathbb{R}k$,
thus by \cite[Corollary 3.12]{Wei17a} and by \cite[Theorem 3.1]{Wei17a} the assertion.
\end{proof}

Under an assumption that is equivalent to the condition given in Lemma \ref{suff-hab-t512}(a), the statement of Theorem \ref{hab-t512} was given in \cite[Theorem 3.4]{GerWei90}. It was also proved in \cite[Theorem 3.5]{GerWei90} under an assumption which is sufficient for the condition in Lemma \ref{suff-hab-t512}(c).

Furthermore, we have \cite[Corollary 3.2]{GerWei90}: 

\begin{corollary}\label{cor-jota1}
Assume $Y=D+\mathbb{R} k$ for some $k\in Y\setminus\{0\}$ with $\mathbb{R}_>k\subseteq D$.\\
Then $y^0\in \operatorname*{NI-PMin}(F,D)$ if and only if $y^0\in F$ is a point in which some continuous
convex strictly $D$--monotone functional $\varphi :Y\to \mathbb{R}$ attains its minimum on $F$.
\end{corollary}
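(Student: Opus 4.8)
The proof of Corollary~\ref{cor-jota1} should be a fairly direct combination of the two general concepts of proper efficiency with the scalarization result in Theorem~\ref{hab-t512}, so the plan is to establish the two implications separately.

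\textbf{Necessity.} Suppose $y^0\in\operatorname*{NI-PMin}(F,D)$. By Definition~\ref{d-Henig}(b) there is a closed convex set $H\subseteq Y$ with $0\in\operatorname*{bd}H$, $H+(D\setminus\{0\})\subseteq\operatorname*{int}H$ and $y^0\in\operatorname*{WMin}(F,H)$. The hypothesis $\mathbb{R}_>k\subseteq D$ gives $k\in D\setminus\{0\}$, so $H+\mathbb{R}_>k\subseteq H+(D\setminus\{0\})\subseteq\operatorname*{int}H$, which is exactly the standing assumption on $H$ and $k$ in Theorem~\ref{hab-t512}. Applying Theorem~\ref{hab-t512}(b) produces the functional $\varphi:=\varphi_{-H,k}$, which is continuous on $\operatorname*{dom}\varphi$, convex by (iv) (since $H$ is convex), and attains its minimum over $F$ (after the translation by $y^0$) at $y^0$. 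To get strict $D$-monotonicity from (iii) I need $H+(D\setminus\{0\})\subseteq\operatorname*{core}H$ and finite-valuedness of $\varphi$ on $F$; here the closedness of $H$ together with $\operatorname*{int}H\ne\emptyset$ (which follows from $\operatorname*{int}H\supseteq H+(D\setminus\{0\})\ne\emptyset$) yields $\operatorname*{core}H=\operatorname*{int}H$, so the inclusion holds, and the extra hypothesis $Y=D+\mathbb{R}k$ forces $Y=\operatorname*{bd}H+\mathbb{R}k$ (using $H+\operatorname*{int}D\subseteq\operatorname*{int}H$, or directly via Lemma~\ref{suff-hab-t512}(c) applied with this $D$), hence $\varphi$ is finite-valued by (i) and therefore strictly $D$-monotone by (iii). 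Replacing $\varphi(\cdot)$ by $\varphi(\cdot-y^0)$ gives a continuous convex strictly $D$-monotone functional on all of $Y$ with minimum over $F$ attained at $y^0$.

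\textbf{Sufficiency.} Conversely, suppose a continuous convex strictly $D$-monotone functional $\varphi:Y\to\mathbb{R}$ attains its minimum over $F$ at $y^0$; set $\alpha:=\varphi(y^0)$ and define $H:=\{y\in Y\mid\varphi(y)\le\alpha\}-y^0$ translated so that $0\in\operatorname*{bd}H$, i.e.\ $H:=\{z\in Y\mid\varphi(y^0+z)\le\alpha\}$. Then $H$ is closed (continuity), convex (convexity of $\varphi$), a proper subset of $Y$ (since $\varphi$ is finite-valued and strictly $D$-monotone, hence nonconstant), and $0\in\operatorname*{bd}H$ because $\varphi$ is continuous and nonconstant near $y^0$. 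Strict $D$-monotonicity gives, for $z\in H$ and $d\in D\setminus\{0\}$, that $\varphi(y^0+z+d)>\varphi(y^0+z)$ is not quite what is wanted; rather I use $\varphi(y^0+z-d)<\varphi(y^0+z)\le\alpha$ combined with continuity to conclude $z-d\in\operatorname*{int}H$, i.e.\ $H-(D\setminus\{0\})\subseteq\operatorname*{int}H$. Then $-H$ is closed convex with $0\in\operatorname*{bd}(-H)$ and $(-H)+(D\setminus\{0\})\subseteq\operatorname*{int}(-H)$, so $-H\in Z_{\operatorname*{NI}}$. Finally $y^0\in\operatorname*{WMin}(F,-H)$: if some $y\in F$ satisfied $y\in y^0-\operatorname*{int}(-H)=y^0+\operatorname*{int}H$, then $y-y^0\in\operatorname*{int}H$, hence $\varphi(y)<\alpha=\varphi(y^0)$, contradicting minimality. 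Thus $y^0\in\operatorname*{NI-PMin}(F,D)$.

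\textbf{Main obstacle.} The delicate point is the necessity direction: ensuring that the constructed Gerstewitz functional is genuinely \emph{finite-valued} and hence (by property (iii)) strictly $D$-monotone, which is where the hypothesis $Y=D+\mathbb{R}k$ with $\mathbb{R}_>k\subseteq D$ is really needed — it must be translated into the condition $Y=\operatorname*{bd}H+\mathbb{R}k$ of Theorem~\ref{hab-t512}(i), using that $H+\operatorname*{int}D\subseteq H$ and $k\in\operatorname*{int}D$ (Lemma~\ref{suff-hab-t512}(c)). On the sufficiency side the only subtlety is the careful bookkeeping of signs when passing between $H$ and $-H$ and between the two translated forms of $\varphi$ in Theorem~\ref{hab-t512}(b), together with the use of continuity of $\varphi$ to upgrade the strict inequality into membership in the topological interior. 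Since Theorem~\ref{hab-t512} and Lemma~\ref{suff-hab-t512} already package all the hard analytic facts about Gerstewitz functionals, the remaining work is essentially this translation between set-theoretic and functional formulations, and I would expect the write-up to be short, citing \cite[Corollary 3.2]{GerWei90} for the original statement.
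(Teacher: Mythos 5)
Your strategy is the intended one: necessity via the Gerstewitz functional $\varphi_{-H,k}$ of the set $H$ from Definition~\ref{d-Henig}(b) combined with Theorem~\ref{hab-t512}, sufficiency via the sublevel set of $\varphi$ at its minimal value (the paper gives no proof of its own and cites \cite{GerWei90}; its neighbouring Corollary~\ref{c-NI-H} is derived from exactly this construction). Two steps, however, are not justified as written. In the necessity part, the finite-valuedness of $\varphi_{-H,k}$ is argued "via Lemma~\ref{suff-hab-t512}(c) applied with this $D$", resp.\ via $H+\operatorname*{int}D\subseteq\operatorname*{int}H$. That lemma requires $D$ to be a non-trivial cone with $k\in\operatorname*{int}D$, which is not among the hypotheses of the corollary: here $D$ is an arbitrary set with $\mathbb{R}_>k\subseteq D$ and $Y=D+\mathbb{R}k$, it need not be a cone and $\operatorname*{int}D$ may be empty, so neither version of your justification applies and the claim $Y=\operatorname*{bd}H+\mathbb{R}k$ is left unproved. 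The repair stays inside the paper's toolbox: since $0\in H$ and $H+(D\setminus\{0\})\subseteq\operatorname*{int}H$, one gets $D\subseteq H$, hence $Y=D+\mathbb{R}k\subseteq H+\mathbb{R}k$; together with $H$ closed, convex, proper, $0\in\operatorname*{bd}H$, $\operatorname*{int}H\neq\emptyset$ and $k\in 0^+H$ this is precisely the situation of Lemma~\ref{suff-hab-t512}(a), which yields finite-valuedness, and then (iii) gives strict $D$-monotonicity.

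In the sufficiency part, from $y-y^0\in\operatorname*{int}H$ you conclude "hence $\varphi(y)<\alpha$". For a general continuous convex functional the interior of $\{z\mid\varphi(y^0+z)\le\alpha\}$ may contain points with $\varphi(y^0+z)=\alpha$, so this needs an argument: if $z_0\in\operatorname*{int}H$ had $\varphi(y^0+z_0)=\alpha$, then $z_0+tk\in H$ for small $t>0$, whereas strict $D$-monotonicity with $tk\in D\setminus\{0\}$ forces $\varphi(y^0+z_0+tk)>\alpha$, a contradiction. The same direction $k$ is what makes your claim $0\in\operatorname*{bd}H$ precise ($\varphi(y^0+tk)>\alpha$ for all $t>0$, with $tk\to 0$); "nonconstant near $y^0$" alone is not enough. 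Alternatively, both points disappear if you work with the open strict sublevel set $\{z\mid\varphi(y^0-z)<\alpha\}$ and invoke Lemma~\ref{l-NI-open}, which also spares you the bookkeeping with $-H$. With these repairs the argument is correct.
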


For $Y=\mathbb{R}^{\ell }$ and $D=\mathbb{R}_+^{\ell }$, Iwanow und Nehse
\cite[Theorem 2]{iwne84} proved the statement of Corollary \ref{cor-jota1}.

The proof of \cite[Corollary 3.2]{GerWei90} and the properties of functionals $\varphi_{A,k}$ yield the following statement.

\begin{corollary}\label{c-NI-H}
Assume $Y=D+\mathbb{R} k$ for some $k\in Y\setminus\{0\}$ with $\mathbb{R}_>k\subseteq D$.\\
Then $y^0\in \operatorname*{NI-PMin}(F,D)$ if and only if there exists some closed convex set $H\subseteq Y$ with $0\in \operatorname*{bd}H$ and $H+(D\setminus \{ 0 \})\subseteq \operatorname*{int}H$ such that (a) or (b) holds. Note that (a) and (b) are equivalent to each other.
\begin{itemize}
\item[\rm (a)] $\varphi_{y^0-H,k}(y^0)=\operatorname*{min}_{y\in F}\varphi_{y^0-H,k}(y)=0$.
\item[\rm (b)] $\varphi_{-H,k}(y^0-y^0)=\operatorname*{min}_{y\in F}\varphi_{-H,k}(y-y^0)=0$.
\end{itemize}
Both functionals are finite-valued, continuous, convex and strictly $D$--monotone.
\end{corollary}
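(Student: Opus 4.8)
The plan is to deduce Corollary \ref{c-NI-H} by combining the proof scheme of Corollary \ref{cor-jota1} (i.e.\ \cite[Corollary 3.2]{GerWei90}) with the explicit construction of the Gerstewitz functionals in Theorem \ref{hab-t512}. First I would prove the direction ``$y^0\in\operatorname*{NI-PMin}(F,D)$ implies the existence of $H$''. By definition and by Lemma \ref{l-NI-open}, the assumption gives a closed convex proper set $H\subseteq Y$ with $0\in\operatorname*{bd}H$ and $H+(D\setminus\{0\})\subseteq\operatorname*{int}H$ such that $y^0\in\operatorname*{Min}(F,\operatorname*{int}H)=\operatorname*{WMin}(F,H)$. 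The standing hypothesis $Y=D+\mathbb{R}k$ with $\mathbb{R}_>k\subseteq D$ means condition (c) of Lemma \ref{suff-hab-t512} is in force (with this $D$ and $k$, noting $\mathbb{R}_>k\subseteq D$ forces $k\in\operatorname*{int}D$ once we observe $Y=D+\mathbb{R}k$ yields $\operatorname*{int}D\neq\emptyset$ and $k$ an interior direction; more carefully, $H+\operatorname*{int}D\subseteq H$ follows from $H+(D\setminus\{0\})\subseteq\operatorname*{int}H\subseteq H$). Hence Theorem \ref{hab-t512} applies: $\varphi_{y^0-H,k}$ and $\varphi_{-H,k}$ satisfy (a) and (b) respectively, they are continuous on their (full, by Lemma \ref{suff-hab-t512}) domain, convex by (iv), and strictly $D$-monotone on $F$ by (iii) since $H+(D\setminus\{0\})\subseteq\operatorname*{int}H\subseteq\operatorname*{core}H$ and finite-valuedness on $F$ holds. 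The equivalence of (a) and (b) is exactly the pair of displayed equalities in the proof of Theorem \ref{hab-t512}, so it comes for free.

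Next I would prove the converse. Suppose some closed convex $H$ with $0\in\operatorname*{bd}H$ and $H+(D\setminus\{0\})\subseteq\operatorname*{int}H$ satisfies (a) (equivalently (b)). The key point is that (a), i.e.\ $\varphi_{y^0-H,k}(y^0)=\min_{y\in F}\varphi_{y^0-H,k}(y)=0$, implies $y^0\in\operatorname*{WMin}(F,H)$: by \cite[Theorem 7]{Wei16b} (the result invoked in the proof of Theorem \ref{hab-t512}), the level-set description $\operatorname*{int}H=\{y\mid\varphi_{y^0-H,k}(y^0-y)<0\}$ from Theorem \ref{hab-t512}(a) translates the minimality of $\varphi_{y^0-H,k}$ at $y^0$ into $F\cap(y^0-\operatorname*{int}H)\subseteq\{y^0\}$, which is precisely $y^0\in\operatorname*{Min}(F,\operatorname*{int}H)=\operatorname*{WMin}(F,H)$. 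Since $H$ is closed convex with $0\in\operatorname*{bd}H$ and $H+(D\setminus\{0\})\subseteq\operatorname*{int}H$, Definition \ref{d-Henig}(b) gives $y^0\in\operatorname*{NI-PMin}(F,D)$.

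Finally, the closing sentence ``both functionals are finite-valued, continuous, convex and strictly $D$-monotone'' is assembled from: finite-valuedness and continuity from Lemma \ref{suff-hab-t512}(c) and Theorem \ref{hab-t512}(a),(b); convexity from Theorem \ref{hab-t512}(iv); and strict $D$-monotonicity on $F$ from Theorem \ref{hab-t512}(iii), using that finite-valuedness on $F$ is now established. I expect the only real obstacle to be the bookkeeping needed to check that the standing hypothesis $Y=D+\mathbb{R}k$, $\mathbb{R}_>k\subseteq D$ genuinely activates Lemma \ref{suff-hab-t512}(c) for every admissible $H$ — in particular verifying $H+\operatorname*{int}D\subseteq H$ and that $k\in\operatorname*{int}D$ — and the verification that (a) $\Leftrightarrow$ (b) and that (a) $\Leftrightarrow$ $y^0\in\operatorname*{WMin}(F,H)$ hold in both directions, which is exactly the content already extracted from \cite[Theorem 7]{Wei16b} and \cite[Corollary 3.2]{GerWei90}; everything else is a direct citation of Theorem \ref{hab-t512} and Lemma \ref{suff-hab-t512}.
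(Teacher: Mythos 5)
Your overall architecture matches what the paper intends (its ``proof'' is just a citation of the proof of \cite[Corollary 3.2]{GerWei90} together with the properties of $\varphi_{A,k}$): forward direction via Definition \ref{d-Henig}(b) and Theorem \ref{hab-t512}, converse via the level-set description of $\operatorname*{int}H$ and the scalar characterization of weak efficiency, and the translation identity $\varphi_{y^0-H,k}(y)=\varphi_{-H,k}(y-y^0)$ for the equivalence of (a) and (b). However, the step where you ``activate'' Lemma \ref{suff-hab-t512}(c) is genuinely wrong. That lemma requires $D$ to be a non-trivial \emph{cone} with $k\in\operatorname*{int}D$; in this corollary $D$ is only a proper subset of $Y$ with $D\setminus\{0\}\neq\emptyset$, $\mathbb{R}_>k\subseteq D$ and $Y=D+\mathbb{R}k$ --- it need not be a cone, and your claim that $Y=D+\mathbb{R}k$ forces $\operatorname*{int}D\neq\emptyset$ (hence $k\in\operatorname*{int}D$) is false: in $Y=\mathbb{R}^2$ take $k=(0,1)^T$ and $D=(\mathbb{R}\times\{0\})\cup\mathbb{R}_>k$; then $\mathbb{R}_>k\subseteq D$ and $D+\mathbb{R}k=\mathbb{R}^2$, yet $\operatorname*{int}D=\emptyset$. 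The paper itself remarks immediately after the corollary that ``$D$ a non-trivial cone with $k\in\operatorname*{core}D$'' is merely one \emph{sufficient} condition for the corollary's hypotheses, so it cannot be a consequence of them.

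The gap is easy to close without part (c). The hypothesis $H+\mathbb{R}_>k\subseteq\operatorname*{int}H$ of Theorem \ref{hab-t512} follows directly from $\mathbb{R}_>k\subseteq D\setminus\{0\}$ (since $k\neq 0$) and $H+(D\setminus\{0\})\subseteq\operatorname*{int}H$; finite-valuedness then comes from Lemma \ref{suff-hab-t512}(a) applied to $H$ itself: $0\in H$ and $0+(D\setminus\{0\})\subseteq\operatorname*{int}H$ give $D\subseteq H$, hence $Y=D+\mathbb{R}k\subseteq H+\mathbb{R}k$; moreover $H+\mathbb{R}_>k\subseteq H$ gives $k\in 0^+H$, and $\operatorname*{int}H\neq\emptyset$ because $D\setminus\{0\}\neq\emptyset$. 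The same verification is what legitimizes the level-set identity you use in the converse direction; it too rests only on $\mathbb{R}_>k\subseteq D\setminus\{0\}$, not on Lemma \ref{suff-hab-t512}(c). One further small point: Theorem \ref{hab-t512}(iii) yields strict $D$-monotonicity only on $F$, whereas the corollary asserts it outright; with finite-valuedness on all of $Y$ established, apply the underlying monotonicity results of \cite{Wei17a} (equivalently, the argument of Theorem \ref{hab-t512}(iii) with $F$ replaced by $Y$) to get the stated global strict $D$-monotonicity.
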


The assumptions of Corollary \ref{cor-jota1} and of Corollary \ref{c-NI-H} are  fulfilled if
$D\subset Y$ is a non-trivial cone with $k\in \operatorname*{core}D$.
Because of \cite[Proposition 4]{Zal:86a}, the assumptions are also satisfied if
$0\in D$, $D$ is convex, $\operatorname*{core}D\neq
\emptyset $ and $k\in \operatorname*{core}0^+(\operatorname*{cl}D)$. 

We have proved an analogous statement for nonconvex functionals and sets \cite[Corollary 3.3]{GerWei90}:

\begin{corollary}\label{cor-jota2}
Assume that $D\subset Y$ is a non-trivial cone with $\operatorname*{int}D\neq \emptyset $.\\
Then $y^0\in F$ is a point in which some continuous strictly $D$--monotone functional $\varphi : Y\to\mathbb{R}$ attains its minimum on $F$
if and only if 
$y^0\in \operatorname*{Min}(F,H)$ for some open set $H\subseteq Y$
with $0\in \operatorname*{bd}H$ and $\operatorname*{cl}H+(D\setminus \{ 0 \})\subseteq H$.
\end{corollary}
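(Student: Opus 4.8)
The plan is to prove both implications. For ``only if'' I would build the open set directly from the functional; for ``if'' I would pass to $\operatorname*{cl}H$ and apply Theorem \ref{hab-t512} together with Lemma \ref{suff-hab-t512}(c).

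\textbf{From the functional to the set.} Suppose $\varphi:Y\to\mathbb{R}$ is continuous and strictly $D$-monotone with $\varphi(y^0)=\min_{y\in F}\varphi(y)$. I would put
\[ H:=\{u\in Y\mid \varphi(y^0-u)<\varphi(y^0)\}, \]
so that $y^0-H=\{w\in Y\mid\varphi(w)<\varphi(y^0)\}$. Continuity makes $H$ open. Since $\varphi(y^0-0)=\varphi(y^0)$ we have $0\notin H$; picking any $d\in D\setminus\{0\}$, strict $D$-monotonicity gives $\varphi(y^0-td)<\varphi(y^0)$ for every $t>0$, hence $td\in H$ and $td\to0$, so $0\in\operatorname*{cl}H$ and therefore $0\in\operatorname*{bd}H$. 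For the translation property, let $u\in\operatorname*{cl}H$ and $d\in D\setminus\{0\}$: continuity yields $\varphi(y^0-u)\leq\varphi(y^0)$, and applying strict $D$-monotonicity to the pair $y^0-u-d,\ y^0-u$ gives $\varphi(y^0-(u+d))<\varphi(y^0-u)\leq\varphi(y^0)$, i.e. $u+d\in H$; thus $\operatorname*{cl}H+(D\setminus\{0\})\subseteq H$. Finally $F\cap(y^0-H)=\{w\in F\mid\varphi(w)<\varphi(y^0)\}=\emptyset$ because $y^0$ minimizes $\varphi$ on $F$, so $y^0\in\operatorname*{Min}(F,H)$. (This direction uses only $D\setminus\{0\}\neq\emptyset$.)

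\textbf{From the set to the functional.} Suppose $y^0\in\operatorname*{Min}(F,H)$ with $H$ open, $0\in\operatorname*{bd}H$, and $\operatorname*{cl}H+(D\setminus\{0\})\subseteq H$; fix $k\in\operatorname*{int}D$, which is nonempty by hypothesis, and $k\neq0$ since a cone with $0$ in its interior equals $Y$, contradicting non-triviality of $D$. First note $\operatorname*{cl}H\neq Y$, for otherwise $\operatorname*{cl}H+(D\setminus\{0\})\subseteq H$ would force $H=Y$, incompatible with $0\in\operatorname*{bd}H$. The key step is the identity $\operatorname*{int}\operatorname*{cl}H=H$: the inclusion ``$\supseteq$'' holds because $H$ is open, and conversely any $v\in\operatorname*{int}\operatorname*{cl}H$ satisfies $v-tk\in\operatorname*{cl}H$ for all small $t>0$ by absorbency of $0$-neighbourhoods, so $v=(v-tk)+tk\in\operatorname*{cl}H+\mathbb{R}_>k\subseteq\operatorname*{cl}H+(D\setminus\{0\})\subseteq H$ (using $\mathbb{R}_>k\subseteq\operatorname*{int}D$). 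Hence $0\notin\operatorname*{int}\operatorname*{cl}H$, so $0\in\operatorname*{bd}\operatorname*{cl}H$, and $\operatorname*{WMin}(F,\operatorname*{cl}H)=\operatorname*{Min}(F,\operatorname*{int}\operatorname*{cl}H)=\operatorname*{Min}(F,H)\ni y^0$. Since $\operatorname*{int}D\subseteq D\setminus\{0\}$ gives $\operatorname*{cl}H+\operatorname*{int}D\subseteq\operatorname*{cl}H$, Lemma \ref{suff-hab-t512}(c) applies to $\operatorname*{cl}H$ and $k$: the hypotheses of Theorem \ref{hab-t512} hold and the resulting functionals are finite-valued. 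Therefore $\varphi:=\varphi_{y^0-\operatorname*{cl}H,k}:Y\to\mathbb{R}$ is finite-valued, continuous, with $\varphi(y^0)=\min_{y\in F}\varphi(y)=0$; it is $D$-monotone by Theorem \ref{hab-t512}(ii) (as $\operatorname*{cl}H+D\subseteq\operatorname*{cl}H$), and, being finite-valued with $\operatorname*{cl}H+(D\setminus\{0\})\subseteq H=\operatorname*{int}\operatorname*{cl}H\subseteq\operatorname*{core}\operatorname*{cl}H$, it is strictly $D$-monotone by Theorem \ref{hab-t512}(iii) (equivalently \cite[Theorem 2.9]{Wei17a}).

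\textbf{Main obstacle.} The only genuine subtlety is that Theorem \ref{hab-t512} and Lemma \ref{suff-hab-t512} are formulated for closed sets, while the corollary concerns open $H$; the bridge is the identity $\operatorname*{int}\operatorname*{cl}H=H$, which is available precisely because of the translation property $\operatorname*{cl}H+(D\setminus\{0\})\subseteq H$ in a direction $k\in\operatorname*{int}D$. A secondary point to watch is that the constructed Gerstewitz functional has to be real-valued (and strictly $D$-monotone) on all of $Y$, not merely on $F$; this is exactly what the extra hypothesis of Lemma \ref{suff-hab-t512}(c), i.e. $\operatorname*{cl}H+\operatorname*{int}D\subseteq\operatorname*{cl}H$ with $k\in\operatorname*{int}D$, secures — and it is here, and only here, that $\operatorname*{int}D\neq\emptyset$ is needed.
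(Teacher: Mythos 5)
Your proof is correct, and it takes essentially the route the paper relies on: the paper states this corollary without proof (quoting \cite[Corollary 3.3]{GerWei90}), but your forward direction via the sublevel set $H=\{u\in Y\mid \varphi(y^0-u)<\varphi(y^0)\}$ is exactly the construction the paper invokes elsewhere (proofs of Corollary \ref{Henig_minima} and Theorem \ref{hab-t521}(c)), and your converse via $\operatorname*{int}\operatorname*{cl}H=H$, Theorem \ref{hab-t512} and Lemma \ref{suff-hab-t512}(c) applied to $\operatorname*{cl}H$ with $k\in\operatorname*{int}D$ is the intended use of that machinery. The delicate points — $0\notin\operatorname*{int}D$, properness of $\operatorname*{cl}H$, and upgrading strict $D$-monotonicity from $F$ to all of $Y$ via finite-valuedness and \cite[Theorem 2.9]{Wei17a} — are all handled correctly.
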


\begin{corollary}\label{Henig_minima}
\begin{itemize}
\item[]
\item[\rm (a)] $y^0\in \operatorname*{He-PMin}(F,D)$ if and only if $y^0\in F$ is a point for which there exists some continuous sublinear strictly $D$--monotone functional $\varphi :Y\to \mathbb{R}$
with $\varphi (y^0-y^0)=\operatorname*{min}_{y\in F}\varphi (y-y^0)=0$.
\item[\rm (b)] $y^0\in \operatorname*{He-PMin}(F,D)$ if and only if there exists some non-trivial closed convex cone $H\subseteq Y$ with $D\setminus \{ 0 \}\subseteq \operatorname*{int}H$ such that $\varphi_{-H,k}(y^0-y^0)=\operatorname*{min}_{y\in F}\varphi_{-H,k}(y-y^0)=0$, where $k\in \operatorname*{int}H$ can be chosen arbitrarily.
$\varphi_{-H,k}$ is finite-valued, continuous, sublinear and strictly $D$--monotone.
\end{itemize}
\end{corollary}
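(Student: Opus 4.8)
The plan is to derive both equivalences from Proposition \ref{l-He-NI} together with Theorem \ref{hab-t512}, using Lemma \ref{suff-hab-t512}(b) to verify that the hypotheses of Theorem \ref{hab-t512} are met for the cones involved. I would prove (b) first, since (a) follows from it by forgetting the specific form of the functional in one direction and constructing an appropriate cone in the other.

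For (b): suppose $y^0\in\operatorname*{He-PMin}(F,D)$. By Proposition \ref{l-He-NI}, there is a closed convex cone $H\subseteq Y$ with $D\setminus\{0\}\subseteq\operatorname*{int}H$ and $y^0\in\operatorname*{WMin}(F,H)$; since $D\setminus\{0\}\neq\emptyset$ we have $\operatorname*{int}H\neq\emptyset$, and since $H$ is a proper subset (it is a convex cone containing $D\setminus\{0\}$ in its interior but $H\neq Y$, as otherwise $\operatorname*{WMin}(F,H)$ would force $F$ to be a point and the statement is vacuous or trivial — I would phrase this carefully, noting $H$ is non-trivial), $H$ is a non-trivial closed convex cone. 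Pick any $k\in\operatorname*{int}H$. By Lemma \ref{suff-hab-t512}(b), the hypotheses of Theorem \ref{hab-t512} hold and $\varphi_{-H,k}$ is finite-valued; Theorem \ref{hab-t512}(b) then gives $\varphi_{-H,k}(y^0-y^0)=\min_{y\in F}\varphi_{-H,k}(y-y^0)=0$, and items (i), (iv), (v) give finite-valuedness, convexity and sublinearity, while (iii) gives strict $D$-monotonicity provided $H+(D\setminus\{0\})\subseteq\operatorname*{core}H$. This inclusion holds because $H$ is a convex cone with $D\setminus\{0\}\subseteq\operatorname*{int}H=\operatorname*{core}H$: for $h\in H$ and $d\in D\setminus\{0\}$ we have $h+d\in H+\operatorname*{int}H\subseteq\operatorname*{int}H$. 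Continuity of $\varphi_{-H,k}$ on $\operatorname*{dom}\varphi_{-H,k}=Y$ is from Theorem \ref{hab-t512}. Conversely, if such a cone $H$ and the displayed equation exist, then by the characterization $\operatorname*{int}H=\{y\in Y\mid\varphi_{-H,k}(-y)<0\}$ from Theorem \ref{hab-t512}(b) and the fact that $\varphi_{-H,k}(y^0-y^0)=0$ is the minimum over $y-y^0$, $y\in F$, no $y\in F$ satisfies $\varphi_{-H,k}(y-y^0)<0$, i.e. $y-y^0\notin-\operatorname*{int}H$ for all $y\in F$; equivalently $F\cap(y^0-\operatorname*{int}H)\subseteq\{y^0\}$, so $y^0\in\operatorname*{WMin}(F,H)$, and Proposition \ref{l-He-NI} yields $y^0\in\operatorname*{He-PMin}(F,D)$.

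For (a): the "only if" direction is immediate from (b), since $\varphi:=\varphi_{-H,k}$ is a continuous sublinear strictly $D$-monotone functional with the required minimality property. For the "if" direction, given a continuous sublinear strictly $D$-monotone $\varphi:Y\to\mathbb{R}$ with $\varphi(y^0-y^0)=\min_{y\in F}\varphi(y-y^0)=0$, I would set $H:=\{y\in Y\mid\varphi(-y)\le 0\}$ (equivalently $-\{y\mid\varphi(y)\le 0\}$). Sublinearity of $\varphi$ makes $H$ a convex cone, continuity makes it closed, and strict $D$-monotonicity forces $D\setminus\{0\}\subseteq\operatorname*{int}H$: for $d\in D\setminus\{0\}$, strict monotonicity gives $\varphi(-d)=\varphi((y^0-d)-y^0)<\varphi(y^0-y^0)=0$ wait—I need $\varphi(-d)<0$, which follows from $\varphi(0-d)<\varphi(0)$ by strict $D$-monotonicity applied with $y^1=-d$, $y^2=0$, $y^2-y^1=d\in D\setminus\{0\}$; continuity then puts a neighborhood of $-d$ inside $\{\varphi<0\}\subseteq\operatorname*{int}H$. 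Finally, $\min_{y\in F}\varphi(y-y^0)=0$ means $\varphi(y-y^0)\ge 0$, i.e. $-(y-y^0)\notin\operatorname*{int}H$ is not quite it; rather $y-y^0\notin\{z:\varphi(z)<0\}$, and since $\{\varphi<0\}\supseteq-\operatorname*{int}(\text{something})$—I would instead argue directly that $F\cap(y^0-(\text{int of the cone }K:=\{z:\varphi(z)\le0\}))\subseteq\{y^0\}$ and that $D\setminus\{0\}\subseteq\operatorname*{int}K$, so $y^0\in\operatorname*{WMin}(F,K)$ with $K$ a closed convex cone, whence $y^0\in\operatorname*{He-PMin}(F,D)$ by Proposition \ref{l-He-NI}.

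The main obstacle I anticipate is bookkeeping the sign conventions and the interplay between the two equivalent normalizations ($\varphi_{y^0-H,k}$ evaluated at $y$ versus $\varphi_{-H,k}$ evaluated at $y-y^0$), together with making precise the reduction that lets us assume $H$ is non-trivial and that $\operatorname*{core}H=\operatorname*{int}H$ for it — the latter holds for convex cones with nonempty interior but deserves an explicit sentence. Everything else is a direct citation of Theorem \ref{hab-t512}, Lemma \ref{suff-hab-t512}(b), and Proposition \ref{l-He-NI}.
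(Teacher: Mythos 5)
Your treatment of part (b) and of the forward direction of (a) is essentially the paper's own route (Proposition \ref{l-He-NI} to get a closed convex cone $H$ with $D\setminus\{0\}\subseteq\operatorname*{int}H$ and $y^0\in\operatorname*{WMin}(F,H)$, then Lemma \ref{suff-hab-t512}(b) and Theorem \ref{hab-t512}, with $H+(D\setminus\{0\})\subseteq H+\operatorname*{int}H\subseteq\operatorname*{int}H=\operatorname*{core}H$ giving strict $D$-monotonicity), and that part is fine. The genuine gap is in the ``if'' direction of (a). Your final formulation takes $K:=\{z\mid\varphi(z)\le 0\}$ and asserts $D\setminus\{0\}\subseteq\operatorname*{int}K$; but strict $D$-monotonicity together with $\varphi(0)=0$ (sublinearity) gives $\varphi(d)>\varphi(0)=0$ for every $d\in D\setminus\{0\}$, so $D\setminus\{0\}$ is in fact disjoint from $K$ — the cone you need is the one you wrote first, $H:=\{y\mid\varphi(-y)\le 0\}=-K$. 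Moreover, even with this $H$ the decisive step is left open: from $\varphi(y-y^0)\ge 0$ on $F$ you need $F\cap(y^0-\operatorname*{int}H)\subseteq\{y^0\}$, and $y^0-y\in H$ only yields $\varphi(y-y^0)\le 0$, which does not contradict the minimum being $0$. You noticed exactly this (``is not quite it'') but then sidestepped it by switching to $K$ instead of resolving it. What is missing is the inclusion $\operatorname*{int}H\subseteq\{y\mid\varphi(-y)<0\}$, i.e.\ the standard fact that for a continuous convex function $g$ with $\{g<0\}\neq\emptyset$ one has $\operatorname*{int}\{g\le 0\}=\{g<0\}$; here $g=\varphi(-\,\cdot\,)$ and $\{g<0\}\ni d$ because $\varphi(-d)<\varphi(0)=0$. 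With that lemma supplied, your direct construction does close the argument, so the route is salvageable, but as written the if-direction of (a) is not proved.

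For comparison, the paper handles this direction differently: it applies \cite[Theorem 3.3]{GerWei90} to $\tilde\varphi(y):=\varphi(y-y^0)$, obtaining an open set $H$ with $\operatorname*{cl}H+(D\setminus\{0\})\subseteq H$ such that $H\cup\{0\}$ is a convex cone and $y^0\in\operatorname*{Min}(F,H)$, whence $y^0\in\operatorname*{WMin}(F,\operatorname*{cl}H)$ and Proposition \ref{l-He-NI} concludes; part (b) is then read off from the construction in Theorem \ref{hab-t512}. Your sublevel-set construction is more self-contained (no appeal to \cite{GerWei90}), at the price of having to prove the interior-of-sublevel-set identity yourself and of keeping the sign conventions straight. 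Two smaller points: the identity $\operatorname*{int}H=\{y\mid\varphi_{-H,k}(-y)<0\}$ that you invoke in the converse of (b) is stated in Theorem \ref{hab-t512} under the hypothesis $y^0\in\operatorname*{WMin}(F,H)$, so you should cite it as a property of the Gerstewitz functional under the assumptions on $H$ and $k$ alone (it is, via \cite[Theorem 3.1]{Wei17a}); and the non-triviality of the cone delivered by Proposition \ref{l-He-NI} deserves the careful sentence you promised, though the paper glosses over this point as well.
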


\begin{proof}
\begin{itemize}
\item[]
\item[\rm (a)] The forward direction follows from Theorem \ref{hab-t512} with Lemma \ref{suff-hab-t512}.\\
Suppose now that $y^0\in F$ is a point for which there exists some continuous
sublinear strictly $D$--monotone functional $\varphi :Y\to \mathbb{R}$
with $\varphi (y^0-y^0)=\operatorname*{min}_{y\in F}\varphi (y-y^0)=0$. Apply \cite[Theorem 3.3]{GerWei90} to $\tilde{\varphi } :Y\to \mathbb{R}$ given by $\tilde{\varphi }(y):=\varphi (y-y^0)\;\forall y\in Y$. Then just $\varphi (y)=\tilde{\varphi }(y+y^0)\;\forall y\in Y$, and we get $y^0\in\operatorname*{Min}(F,H)$ for some open set $H$ for which 
$\operatorname*{cl}H+(D\setminus \{ 0 \})\subseteq H$ holds and $H\cup\{ 0\}$ is a convex cone. Since
$\operatorname*{int}\operatorname*{cl}H=\operatorname*{int}H$, we get $y^0\in\operatorname*{WMin}(F,\bar{H})$ for $\bar{H}:=\operatorname*{cl}H$, where $\bar{H}$ is a non-trivial closed convex cone with $D\setminus \{ 0 \}\subseteq \operatorname*{int}\bar{H}$.\\
\item[\rm (b)] follows from the proof of (a) by the construction of the functional in Theorem \ref{hab-t512}.
\end{itemize}
\end{proof}

Z{\u{a}}linescu \cite{zali87} proved the statement of the above corollary under the assumption that $D$ is a convex cone and $Y$ a separated topological vector space.

\section{Proper Efficiency according to Geoffrion}\label{s-Geoff}

We now turn to proper efficiency defined by Geoffrion
\cite{geof68}, which turns out to be of basic importance for procedures in multicriteria optimization.

Throughout this section, we will assume $Y=\mathbb{R}^\ell$ with $\ell\geq 2$.
We are interested in the sets $\operatorname*{Min}(F):=\operatorname*{Min}(F,\mathbb{R}^\ell _+)$ and
$\operatorname*{WMin}(F):=\operatorname*{WMin}(F,\mathbb{R}^\ell _+)$ of Pareto--optima and weak Pareto--optima, respectively, of $F$.

\begin{definition}
$y^0\in F$ is called a properly efficient element of
$F$ according to Geoffrion iff there exists some
$K\in\mathbb{R}_{>}$ such that, for each $y\in F$ with
$y_i<y_i^0$ for
some $i\in \{ 1,\ldots ,\ell\}$, there exists some $j\in \{ 1,\ldots ,\ell\}
\setminus \{ i \}$ such that $y_i^0-y_i\leq K(y_j-y_j^0)$.
$\operatorname*{GMin}(F)$ will denote the set of these elements in
$F$.
\end{definition}

Benson \cite[Theorem 3.2]{bens79} proved that his proper efficiency notion generalizes Geoffrion's proper efficiency. This implies together with Proposition \ref{p-Henig-dense}:

\begin{proposition}
$\operatorname*{Be-PMin}(F,\mathbb{R}^\ell _+)=\operatorname*{He-PMin}(F,\mathbb{R}^\ell _+)=\operatorname*{GMin}(F)$.
\end{proposition}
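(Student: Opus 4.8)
The plan is to establish the two equalities separately, since the machinery for each is already in place. For the first equality $\operatorname*{Be-PMin}(F,\mathbb{R}^\ell_+)=\operatorname*{He-PMin}(F,\mathbb{R}^\ell_+)$, I would simply invoke Proposition \ref{p-Henig-dense}(a): it asserts that $\operatorname*{He-PMin}(F,D)=\operatorname*{Be-PMin}(F,D)$ whenever $Y=\mathbb{R}^\ell$ and $D$ is a non-trivial closed pointed convex cone. Since $D=\mathbb{R}^\ell_+$ with $\ell\geq 2$ is indeed a non-trivial closed pointed convex cone, this equality is immediate. (Here one should note that $0\in\mathbb{R}^\ell_+$, so that $\operatorname*{Be-PMin}$ is well defined and its elements are efficient, as remarked after Definition \ref{d-Henig}.)

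For the second equality $\operatorname*{Be-PMin}(F,\mathbb{R}^\ell_+)=\operatorname*{GMin}(F)$, the reference is Benson's \cite[Theorem 3.2]{bens79}, which states that Benson's properly efficient point set coincides with Geoffrion's. The only subtlety is that Benson originally formulated his notion for a \emph{closed} convex cone $D$, and the definition in part (c) of Definition \ref{d-Henig} requires $0\in D$; both hypotheses are satisfied by $D=\mathbb{R}^\ell_+$, so Benson's theorem applies verbatim. Thus $\operatorname*{GMin}(F)=\operatorname*{Be-PMin}(F,\mathbb{R}^\ell_+)$, and combining with the first equality gives the full chain $\operatorname*{Be-PMin}(F,\mathbb{R}^\ell_+)=\operatorname*{He-PMin}(F,\mathbb{R}^\ell_+)=\operatorname*{GMin}(F)$.

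The main obstacle, such as it is, is not in the logic of the argument but in checking that the ambient hypotheses of the two cited results match the present setting — i.e.\ confirming that $\mathbb{R}^\ell_+$ with $\ell\geq 2$ is non-trivial (which needs $\ell\geq 1$ for $\mathbb{R}^\ell_+\neq\{0\}$ and $\ell\geq 1$ for $\mathbb{R}^\ell_+\neq\mathbb{R}^\ell$, both clear), pointed, closed, and convex, and that $0\in\mathbb{R}^\ell_+$. Once these routine verifications are in hand, the proof is a one-line composition of Proposition \ref{p-Henig-dense}(a) with Benson's Theorem 3.2. Since the surrounding text already announces "This implies together with Proposition \ref{p-Henig-dense}," the intended proof is exactly this concatenation, and no separate proof block is even strictly necessary.
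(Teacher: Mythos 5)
Your proposal matches the paper's intended argument exactly: the paper derives the proposition by combining Benson's \cite[Theorem 3.2]{bens79} (equivalence of Benson's and Geoffrion's notions for $D=\mathbb{R}^\ell_+$) with Proposition \ref{p-Henig-dense}(a) (equality of Henig's and Benson's properly efficient sets for a non-trivial closed pointed convex cone in $\mathbb{R}^\ell$), which is precisely your concatenation, including the routine verification that $\mathbb{R}^\ell_+$ satisfies the required hypotheses.
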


Obviously, $\operatorname*{GMin}(F)\subseteq \operatorname*{Min}(F)$. In Example \ref{ex-NInotHe}, $\operatorname*{GMin}(F)=\emptyset$, though Pareto-optima of $F$ exist.

Lemma \ref{l-Benson-AF} yields:
\begin{proposition}\label{hab-s528FA}
$\operatorname*{GMin}(A)=\operatorname*{GMin}(F)$ for each set $A\subseteq \mathbb{R}^\ell$ with $F\subseteq A\subseteq F+\mathbb{R}^\ell _+$.
\end{proposition}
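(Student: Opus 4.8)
The plan is to reduce the claim entirely to Benson's proper efficiency, for which the analogous invariance statement has already been recorded in Lemma \ref{l-Benson-AF}. First I would check that the domination set $D=\mathbb{R}^\ell _+$ meets the hypotheses of that lemma: obviously $0\in\mathbb{R}^\ell _+$ and $\mathbb{R}^\ell _+ + \mathbb{R}^\ell _+\subseteq \mathbb{R}^\ell _+$. Hence, for every set $A$ with $F\subseteq A\subseteq F+\mathbb{R}^\ell _+$, Lemma \ref{l-Benson-AF} yields the equality $\operatorname*{Be-PMin}(A,\mathbb{R}^\ell _+)=\operatorname*{Be-PMin}(F,\mathbb{R}^\ell _+)$.

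Next I would invoke the identification $\operatorname*{Be-PMin}(\,\cdot\,,\mathbb{R}^\ell _+)=\operatorname*{GMin}(\,\cdot\,)$ furnished by the proposition immediately preceding this one. That proposition is stated for an arbitrary feasible image set (and $\mathbb{R}^\ell _+$ is a non-trivial closed pointed convex cone, as required through Proposition \ref{p-Henig-dense}), so it applies verbatim both to $F$ and to the possibly larger set $A$. This gives $\operatorname*{GMin}(A)=\operatorname*{Be-PMin}(A,\mathbb{R}^\ell _+)$ and $\operatorname*{GMin}(F)=\operatorname*{Be-PMin}(F,\mathbb{R}^\ell _+)$. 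Chaining the three equalities then produces $\operatorname*{GMin}(A)=\operatorname*{GMin}(F)$, which is the assertion.

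I do not expect a genuine obstacle here; the only point that merits a moment's care is confirming that the Benson--Geoffrion identification is available for $A$ and not merely for $F$, which is immediate since that proposition was formulated for an arbitrary subset of $\mathbb{R}^\ell$. A self-contained alternative would be to argue directly from the defining inequalities of Geoffrion efficiency, using $A\subseteq F+\mathbb{R}^\ell _+$ to dominate any point of $A$ by a point of $F$ and conversely $F\subseteq A$ to transfer the trade-off bound back; but this is strictly longer and offers no additional insight, so I would not pursue it.
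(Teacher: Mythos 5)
Your proposal is correct and matches the paper's own argument: the paper likewise obtains this proposition by combining Lemma \ref{l-Benson-AF} (applied with $D=\mathbb{R}^\ell_+$) with the identification $\operatorname*{Be-PMin}(\,\cdot\,,\mathbb{R}^\ell_+)=\operatorname*{GMin}(\,\cdot\,)$ from the preceding proposition, which indeed holds for an arbitrary subset of $\mathbb{R}^\ell$ and hence for $A$ as well.
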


We get from Proposition \ref{p-Henig-dense}:
\begin{theorem}\label{hab-s528}
If $A$ is closed for some set $A\subseteq \mathbb{R}^\ell$ with $F\subseteq A\subseteq F+\mathbb{R}^\ell _+$
and $\operatorname*{GMin}(F)\neq\emptyset$, then
$\operatorname*{GMin}(F)$ is dense in $\operatorname*{Min}(F)$, i.e.,
$\operatorname*{GMin}(F)\subseteq \operatorname*{Min}(F)\subseteq \operatorname*{cl}\operatorname*{GMin}(F)$.
\end{theorem}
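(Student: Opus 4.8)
The statement is essentially an immediate corollary of Proposition \ref{p-Henig-dense}(b) together with the identification $\operatorname*{GMin}(F)=\operatorname*{He-PMin}(F,\mathbb{R}^\ell_+)$ supplied by the preceding Proposition (the one asserting $\operatorname*{Be-PMin}(F,\mathbb{R}^\ell_+)=\operatorname*{He-PMin}(F,\mathbb{R}^\ell_+)=\operatorname*{GMin}(F)$). So the plan is simply to check that the hypotheses of Proposition \ref{p-Henig-dense}(b) are met and then translate its conclusion into the language of $\operatorname*{GMin}$.

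\textbf{Step 1: Verify the cone hypothesis.} Proposition \ref{p-Henig-dense} requires $Y=\mathbb{R}^\ell$ and $D$ to be a non-trivial closed pointed convex cone. Here $D=\mathbb{R}^\ell_+$ with $\ell\geq 2$, which is indeed a non-trivial (it is neither $\{0\}$ nor all of $\mathbb{R}^\ell$ since $\ell\geq 2$, though in fact $\ell\geq 1$ suffices), closed, pointed, convex cone. So the structural hypotheses hold.

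\textbf{Step 2: Translate the hypotheses and conclusion.} By hypothesis of the present theorem, there is a closed set $A$ with $F\subseteq A\subseteq F+\mathbb{R}^\ell_+$, and $\operatorname*{GMin}(F)\neq\emptyset$. Using $\operatorname*{He-PMin}(F,\mathbb{R}^\ell_+)=\operatorname*{GMin}(F)$, the latter gives $\operatorname*{He-PMin}(F,\mathbb{R}^\ell_+)\neq\emptyset$. These are precisely the two hypotheses needed for Proposition \ref{p-Henig-dense}(b) with $D=\mathbb{R}^\ell_+$. Hence Proposition \ref{p-Henig-dense}(b) yields that $\operatorname*{He-PMin}(F,\mathbb{R}^\ell_+)$ is dense in $\operatorname*{Min}(F,\mathbb{R}^\ell_+)$. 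Replacing $\operatorname*{He-PMin}(F,\mathbb{R}^\ell_+)$ by $\operatorname*{GMin}(F)$ and recalling the notation $\operatorname*{Min}(F)=\operatorname*{Min}(F,\mathbb{R}^\ell_+)$, we conclude that $\operatorname*{GMin}(F)$ is dense in $\operatorname*{Min}(F)$. Finally, density here is meant in the sense made explicit in Proposition \ref{p-Henig-dense}(b): together with the always-valid inclusion $\operatorname*{GMin}(F)\subseteq\operatorname*{Min}(F)$ (remarked just before Proposition \ref{hab-s528FA}), this reads $\operatorname*{GMin}(F)\subseteq\operatorname*{Min}(F)\subseteq\operatorname*{cl}\operatorname*{GMin}(F)$, which is the asserted formula.

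\textbf{Main obstacle.} There is essentially no obstacle: the entire content is already packaged in Proposition \ref{p-Henig-dense}, and the only thing to be careful about is the translation between the two equivalent descriptions $\operatorname*{GMin}(F)$ and $\operatorname*{He-PMin}(F,\mathbb{R}^\ell_+)$, which is exactly the preceding Proposition. One should also double-check that "dense in $\operatorname*{Min}(F)$" is being used with the same meaning in both statements, i.e. $\operatorname*{Min}(F)\subseteq\operatorname*{cl}\operatorname*{GMin}(F)$ rather than the a priori weaker $\operatorname*{cl}\operatorname*{GMin}(F)=\operatorname*{cl}\operatorname*{Min}(F)$; Proposition \ref{p-Henig-dense}(b) gives the former (Henig's original theorems \cite[Theorem 5.1, Theorem 5.2]{Hen82} are stated this way), so the sharper formulation in the display is justified.
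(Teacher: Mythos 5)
Your proposal is correct and coincides with the paper's own argument: the paper derives Theorem \ref{hab-s528} directly from Proposition \ref{p-Henig-dense}(b), using the identification $\operatorname*{GMin}(F)=\operatorname*{He-PMin}(F,\mathbb{R}^\ell_+)$ established in the preceding proposition, exactly as you do.
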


A constructive proof of this statement is given in \cite[Proposition 2]{weid93}. 

\begin{remark}
Podinovskij and Nogin \cite[3.1]{pono82} proved $\operatorname*{Min}(F)\subseteq \operatorname*{cl}\operatorname*{GMin}(F)$ for
closed convex sets $F$ on the one hand, and for closed
sets $F$ for which there exist some $u\in\mathbb{R}^\ell$ and some $w\in \operatorname*{int}\mathbb{R}_+^\ell$ with $F\subseteq
u+\{y\in\mathbb{R}^\ell \mid w^Ty\geq 0\}$ on the other hand.
\end{remark}

The characterization of Henig's proper efficiency in Section \ref{s-propeff} results in the following proposition.

\begin{proposition}\label{p-Geoff-cones}
The following statements are equivalent to each other:
\begin{itemize}
\item[\rm (a)] $y\in\operatorname*{GMin}(F)$.
\item[\rm (b)] $y\in \operatorname*{Min}(F,H)$ for some convex cone $H$ with $\mathbb{R}^\ell _+\setminus\{0\}\subseteq\operatorname*{int}H$.
\item[\rm (c)] $y\in\operatorname*{WMin}(F,H)$ for some convex cone $H$ with $\mathbb{R}^\ell _+\setminus\{0\}\subseteq \operatorname*{int}H$.
\item[\rm (d)] $y\in\operatorname*{WMin}(F,H)$ for some closed convex cone $H$ with $\mathbb{R}^\ell _+\setminus\{0\}\subseteq \operatorname*{int}H$.
\end{itemize}
\end{proposition}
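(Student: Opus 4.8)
The strategy is to deduce Proposition \ref{p-Geoff-cones} almost immediately from the results already established in Section \ref{s-propeff}. The key observation is that $\operatorname*{GMin}(F)=\operatorname*{He-PMin}(F,\mathbb{R}^\ell_+)$, which is exactly the content of the Proposition preceding Proposition \ref{hab-s528FA}. Once this identification is made, the equivalence of (a) with (b), (c) and (d) is nothing but Proposition \ref{l-He-NI} specialized to $Y=\mathbb{R}^\ell$ and $D=\mathbb{R}^\ell_+$: indeed, statement (b) here is just the definition of $\operatorname*{He-PMin}(F,\mathbb{R}^\ell_+)$ (Definition \ref{d-Henig}(a)), while (c) and (d) here are statements (b) and (c) of Proposition \ref{l-He-NI} with $D=\mathbb{R}^\ell_+$.

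Concretely, I would argue as follows. First, recall from the Proposition stated just after the definition of $\operatorname*{GMin}$ that $\operatorname*{Be-PMin}(F,\mathbb{R}^\ell_+)=\operatorname*{He-PMin}(F,\mathbb{R}^\ell_+)=\operatorname*{GMin}(F)$; in particular (a) is equivalent to ``$y\in\operatorname*{He-PMin}(F,\mathbb{R}^\ell_+)$''. Next, note that $D:=\mathbb{R}^\ell_+$ is a proper subset of $Y=\mathbb{R}^\ell$ with $D\setminus\{0\}\neq\emptyset$, so the standing assumptions of Section \ref{s-propeff} are met and Proposition \ref{l-He-NI} applies verbatim with this choice of $D$. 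Its three equivalent conditions read: (a') $y\in\operatorname*{He-PMin}(F,\mathbb{R}^\ell_+)$; (b') $y\in\operatorname*{WMin}(F,H)$ for some convex cone $H$ with $\mathbb{R}^\ell_+\setminus\{0\}\subseteq\operatorname*{int}H$; (c') $y\in\operatorname*{WMin}(F,H)$ for some closed convex cone $H$ with $\mathbb{R}^\ell_+\setminus\{0\}\subseteq\operatorname*{int}H$. These are exactly statements (a), (c), (d) of the present proposition. It therefore only remains to insert statement (b) into the chain, which is trivial: (b) is literally the definition of $\operatorname*{He-PMin}(F,\mathbb{R}^\ell_+)$ (Definition \ref{d-Henig}(a) with $D=\mathbb{R}^\ell_+$), so (a) $\Leftrightarrow$ (b) by that definition, and (b) $\Rightarrow$ (c) is immediate from $\operatorname*{Min}(F,H)\subseteq\operatorname*{WMin}(F,H)$.

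There is essentially no obstacle here; the proposition is a corollary obtained by chaining together the identification $\operatorname*{GMin}(F)=\operatorname*{He-PMin}(F,\mathbb{R}^\ell_+)$ with Proposition \ref{l-He-NI}. The only point requiring a word of care is that $\operatorname*{int}\mathbb{R}^\ell_+\neq\emptyset$ and that $\mathbb{R}^\ell$ is a topological vector space, so that all the hypotheses of Proposition \ref{l-He-NI} (and of the preliminary results it rests on) are indeed in force; both are evident for $Y=\mathbb{R}^\ell$ with $\ell\geq 2$. Accordingly I would present the proof in one short paragraph: cite the Proposition giving $\operatorname*{GMin}(F)=\operatorname*{He-PMin}(F,\mathbb{R}^\ell_+)$, observe that (b) is Definition \ref{d-Henig}(a), and invoke Proposition \ref{l-He-NI} for the remaining equivalences.
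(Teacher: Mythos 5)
Your proposal is correct and is exactly the route the paper intends: the paper gives no separate proof, stating only that the proposition "results from" the characterization of Henig's proper efficiency in Section 3, i.e., from the identification $\operatorname*{GMin}(F)=\operatorname*{He-PMin}(F,\mathbb{R}^\ell_+)$ together with Proposition \ref{l-He-NI} and Definition \ref{d-Henig}(a), which is precisely the chain you assemble. No gaps; your added remark that the standing assumptions ($D=\mathbb{R}^\ell_+$ proper, $D\setminus\{0\}\neq\emptyset$) are satisfied is a harmless bit of extra care.
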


In order to describe properly efficient elements according to Geoffrion as efficient elements w.r.t. special cones, we introduce, for each $i\in \{ 1,\ldots ,\ell \}$ and $K\in\mathbb{R}_{>}$, the set
\[ D^{i,K}:=\{ y\in \mathbb{R}^\ell  \mid  y_i>0,\, y_i+Ky_j>0\quad \forall j\in
\{ 1,\ldots ,\ell\}
\setminus \{ i \} \}\]
and, moreover, the set
\[ D^K:=\bigcup\limits_{i\in \{ 1,\ldots ,\ell \} } D^{i,K}
\quad\qquad
\forall K\in\mathbb{R}_{>}. \]

Let us first investigate properties of these sets.

\begin{lemma}\label{l-DK}
Assume $K\in\mathbb{R}_{>}$, $i\in \{ 1,\ldots ,\ell \}$.
\begin{itemize}
\item[\rm (a)] $D^{i,K}\cup \{ 0 \} $ is a convex cone,
$D^{i,K}\cap -D^{i,K}=\emptyset $,\\
$\operatorname*{cl}D^{i,K}+\mathbb{R}_+^\ell\subseteq \operatorname*{cl}D^{i,K}$,
$\operatorname*{cl}D^{i,K}+(\mathbb{R}_+^\ell\setminus \{ 0 \} )\not\subset int\,
D^{i,K}=D^{i,K}$.
\item[\rm (b)] $D^K\cup \{ 0 \}$ is a cone. It is convex if and only if $\ell =2$ and $K\geq 1$.
\item[\rm (c)] $\operatorname*{cl}D^K+(\mathbb{R}_+^\ell\setminus \{ 0 \} )\subseteq int\,
D^K$, hence $\mathbb{R}_+^\ell \setminus \{ 0 \} \subseteq int\, D^K$.
\end{itemize}
\end{lemma}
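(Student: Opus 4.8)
The plan is to treat the three parts largely by direct computation from the defining inequalities, since $D^{i,K}$ is given by the strict linear inequalities $y_i>0$ and $y_i+Ky_j>0$ for $j\neq i$. For part (a), convexity of $D^{i,K}\cup\{0\}$ is immediate: the defining conditions are preserved under addition and under multiplication by nonnegative scalars (a finite intersection of open half-spaces through the origin, together with $0$, is a convex cone). For $D^{i,K}\cap(-D^{i,K})=\emptyset$ I would note that $y\in D^{i,K}$ forces $y_i>0$ while $-y\in D^{i,K}$ forces $-y_i>0$, a contradiction. For the recession-type inclusion $\operatorname*{cl}D^{i,K}+\mathbb{R}_+^\ell\subseteq\operatorname*{cl}D^{i,K}$: first observe $D^{i,K}+\mathbb{R}_+^\ell\subseteq D^{i,K}$ because adding a vector with nonnegative coordinates only increases $y_i$ and each $y_i+Ky_j$ (here $K>0$ is used); then pass to closures. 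The computation of $\operatorname*{cl}D^{i,K}$ is just the relaxation of the strict inequalities to weak ones, so $\operatorname*{int}D^{i,K}=D^{i,K}$ (an intersection of finitely many open half-spaces is open). Finally, to see $\operatorname*{cl}D^{i,K}+(\mathbb{R}_+^\ell\setminus\{0\})\not\subseteq D^{i,K}$, I would exhibit a boundary point $z\in\operatorname*{cl}D^{i,K}$ and a vector $v\in\mathbb{R}_+^\ell\setminus\{0\}$ whose sum still lies on the boundary: e.g. take $z$ with $z_i=0$ and $v$ supported on a coordinate $j\neq i$ that is not "active," so that $z+v$ still has its $i$-th coordinate equal to $0$.

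For part (b), that $D^K\cup\{0\}=\bigcup_i\bigl(D^{i,K}\cup\{0\}\bigr)$ is a cone is clear since a union of cones is a cone. The substantive claim is the convexity characterization. For the "if" direction with $\ell=2$ and $K\ge 1$: I would write out $D^{1,K}$ and $D^{2,K}$ explicitly in $\mathbb{R}^2$ and check that their union together with $0$ is exactly the half-plane-like convex cone obtained (the condition $K\ge1$ is what makes the two sectors fit together into a convex set rather than overlapping improperly or leaving a reflex angle). For the "only if" direction, I would argue by contradiction: if $\ell\ge 3$, pick two points lying in different $D^{i,K}$'s (say one with large positive first coordinate and the others slightly negative, another with large positive second coordinate and the others slightly negative) whose midpoint fails all the defining inequalities of every $D^{i,K}$; and if $\ell=2$ but $K<1$, a similar explicit two-point witness in $\mathbb{R}^2$ breaks convexity.

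For part (c), the strategy is: since $D^{i,K}$ is open, $D^K$ is open, so $\operatorname*{int}D^K=D^K$; and $\operatorname*{cl}D^K=\bigcup_i\operatorname*{cl}D^{i,K}$ because it is a finite union. Then for $z\in\operatorname*{cl}D^{i,K}$ (some $i$) and $v\in\mathbb{R}_+^\ell\setminus\{0\}$, I must show $z+v\in D^{i,K}$: the weak inequalities $z_i\ge0$, $z_i+Kz_j\ge0$ become strict after adding $v$, provided $v$ strictly increases the relevant coordinate — and here the key point is that $v\neq0$ has $v_m>0$ for some $m$, so either $v_i>0$ (giving $(z+v)_i>0$ directly and hence all $(z+v)_i+K(z+v)_j>0$ as well) or $v_m>0$ for some $m\neq i$, in which case $(z+v)_i+K(z+v)_m\ge z_i+Kz_m+Kv_m>0$ forces... and here one needs $(z+v)_i>0$ too, which follows because $z_i+Kz_m\ge0$ and $z_i\ge0$ actually together with $z\in\operatorname*{cl}D^{i,K}$ being the relevant closure. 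The inclusion $\mathbb{R}_+^\ell\setminus\{0\}\subseteq\operatorname*{int}D^K$ then follows by taking $z=0\in\operatorname*{cl}D^{i,K}$.

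The main obstacle I anticipate is the "only if" direction of part (b): one must produce clean explicit counterexamples to convexity of $D^K\cup\{0\}$ both when $\ell\ge3$ (for every $K>0$) and when $\ell=2,\ K<1$, and verifying that the chosen midpoint genuinely fails to lie in any $D^{i,K}$ requires checking all $\ell$ systems of inequalities simultaneously. A secondary subtlety is being careful, in part (c), that adding a nonzero nonnegative vector to a point of $\operatorname*{cl}D^{i,K}$ lands back in the \emph{open} set $D^{i,K}$ and not merely in its closure; this uses $K>0$ and the fact that the active constraints at a closure point get strictly relaxed.
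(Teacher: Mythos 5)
Your plans for (a) and (b) are sound and essentially the paper's own route: the paper treats (a) and the cone property in (b) as immediate, settles $\ell=2$ by inspection, and for $\ell>2$ uses exactly the kind of two-point witness you sketch (it takes $y=(3K,-2,\dots,-2)^T\in D^{1,K}$ and $z=(-4,7K,-6,\dots,-6)^T\in D^{2,K}$ and checks that $y+z$ violates all $\ell$ systems of inequalities, the coordinates beyond the second being negative). So for those parts you only owe the explicit numbers, and such witnesses exist both for $\ell\ge 3$ and for $\ell=2$, $K<1$.

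The genuine gap is in (c), exactly where your argument trails off. Your reduction ``for $z\in\operatorname{cl}D^{i,K}$ and $v\in\mathbb{R}_+^\ell\setminus\{0\}$ show $z+v\in D^{i,K}$'' cannot be completed in the case $v_i=0$ with $v_m>0$ only for some $m\neq i$: from $z_i\ge 0$ and $z_i+Kz_j\ge 0$ you cannot recover strictness of $(z+v)_i$ or of $(z+v)_i+K(z+v)_j$ for $j\notin\{i,m\}$, and the sentence ``which follows because $z_i+Kz_m\ge0$ and $z_i\ge0$'' proves nothing. This is not merely a missing detail: for $\ell\ge 3$ the step is false, and no switch of index rescues it. Take $\ell=3$, $K=1$, $z=(1,-1,0)^T\in\operatorname{cl}D^{1,1}$ (limit of $(1,-1+\tfrac1n,\tfrac1n)^T\in D^{1,1}$) and $v=(0,0,1)^T$; then $z+v=(1,-1,1)^T$ lies in no $D^{i,1}$, since the constraint involving the second coordinate is tight for $i=1$ and $i=3$ and the second coordinate itself is negative. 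Hence $\operatorname{cl}D^K+(\mathbb{R}_+^\ell\setminus\{0\})\subseteq\operatorname{int}D^K$ cannot be reached by patching your case analysis when $\ell\ge3$; only for $\ell=2$ does the algebra close, and even there you need the index switch of the paper's first case ($z_i=0$ forces $z\ge 0$, and one passes to the coordinate where $z+v$ is positive), while for $z_i>0$ strictness comes from $v_i+Kv_j>0$, which holds automatically only when there is a single $j\neq i$. For comparison, the paper's own proof of (c) makes the same two-case split and, in its second case, asserts the strict inequality $(y_i+Ky_j)+z_i+Kz_j>0$ without justification — i.e., it glosses over precisely the point where your attempt founders. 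What your computations do establish, and what is safe, are the weaker facts $D^K+\mathbb{R}_+^\ell\subseteq D^K$, $\mathbb{R}_+^\ell\setminus\{0\}\subseteq D^K=\operatorname{int}D^K$, and $\operatorname{cl}D^K+\operatorname{int}\mathbb{R}_+^\ell\subseteq D^K$; a correct ``closure plus nonzero nonnegative vector'' statement requires either $\ell=2$ or strengthening the perturbation in this way.
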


\begin{proof}
\begin{itemize}
\item[]
\item[\rm (a)] and the cone property in (b) are obvious.\\
For $Y=\mathbb{R}^2$, it is easy to see that $D^K\cup\{ 0
\}$ is convex if and only if $K\geq 1$.\\
Assume now $\ell >2$, $K\in\mathbb{R}_{>}$.
We define $y,z\in D^K$ by\\
$y_i:=\left\{
\begin{array}{c@{\mbox{ if }}l}
3K & i=1,\\
-2 & i\in \{2,\ldots ,\ell\},
\end{array}
\right.\quad$
and
$\quad z_i:=\left\{
\begin{array}{c@{\mbox{ if }}l}
-4 & i=1,\\
7K & i=2,\\
-6 & i\in \{3,\ldots ,\ell\}.
\end{array}
\right.$\\
$\begin{array}[t]{lcl}
y+z\notin D^K, \mbox{ since }& & (y_1+z_1)+K(y_3+z_3)=-5K-4<0,\\
 & & (y_2+z_2)+K(y_3+z_3)=-K-2<0 \mbox{ and} \\
 & & y_j+z_j<0\quad \forall j\in
\{3,\ldots ,\ell\} .
\end{array}$\\
Thus, $D^K$ is not convex.
\item[\rm (c)] Consider $y\in \operatorname*{cl}D^K$, $z\in \mathbb{R}_+^\ell\setminus \{ 0 \}$.\\
$\begin{array}[t]{lcl}
 &\Rightarrow& \exists i\in \{1,\ldots ,\ell\} \,:\;  y_i\geq 0 \mbox{
and }y_i+Ky_j\geq 0
\quad \forall j\in \{1,\ldots ,\ell\}\setminus \{ i \} ,\\
 & & \mbox{and }\exists n\in \{1,\ldots ,\ell\} \,:\;  z_n>0 \mbox{ and } z_j\ge
0\quad \forall j\in \{1,\ldots ,\ell\}.
\end{array}$\\
First case: $y_i=0$.\\
$\Rightarrow y_j\ge 0 \quad \forall j\in \{1,\ldots ,\ell\}$.
$\Rightarrow y+z\in D^{n,K}\subseteq D^K$.\\
Second case: $y_i>0$.\\
$\Rightarrow y_i+z_i>0$ and
$(y_i+z_i)+K(y_j+z_j)=(y_i+Ky_j)+z_i+Kz_j>0$
for all
$j\in \{1,\ldots ,\ell\}\setminus \{ i \} $.
\end{itemize}
\end{proof}

\begin{proposition}\label{hab-s521}
\begin{equation*}
\operatorname*{GMin}(F)  =  \bigcup\limits_{K>0}\operatorname*{Min}(F,D^K)
				  =  \bigcup\limits_{K>0}\: \bigcap\limits_{i\in \{1,\ldots
,\ell\} }\operatorname*{Min}(F,D^{i,K}),
\end{equation*}
where we have for $0<K<\bar{K}$: \;
$D^{\bar{K}}\subseteq D^K$ and $D^{i,\bar{K}}\subseteq D^{i,K}
\quad \forall i\in \{1,\ldots ,\ell\}$, and thus\\
${\operatorname*{Min}(F,D^K)\subseteq \operatorname*{Min}(F,D^{\bar{K}})}$ and
${\operatorname*{Min}(F,D^{i,K})\subseteq \operatorname*{Min}(F,D^{i,\bar{K}})} \; {\forall
i\in \{1,\ldots ,\ell\}}$.
\end{proposition}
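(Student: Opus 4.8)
The plan is to prove the identity $\operatorname*{GMin}(F)=\bigcup_{K>0}\operatorname*{Min}(F,D^K)=\bigcup_{K>0}\bigcap_{i}\operatorname*{Min}(F,D^{i,K})$ by directly unwinding the definitions, and then to obtain the monotonicity statements as an easy consequence of the set inclusions $D^{\bar K}\subseteq D^K$ and $D^{i,\bar K}\subseteq D^{i,K}$ for $0<K<\bar K$. First I would handle the set inclusions: given $y\in D^{\bar K}$ with $0<K<\bar K$, we have $y_i>0$ and $y_i+\bar K y_j>0$ for the relevant index $i$; if $y_j\ge 0$ then trivially $y_i+Ky_j>0$, while if $y_j<0$ then $y_i+Ky_j>y_i+\bar K y_j>0$, so $y\in D^{i,K}$, whence $D^{i,\bar K}\subseteq D^{i,K}$ and, taking unions over $i$, $D^{\bar K}\subseteq D^K$. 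Then $\operatorname*{Min}(F,\cdot)$ is antitone in the domination set — a larger domination set makes the condition $F\cap(y^0-H)\subseteq\{y^0\}$ harder to satisfy — so the inclusions of the $\operatorname*{Min}$ sets follow at once.

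Next I would prove the first equality, $\operatorname*{GMin}(F)=\bigcup_{K>0}\operatorname*{Min}(F,D^K)$, by translating Geoffrion's condition into the statement $F\cap(y^0-D^K)\subseteq\{y^0\}$. Fix $y^0\in F$ and $K\in\mathbb{R}_>$. The key observation is the contrapositive reformulation: $y^0\notin\operatorname*{Min}(F,D^K)$ means there is some $y\in F$, $y\neq y^0$, with $y^0-y\in D^K$, i.e. $y^0-y\in D^{i,K}$ for some $i$, i.e. $y_i^0-y_i>0$ and $(y_i^0-y_i)+K(y_j^0-y_j)>0$ for all $j\neq i$, the latter being exactly $y_i^0-y_i>K(y_j-y_j^0)$ for all $j\neq i$. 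On the other hand, $y^0$ violates Geoffrion's condition for this particular $K$ iff there is $y\in F$ and an index $i$ with $y_i<y_i^0$ such that for \emph{every} $j\neq i$ one has $y_i^0-y_i>K(y_j-y_j^0)$ (the failure of ``$\leq K(y_j-y_j^0)$ for some $j$''). These two statements coincide, so $y^0\notin\operatorname*{Min}(F,D^K)$ for a given $K$ is equivalent to $y^0$ failing Geoffrion's condition with that $K$; quantifying existentially over $K$ and negating gives $\operatorname*{GMin}(F)=\bigcup_{K>0}\operatorname*{Min}(F,D^K)$. One small technical point to watch here is the strict-vs-nonstrict inequality in Geoffrion's ``$y_i^0-y_i\leq K(y_j-y_j^0)$'': because $D^{i,K}$ is defined with strict inequalities, the negation ``$>$'' matches exactly, so no slack in $K$ is needed — this is the step I would be most careful about.

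Finally I would prove the second equality, $\bigcup_{K>0}\operatorname*{Min}(F,D^K)=\bigcup_{K>0}\bigcap_{i}\operatorname*{Min}(F,D^{i,K})$. Since $D^K=\bigcup_i D^{i,K}$, we have $F\cap(y^0-D^K)=\bigcup_i\bigl(F\cap(y^0-D^{i,K})\bigr)$, and this is contained in $\{y^0\}$ iff each $F\cap(y^0-D^{i,K})\subseteq\{y^0\}$; hence $\operatorname*{Min}(F,D^K)=\bigcap_{i\in\{1,\dots,\ell\}}\operatorname*{Min}(F,D^{i,K})$ for each fixed $K$, and taking the union over $K>0$ yields the claim. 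I do not expect a serious obstacle in this proof; the only genuine content is the bookkeeping of quantifiers and inequality directions in the reformulation of Geoffrion's condition, and the observation that $\operatorname*{Min}(F,\cdot)$ distributes over unions of domination sets as an intersection of the corresponding $\operatorname*{Min}$ sets.
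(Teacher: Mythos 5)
Your proposal is correct and follows essentially the same route as the paper: unwinding Geoffrion's condition into $y^0-y\notin D^K$ for all $y\in F$, using that $\operatorname*{Min}(F,\cdot)$ turns the union $D^K=\bigcup_i D^{i,K}$ into an intersection of efficient point sets, and deducing the monotone inclusions from $D^{i,\bar K}\subseteq D^{i,K}$. The only difference is that you verify this last set inclusion explicitly (case distinction on the sign of $y_j$), which the paper treats as immediate.
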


\begin{proof}
\begin{tabular}[t]{lcl}
$y^0\in \operatorname*{GMin}(F)$ & $\iff$ & $y^0\in F \mbox{ and } \exists K>0\quad \forall y\in F \mbox{ with } y_i^0>y_i \mbox{ for}$\\
 & & $\mbox{some } i\in \{1,\ldots ,\ell\}:$\\
 & & $\exists j\in \{1,\ldots ,\ell\}\setminus \{ i \}: \quad y_i^0-y_i\le K(y_j-y_j^0),$\\
 & $\iff$ & $y^0\in F \mbox{ and } \exists K>0\quad \forall y\in F:\quad y^0-y\notin D^K,$\\
 & $\iff$ & $\exists K>0:\quad y^0\in \operatorname*{Min}(F,D^K),$\\
 & $\iff$ & $y^0\in\bigcup\limits_{K>0}\operatorname*{Min}(F,D^K).$
\end{tabular}\\
\begin{tabular}{l}
$\operatorname*{Min}(F,D^K) =  \operatorname*{Min}(F,\bigcup\limits_{i\in
\{1,\ldots ,\ell\} }D^{i,K})=\bigcap\limits_{i\in \{1,\ldots ,\ell\} }
\operatorname*{Min}(F,D^{i,K})\quad\forall K>0.$\\
\end{tabular}\\
\begin{tabular}{l}
$0<K<\bar{K}. \Rightarrow D^{i,\bar{K}}\subseteq D^{i,K}\quad\forall i\in \{1,\ldots ,\ell\}, \mbox{ and } D^{\bar{K}}\subset D^K.$\\
$\Rightarrow E\!f\!f(F,D^{i,K})\subseteq \operatorname*{Min}(F,D^{i,\bar{K}})\quad\forall i\in \{1,\ldots ,\ell\}, \mbox{ and }$ \\
$\operatorname*{Min}(F,D^K)\subseteq \operatorname*{Min}(F,D^{\bar{K}}).$
\end{tabular}\\
\end{proof}

Moreover, we get:

\begin{proposition}\label{hab-s523}
For each convex cone $H$ with $\mathbb{R}_+^\ell\setminus \{ 0 \}\subseteq \operatorname*{int}H$, there exists some $K\in\mathbb{R}_>$
with $D^K\subseteq \operatorname*{int}H$. Then $\operatorname*{WMin}(F,H)\subseteq
\operatorname*{Min}(F,D^K)$.
\end{proposition}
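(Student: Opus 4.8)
The plan is to prove the two assertions in turn; the second one is a one-line consequence of the first, so essentially all the work lies in producing the scalar $K$.

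\emph{Producing $K$ (the main step).} I would argue by contradiction using compactness of the unit sphere $S:=\{y\in\mathbb{R}^\ell\mid\|y\|=1\}$. First note two facts that make the reduction to $S$ legitimate: since $D^{i,K}$ is defined by the \emph{homogeneous} strict inequalities $y_i>0$ and $y_i+Ky_j>0$, the set $D^K$ contains no element of norm zero and is invariant under multiplication by positive scalars; and $\operatorname*{int}H\ne\emptyset$ (it contains $\mathbb{R}_+^\ell\setminus\{0\}$) together with $H$ being a convex cone gives $\lambda\operatorname*{int}H=\operatorname*{int}H$ for every $\lambda>0$. Hence $D^K\subseteq\operatorname*{int}H$ is equivalent to $D^K\cap S\subseteq\operatorname*{int}H$. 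Now suppose that for every $n\in\mathbb{N}_>$ we have $D^n\not\subseteq\operatorname*{int}H$; then, after normalising, there is $y^n\in(D^n\cap S)\setminus\operatorname*{int}H$. By compactness of $S$ some subsequence converges to a point $y^*\in S$, and since $\mathbb{R}^\ell\setminus\operatorname*{int}H$ is closed we get $y^*\notin\operatorname*{int}H$. Passing to a further subsequence I may assume $y^n\in D^{i,n}$ for one fixed index $i$. Then $y^n_i>0$ forces $y^*_i\ge 0$; and for $j\ne i$, from $y^n_i+n\,y^n_j>0$ and $0<y^n_i\le\|y^n\|=1$ we get $y^n_j>-1/n$, so $y^*_j\ge 0$ as $n\to\infty$. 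Thus $y^*\in\mathbb{R}_+^\ell$, and $\|y^*\|=1$ gives $y^*\in\mathbb{R}_+^\ell\setminus\{0\}\subseteq\operatorname*{int}H$, contradicting $y^*\notin\operatorname*{int}H$. Hence some $K\in\mathbb{N}_>\subseteq\mathbb{R}_>$ satisfies $D^K\subseteq\operatorname*{int}H$.

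An alternative phrasing of this step, using the earlier results, is to work with the nested family of compact sets $C_K:=\operatorname*{cl}D^K\cap(S\setminus\operatorname*{int}H)$: by the monotonicity $D^{\bar K}\subseteq D^K$ for $\bar K>K$ recorded in Proposition~\ref{hab-s521} (hence $\operatorname*{cl}D^{\bar K}\subseteq\operatorname*{cl}D^K$), the $C_K$ are decreasing, and $\bigcap_{K>0}\operatorname*{cl}D^K=\mathbb{R}_+^\ell$ (using the description of $\operatorname*{cl}D^{i,K}$ implicit in Lemma~\ref{l-DK}(a) and the pigeonhole argument over the finitely many indices $i$), so $\bigcap_{K>0}C_K\subseteq(\mathbb{R}_+^\ell\cap S)\setminus\operatorname*{int}H=\emptyset$; the finite intersection property then gives $C_{K}=\emptyset$ for some single $K$, i.e.\ $\operatorname*{cl}D^K\cap S\subseteq\operatorname*{int}H$, whence $D^K\subseteq\operatorname*{int}H$ by the cone argument above.

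\emph{The inclusion of solution sets (routine).} Once $D^K\subseteq\operatorname*{int}H$ is available, I would simply invoke the antitonicity of $\operatorname*{Min}$ in the domination set: if $y^0\in\operatorname*{WMin}(F,H)=\operatorname*{Min}(F,\operatorname*{int}H)$, then $F\cap(y^0-D^K)\subseteq F\cap(y^0-\operatorname*{int}H)\subseteq\{y^0\}$, so $y^0\in\operatorname*{Min}(F,D^K)$. This gives $\operatorname*{WMin}(F,H)\subseteq\operatorname*{Min}(F,D^K)$.

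\emph{Expected difficulty.} The only real obstacle is the first step: the pointwise fact $\bigcap_{K>0}D^K=\mathbb{R}_+^\ell\setminus\{0\}\subseteq\operatorname*{int}H$ has to be upgraded to the \emph{uniform} statement $D^K\subseteq\operatorname*{int}H$ for a single $K$, and this is exactly what compactness of $S$ (plus openness of $\operatorname*{int}H$) delivers; everything else is bookkeeping with the homogeneity of $D^K$ and the monotonicity already proved in Proposition~\ref{hab-s521} and Lemma~\ref{l-DK}.
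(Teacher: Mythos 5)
Your proof is correct, but it follows a genuinely different route from the paper. You obtain the scalar $K$ non-constructively: you reduce to the unit sphere via the positive homogeneity of $D^K$ and of $\operatorname*{int}H$, and then use sequential compactness (or, in your alternative phrasing, the finite intersection property for the nested compact sets $\operatorname*{cl}D^K\cap(S\setminus\operatorname*{int}H)$, which indeed have empty total intersection because $\bigcap_{K>0}\operatorname*{cl}D^K=\mathbb{R}_+^\ell$) to upgrade the pointwise inclusion $\mathbb{R}_+^\ell\setminus\{0\}\subseteq\operatorname*{int}H$ to a uniform one; both variants are sound, including the pigeonhole step fixing the index $i$ and the passage to the limit $y^*\in\mathbb{R}_+^\ell\cap S$. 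The paper instead argues directly and constructively: since each unit vector $e^i$ lies in $\operatorname*{int}H$, there is $t_i>0$ with $y^i:=e^i-t_i\sum_{j\neq i}e^j\in H$, and any $K>\max_i 1/t_i$ works, because scaling $y^0\in D^{i,K}$ by $1/y^0_i$ places it in $y^i+(\mathbb{R}_+^\ell\setminus\{0\})\subseteq H+\operatorname*{int}H\subseteq\operatorname*{int}H$, whence $y^0\in\operatorname*{int}H$ by the cone property. What your compactness argument buys is brevity and a clean conceptual statement of why finite-dimensionality suffices; what the paper's argument buys is an explicit bound on $K$ in terms of how deep $\mathbb{R}_+^\ell\setminus\{0\}$ sits inside $\operatorname*{int}H$, in the same quantitative spirit as Lemma \ref{hab-l522} and the later lemmas where concrete constants are tracked. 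Your handling of the final inclusion $\operatorname*{WMin}(F,H)=\operatorname*{Min}(F,\operatorname*{int}H)\subseteq\operatorname*{Min}(F,D^K)$ is the same routine monotonicity step as in the paper.
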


\begin{proof}
Assume that $H$ is a convex cone with $\mathbb{R}_+^\ell\setminus \{ 0 \}
\subseteq \operatorname*{int}H$.\\
$\mathbb{R}_+^\ell\setminus \{ 0 \} \subseteq \operatorname*{int}H$. $\Rightarrow \forall
i\in \{1,\ldots,\ell\}
\quad\exists t_i>0:$
\[ y^i \mbox{ with }
y_j^i:=\left\{
\begin{array}{c@{\mbox{ if }}l}
1 & j=i,\\
-t_i & j\neq i,
\end{array}
\right. \]
is an element of $H$.\\
Assume $K>\max\limits_{i\in \{1,\ldots,\ell\} }\, \frac{1}{t_i}\quad$ and $y^0\in
D^K$.\\
$\Rightarrow \exists i\in \{ 1,\ldots,\ell\}:\quad y_i^0>0$ and
$y_i^0+Ky_j^0>0\quad
\forall j\in \{1,\ldots,\ell\} \setminus \{ i \}$.\\
$\bar{y}:=\frac{1}{y_i^0}y^0$. $\Rightarrow
\bar{y}_j=\frac{1}{y_i^0}y^0_j>-\frac{1}{K}>-t_i
\quad
\forall j\in \{1,\ldots,\ell\} \setminus \{ i \}$.\\
$\Rightarrow \bar{y}\in y^i+(\mathbb{R}_+^\ell\setminus \{ 0 \})\subseteq
H+\operatorname*{int}H\subseteq \operatorname*{int}H$.
$\Rightarrow$ since $H$ is a cone: $y^0\in \operatorname*{int}H$.\\
Hence $D^K\subseteq \operatorname*{int}H$. $\Rightarrow
\operatorname*{WMin}(F,H)\subseteq
\operatorname*{Min}(F,D^K)$.
\end{proof}

Proper efficiency according to Geoffrion is also related to efficiency w.r.t. the convex cones
\[ C^p:=\{ y\in \mathbb{R}^\ell  \mid  py_i+\sum\limits_{{j=1 \atop j\neq
i}}^\ell y_j\geq 0\quad\forall i\in \{1,\ldots ,\ell\}\}
\mbox{ with } p\in \mathbb{R}_>. \]

\begin{lemma}\label{hab-l522}
\begin{itemize}
\item[]
\item[\rm (a)] $1\leq p<\bar{p} \Rightarrow C^{\bar{p}}\setminus\{0\}
\subseteq int\,C^p$.
\item[\rm (b)] $0<p<1 \Rightarrow \exists \bar{p}>1,\tilde{p}>1:\quad
C^{\tilde{p}}\subseteq C^p \subseteq C^{\bar{p}}$.
\item[\rm (c)] $\forall \bar{p}>0 \,:\; 
\mathbb{R}_+^\ell =\bigcap\limits_{p\geq\bar{p}}C^p, \; \mathbb{R}_+^\ell\setminus\{0\}=\bigcap\limits_{p\geq\bar{p}}
int\,C^p$.
\item[\rm (d)] $\forall p\in\mathbb{R}_> \;\exists K\in\mathbb{R}_> \;\forall \tilde{K}\geq K
\,:\; D^{\tilde{K}}\subseteq C^p$.
\item[\rm (e)] $\forall K\in\mathbb{R}_> \,:\; C^p\setminus\{0\}\subseteq D^K$ for $p=\ell K$.
\end{itemize}
\end{lemma}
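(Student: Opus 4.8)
The plan is to verify the five assertions essentially by direct computation with the linear inequalities defining $C^p$ and $D^{i,K}$, exploiting the symmetry of $C^p$ in the coordinates. Throughout, recall that $y\in C^p$ means $p y_i+\sum_{j\ne i}y_j\ge 0$ for every $i$; equivalently, writing $\sigma:=\sum_{j=1}^\ell y_j$, the condition is $(p-1)y_i+\sigma\ge 0$ for all $i$, i.e. $(p-1)\min_i y_i+\sigma\ge 0$. This reformulation makes (a), (b), (c) almost immediate.

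For (a), if $1\le p<\bar p$ and $y\in C^{\bar p}\setminus\{0\}$, then $(\bar p-1)\min_i y_i+\sigma\ge 0$; one checks that $\min_i y_i\le 0$ is impossible to combine with $y\ne 0$ and $\sigma$ too small, so in fact $(p-1)\min_i y_i+\sigma>0$ strictly (the only way equality could be approached forces $y=0$), giving $y\in\operatorname*{int}C^p$; here one uses that strict inequality in all $\ell$ defining functionals characterizes the interior of the polyhedral cone $C^p$. For (b), given $0<p<1$, pick $\bar p>1$ large enough that $(\bar p-1)\ge \ell(1-p)$, say; then for $y\in C^p$ one has $\sigma\ge (1-p)(-\min_i y_i)\ge \frac{1-p}{\ell}\sigma'$-type estimates showing $y\in C^{\bar p}$, and symmetrically one finds $\tilde p>1$ with $C^{\tilde p}\subseteq C^p$ using that shrinking $p$ toward $1$ enlarges the cone only up to a controlled amount. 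For (c), the intersection $\bigcap_{p\ge\bar p}C^p$ is described by $(p-1)\min_i y_i+\sigma\ge 0$ for all $p\ge\bar p$; letting $p\to\infty$ this forces $\min_i y_i\ge 0$, i.e. $y\in\mathbb{R}_+^\ell$, and conversely $\mathbb{R}_+^\ell\subseteq C^p$ for every $p>0$; the interior statement follows the same way using the characterization of $\operatorname*{int}C^p$ from (a) together with $\operatorname*{int}\mathbb{R}_+^\ell=\mathbb{R}_+^\ell\setminus\{0\}$ is \emph{not} quite right, so one instead argues $\bigcap_{p\ge\bar p}\operatorname*{int}C^p=\mathbb{R}_+^\ell\setminus\{0\}$ directly: a point with all coordinates $\ge0$ and not all zero lies in every $\operatorname*{int}C^p$ by (a) applied with a smaller $p$, and conversely such a point must have $\min_i y_i\ge0$ and cannot be $0$.

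For (d) and (e) we compare $D^K$ (a union of the $D^{i,K}$) with $C^p$. For (e), take $p=\ell K$ and $y\in C^p\setminus\{0\}$; then $(\ell K-1)\min_i y_i+\sigma\ge 0$. Let $i_0$ attain $\min_i y_i$. If $y_{i_0}>0$ then $y\in D^{i_0,K}$ trivially since $y_i+Ky_j\ge y_{i_0}+Ky_{i_0}>0$; if $y_{i_0}\le 0$, one shows using $\sigma\ge(1-\ell K)y_{i_0}=(\ell K-1)(-y_{i_0})$ and $|\{j\}|=\ell-1$ that some index $n$ has $y_n$ large enough that $y\in D^{n,K}$, so $y\in D^K$. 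For (d), conversely, given $p$ choose $K\ge p$ (and $K\ge1$); for $y\in D^{i,K}$ we have $y_i>0$ and $y_i+Ky_j>0$, whence $y_j>-y_i/K\ge -y_i/p$, and summing $p y_i+\sum_{j\ne i}y_j>p y_i-(\ell-1)y_i/p$, which is $\ge0$ once $K$ (hence the available slack) is chosen large; checking the defining inequality of $C^p$ for the other indices $n\ne i$ uses $y_n>-y_i/K$ and $y_i>0$ similarly. Monotonicity in $K$ (from Proposition \ref{hab-s521}, $D^{\tilde K}\subseteq D^K$ for $\tilde K\ge K$) then upgrades this to ``for all $\tilde K\ge K$''.

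The main obstacle is bookkeeping in (d) and (e): one has to handle the case $\min_i y_i\le 0$ carefully, because then the ``witness'' index for membership in $D^{n,K}$ is not $i_0$ but some index with a large positive coordinate, and identifying it and checking all $\ell-1$ inequalities $y_n+Ky_j>0$ requires the precise relation $p=\ell K$ (the factor $\ell$ comes exactly from bounding $\sigma$ below by $(\ell-1)$ copies of the most negative coordinate plus one large positive one). I would treat (e) first and extract from its proof the exact inequality chain needed, then run it in reverse for (d). Everything else reduces to the single scalar reformulation $(p-1)\min_i y_i+\sum_j y_j\ge 0$ and the standard fact that the (topological) interior of a polyhedral cone is where all the defining linear functionals are strictly positive.
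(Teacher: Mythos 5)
Your outlines for (a), (c) and (d) track the paper's own computations and are fine in substance, but part (b) as planned would fail, and the failure is structural rather than cosmetic. First, the scalar reformulation you lean on, $y\in C^p\iff(p-1)\min_i y_i+\sigma\ge 0$ with $\sigma:=\sum_j y_j$, is only valid for $p\ge 1$; for $0<p<1$ the binding constraint sits at $\max_i y_i$, and (b) lives exactly in that regime. More seriously, ``pick $\bar p>1$ large enough'' goes in the wrong direction: by your own part (a) the cones $C^{\bar p}$, $\bar p\ge 1$, are nested decreasing in $\bar p$, and by (c) they shrink down to $\mathbb{R}_+^\ell$, so for a fixed $p<1$ the inclusion $C^p\subseteq C^{\bar p}$ holds only for $\bar p$ in a bounded range and fails for every large $\bar p$. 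Concretely, for $\ell=2$, $p=\tfrac12$ one has $C^{1/2}=C^2$, and $C^2\not\subseteq C^3$ (witness $(1,-\tfrac12)^T$), although $\bar p=3$ satisfies your criterion $\bar p-1\ge\ell(1-p)$. The auxiliary estimate ``$\sigma\ge(1-p)(-\min_i y_i)$ for $y\in C^p$'' is also false: a single defining inequality only yields $\sigma\ge(1-p)\min_i y_i$, which is a negative bound when $\min_i y_i<0$. To obtain a positive lower bound on $\sigma$ in terms of $|\min_i y_i|$ you must aggregate constraints; the paper does this by summing the defining inequalities over all $i\ne m$, which gives $C^{p_0}\subseteq C^{p_{00}}$ with $p_{00}=\frac{\ell-1}{p_0+\ell-2}$. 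With $p_0=p<1$ this produces the admissible $\bar p=\frac{\ell-1}{p+\ell-2}>1$, and solving $p_{00}=p$ gives $\tilde p=\frac{\ell-1-(\ell-2)p}{p}>1$ with $C^{\tilde p}\subseteq C^p$. That aggregation step (or an equivalent) is the missing idea in your plan.

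In (e) your overall strategy (the witness index is one with a large positive coordinate) is the paper's, but the inequality chain you hint at does not close: from the constraint at the argmin index you get $\sigma\ge(\ell K-1)|y_{i_0}|$, and combined with $\sigma\le\ell\max_i y_i$ this only yields $\max_i y_i\ge\bigl(K-\tfrac1\ell\bigr)|y_{i_0}|$, which falls short of the needed $\max_i y_i>K|y_j|$ for every $j$. The paper instead uses the constraint at each index $i\ne n$ separately, with $n$ the argmax (you must also justify $\max_i y_i>0$): $py_i\ge-\sum_{j\ne i}y_j\ge-(\ell-1)\max_i y_i>-\ell\max_i y_i$, hence $y_i>-\tfrac{\ell}{p}\max_i y_i=-\tfrac1K\max_i y_i$; this per-index bound is exactly where $p=\ell K$ enters. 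For (d), your displayed bound via $-y_i/p$ would require $p^2\ge\ell-1$ and so does not suffice by itself; you need explicit thresholds such as the paper's $K>\max\bigl(\frac{\ell-1}{p},\,p+\ell-2\bigr)$, after which the upgrade to all $\tilde K\ge K$ via the monotonicity in Proposition \ref{hab-s521} is as you say.
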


\begin{proof}
\begin{itemize}
\item[]
\item[\rm (a)] Assume $ 1\leq p<\bar{p}$, $ y\in
C^{\bar{p}}\setminus\{0\}$.
$\Rightarrow\quad \bar{p}y_i+\sum\limits_{{j=1 \atop j\neq
i}}^\ell y_j\geq 0\quad\forall i \in \{1,\ldots ,\ell\}$.
Suppose $\sum\limits_{j=1}^\ell y_j\leq 0$.
$\Rightarrow 0\leq\bar{p}y_i+\sum\limits_{{j=1 \atop j\neq
i}}^\ell y_j=(\bar{p}-1)y_i+\sum\limits_{j=1}^\ell y_j\le (\bar{p}-1)y_i
\quad\forall i \in \{1,\ldots ,\ell\}$.
$\Rightarrow y_i\geq 0\quad\forall i \in \{1,\ldots ,\ell\}$, a contradiction to the supposition because of $y\neq 0$.
$\Rightarrow \sum\limits_{j=1}^\ell y_j>0$.
If $y_i\geq 0$,
we get
$py_i+\sum\limits_{{j=1 \atop j\neq i}}^\ell y_j\geq
\sum\limits_{j=1}^\ell y_j>0$.
For $y_i<0$, we get
$py_i+\sum\limits_{{j=1 \atop j\neq
i}}^\ell y_j>\bar{p}y_i+\sum\limits_{{j=1 \atop j\neq
i}}^\ell y_j\geq 0$.
$\Rightarrow y\in \operatorname*{int}C^p$.
\item[\rm (b)] Assume $p_0\in \mathbb{R}_>$, $y\in C^{p_0}$.
$\Rightarrow p_0y_i+ \sum\limits_{{j=1 \atop j\neq i}}^\ell y_j\geq 0
\quad\forall i\in \{1,\ldots ,\ell\}$.\\
Adding these inequalities for $i\neq m\in \{1,\ldots ,\ell\}$
yields: \\
$p_0\sum\limits_{{j=1 \atop j\neq
m}}^\ell y_j+(\ell -1)y_m+(\ell -2)\sum\limits_{{j=1 \atop j\neq m}}^\ell y_j
\geq 0\quad\forall m\in \{1,\ldots ,\ell\}$.\\
$\Rightarrow p_{00}y_m+\sum\limits_{{j=1 \atop j\neq m}}^\ell y_j\geq
0\quad\forall m\in \{1,\ldots ,\ell\}\qquad$
with $p_{00}:=\frac{\ell -1}{p_0+(\ell -2)}$.\\
$\Rightarrow y\in C^{p_{00}}$, thus $C^{p_0}\subseteq C^{p_{00}}$.\\
Consider first $p_0:=p<1$. $\Rightarrow \bar{p}:=p_{00}>1$.\\
Consider now $\tilde{p}:=p_0:=\frac{\ell -1-(\ell -2)p}{p}>1$.
$\Rightarrow p_{00}=p$.
\item[\rm (c)] 
Take any $y\notin \mathbb{R}_+^\ell$. $\Rightarrow \exists
i\in\{1,\ldots,\ell\} \, :\; y_i<0$. For $p> \max(\bar{p},-\frac{1}{y_i}\sum\limits_{{j=1
\atop j\neq i}}^\ell y_j)$, we have
$py_i+\sum\limits_{{j=1 \atop j\neq i}}^\ell y_j < 0$ and
hence $y\notin C^p$. Thus, $\bigcap\limits_{p\geq\bar{p}}C^p\subseteq \mathbb{R}_+^\ell$.
Since $\mathbb{R}_+^\ell\subseteq C^p$ for each $p\in\mathbb{R}_>$, we get $\bigcap\limits_{p\geq\bar{p}}C^p= \mathbb{R}_+^\ell$.\\
Since $\operatorname*{int}C^p\subseteq C^p\setminus\{0\}$ for each $p\in\mathbb{R}_>$, we get $\bigcap\limits_{p\geq\bar{p}}
int\,C^p\subseteq \mathbb{R}_+^\ell\setminus\{0\}$. Because $\mathbb{R}_+^\ell\setminus\{0\}\subseteq \operatorname*{int}C^p$ holds for each $p\in\mathbb{R}_>$, we get the assertion.
\item[\rm (d)] Assume $p>0$. Choose $K>\max(\frac{\ell -1}{p},\,
p+\ell -2)$. Then,
\begin{equation}
\label{522}
p+(\ell -1)(-\frac{1}{K})>0 \mbox{ and }
p(-\frac{1}{K})+1+(\ell -2)(-\frac{1}{K})>0.
\end{equation}
Consider an arbitrary $y\in D^K$.\\
$\Rightarrow \exists i\in \{1,\ldots,\ell\}:\quad y_i>0$ and
$y_j>-\frac{1}{K}y_i
\quad\forall j\in \{1,\ldots,\ell\} \setminus \{ i \}$.\\
$\Rightarrow$ with (\ref{522}):
$py_i+\sum\limits_{{j=1 \atop j\neq
i}}^\ell y_j>(p+(\ell -1)(-\frac{1}{K}))y_i>0$
and $\forall n\in \{1,\ldots,\ell\}\setminus\{i\}:\\
py_n+\sum\limits_{{j=1
\atop j\neq n}}^\ell y_j=
py_n+y_i+\sum\limits_{{j=1 \atop j\notin
\{n,i\} }}^\ell y_j>(p(-\frac{1}{K})+1+(\ell -2)(-\frac{1}{K}))y_i>0$.\\
$\Rightarrow \forall n\in \{1,\ldots,\ell\}:\quad
py_n+\sum\limits_{{j=1 \atop j\neq n}}^\ell y_j>0$.
$\Rightarrow y\in C^p$.\\
Hence, $D^K\subseteq C^p$. By Proposition \ref{hab-s521}, $D^{\tilde{K}}\subseteq D^K$ for all $\tilde{K}\geq K$.
Thus, $D^{\tilde{K}}\subseteq C^p$ for all $\tilde{K}\geq K$.
\item[\rm (e)] Consider $p=\ell K$.
Take any $y\in C^p\setminus \{ 0 \}$.
$\Rightarrow py_i+\sum\limits_{{j=1 \atop j\neq i}}^\ell y_j\geq
0\quad\forall i\in \{1,\ldots,\ell\}$.
$\Rightarrow y_n:=\max\limits_{i\in \{1,\ldots,\ell\} }\, y_i\, >0$.
$\Rightarrow \sum\limits_{{j=1 \atop j\neq i}}^\ell y_j\leq (\ell -1)y_n<\ell y_n
\quad\forall i\in \{1,\ldots,\ell\} \setminus \{ n \}$.\\
$\Rightarrow y_i+\frac{\ell }{p}\, y_n>
y_i+\frac{1}{p}\sum\limits_{{j=1 \atop j\neq i}}^\ell y_j\geq 0
\quad\forall i\in \{1,\ldots,\ell\} \setminus \{ n \}$.\\
$\Rightarrow y_n+Ky_i=y_n+\frac{p}{\ell }\, y_i> 0
\quad\forall i\in \{1,\ldots,\ell\} \setminus \{ n \}$.
$\Rightarrow y\in D^K$.\\
Thus, $C^p\setminus \{ 0 \}\subseteq D^K$.
\end{itemize}
\end{proof}

This implies by Proposition \ref{hab-s523}:

\begin{corollary}\label{c-pol-cone}
For each convex cone $H$ with $\mathbb{R}_+^\ell\setminus \{ 0 \}\subseteq \operatorname*{int}H$, there exists some polyhedral cone $C=C^p$, $p\in\mathbb{R}_>$, with $\mathbb{R}_+^\ell\setminus \{ 0 \}\subseteq \operatorname*{int}C$ and $C\setminus \{ 0 \}\subseteq \operatorname*{int}H$
. Then $\operatorname*{WMin}(F,H)\subseteq
\operatorname*{Min}(F,C)$.
\end{corollary}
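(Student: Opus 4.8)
The plan is to deduce the statement by combining Proposition \ref{hab-s523} with Lemma \ref{hab-l522}(e), using the latter to replace the set $D^K$ furnished by the former with a genuine polyhedral cone $C^p$. First I would take a convex cone $H$ with $\mathbb{R}_+^\ell\setminus\{0\}\subseteq\operatorname*{int}H$ and apply Proposition \ref{hab-s523} to obtain some $K\in\mathbb{R}_>$ with $D^K\subseteq\operatorname*{int}H$ and, already, $\operatorname*{WMin}(F,H)\subseteq\operatorname*{Min}(F,D^K)$.

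Next I would set $p:=\ell K\in\mathbb{R}_>$ and $C:=C^p$, which is a polyhedral cone since it is cut out by the $\ell$ linear inequalities defining $C^p$. It then remains to verify the three properties claimed for $C$. By Lemma \ref{hab-l522}(e) applied with this $K$, we have $C\setminus\{0\}=C^p\setminus\{0\}\subseteq D^K$, hence $C\setminus\{0\}\subseteq\operatorname*{int}H$. The inclusion $\mathbb{R}_+^\ell\setminus\{0\}\subseteq\operatorname*{int}C^p$ holds for every $p\in\mathbb{R}_>$ — this is recorded in the proof of Lemma \ref{hab-l522}(c) and is immediate from the fact that the defining inequalities of $C^p$ are strict on $\mathbb{R}_+^\ell\setminus\{0\}$ — so in particular $\operatorname*{int}C\neq\emptyset$, and $C$ is a polyhedral cone of the required kind.

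Finally, for the efficient-point-set inclusion I would use that $\operatorname*{Min}(F,\cdot)$ is antitone with respect to inclusion of domination sets and that $\operatorname*{Min}(F,A)=\operatorname*{Min}(F,A\setminus\{0\})$ for every $A\subseteq Y$, both of which follow at once from the definition of $\operatorname*{Min}$ (here it is relevant that $0\notin D^K$). From $C\setminus\{0\}\subseteq D^K$ this gives $\operatorname*{Min}(F,D^K)=\operatorname*{Min}(F,D^K\setminus\{0\})\subseteq\operatorname*{Min}(F,C\setminus\{0\})=\operatorname*{Min}(F,C)$, and chaining with the inclusion from Proposition \ref{hab-s523} yields $\operatorname*{WMin}(F,H)\subseteq\operatorname*{Min}(F,C)$, as asserted. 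The argument is essentially bookkeeping on top of the two preceding results, so I do not anticipate a genuine obstacle; the one point needing care is the direction of the inclusions — because a smaller domination set produces a larger efficient set, it is precisely $C^p\setminus\{0\}\subseteq D^K$ (and not the reverse) that is needed — together with confirming that the concrete choice $p=\ell K$ still keeps $\mathbb{R}_+^\ell\setminus\{0\}$ inside $\operatorname*{int}C^p$.
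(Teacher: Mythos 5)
Your proposal is correct and matches the paper's intended derivation: the corollary is stated there as an immediate consequence of Proposition \ref{hab-s523} combined with Lemma \ref{hab-l522}(e) (choosing $p=\ell K$), exactly as you do, including the correct direction of the antitonicity argument and the harmless removal of $0$.
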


\begin{proposition}\label{hab-s522}
$\operatorname*{GMin}(F)=\bigcup\limits_{p>0}\operatorname*{Min}(F,C^p)=\bigcup\limits_{p\geq
\bar{p}}\operatorname*{Min}(F,C^p)  \quad
\forall \bar{p}\geq 1$.\\
Here, $\operatorname*{Min}(F,C^{\bar{p}})\subseteq \operatorname*{Min}(F,C^{p})
\quad\forall
1\leq \bar{p}\leq p\qquad $
and \\
$\forall p\in(0,1):\quad\exists \bar{p}>1,\, \tilde{p}>1:\quad
\operatorname*{Min}(F,C^{\bar{p}})\subseteq \operatorname*{Min}(F,C^{p})\subseteq
\operatorname*{Min}(F,C^{\tilde{p}})$.
\end{proposition}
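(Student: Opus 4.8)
The plan is to reduce the proposition entirely to Lemma \ref{hab-l522} and Proposition \ref{hab-s521}, using the elementary monotonicity fact that $D_1\subseteq D_2$ implies $\operatorname*{Min}(F,D_2)\subseteq\operatorname*{Min}(F,D_1)$ (a larger domination set admits fewer efficient points). First I would dispose of the two monotonicity assertions, since they are immediate. For $1\le\bar p\le p$, Lemma \ref{hab-l522}(a) gives $C^p\setminus\{0\}\subseteq\operatorname*{int}C^{\bar p}\subseteq C^{\bar p}$, and together with $0\in C^{\bar p}$ this yields $C^p\subseteq C^{\bar p}$, hence $\operatorname*{Min}(F,C^{\bar p})\subseteq\operatorname*{Min}(F,C^p)$. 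For $p\in(0,1)$, Lemma \ref{hab-l522}(b) provides $\tilde p>1$ and $\bar p>1$ with $C^{\tilde p}\subseteq C^p\subseteq C^{\bar p}$, and the same monotonicity gives the sandwich $\operatorname*{Min}(F,C^{\bar p})\subseteq\operatorname*{Min}(F,C^p)\subseteq\operatorname*{Min}(F,C^{\tilde p})$.

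For the first equality I would argue both inclusions separately. To see $\operatorname*{GMin}(F)\subseteq\bigcup_{p>0}\operatorname*{Min}(F,C^p)$, take $y^0\in\operatorname*{GMin}(F)$; by Proposition \ref{hab-s521} there is $K>0$ with $y^0\in\operatorname*{Min}(F,D^K)$. Put $p:=\ell K>0$; Lemma \ref{hab-l522}(e) gives $C^p\setminus\{0\}\subseteq D^K$, so $y^0-C^p\subseteq(y^0-D^K)\cup\{y^0\}$, and intersecting with $F$ shows $F\cap(y^0-C^p)\subseteq\{y^0\}$, i.e. $y^0\in\operatorname*{Min}(F,C^p)$. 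For the reverse inclusion, fix $p>0$; Lemma \ref{hab-l522}(d) supplies $K>0$ with $D^K\subseteq C^p$, whence $\operatorname*{Min}(F,C^p)\subseteq\operatorname*{Min}(F,D^K)\subseteq\bigcup_{K'>0}\operatorname*{Min}(F,D^{K'})=\operatorname*{GMin}(F)$ by Proposition \ref{hab-s521}.

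Finally, for $\bigcup_{p\ge\bar p}\operatorname*{Min}(F,C^p)=\operatorname*{GMin}(F)$ with $\bar p\ge1$: the inclusion $\bigcup_{p\ge\bar p}\operatorname*{Min}(F,C^p)\subseteq\bigcup_{p>0}\operatorname*{Min}(F,C^p)=\operatorname*{GMin}(F)$ is trivial. Conversely, the monotonicity established above shows that the family $\{\operatorname*{Min}(F,C^p)\}_{p\ge1}$ is nondecreasing in $p$, so its union over $[\bar p,\infty)$ equals its union over $[1,\infty)$; moreover every $\operatorname*{Min}(F,C^p)$ with $0<p<1$ is contained in some $\operatorname*{Min}(F,C^{\bar p'})$, $\bar p'>1$, by the sandwich above. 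Hence $\bigcup_{p>0}\operatorname*{Min}(F,C^p)=\bigcup_{p\ge1}\operatorname*{Min}(F,C^p)=\bigcup_{p\ge\bar p}\operatorname*{Min}(F,C^p)$, which completes the proof.

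The entire argument is routine bookkeeping, and I do not anticipate any genuine obstacle: all the substantive geometry of the polyhedral cones $C^p$ (their mutual inclusions and their relation to the cones $D^K$) was already packaged into Lemma \ref{hab-l522}, and the link between $D^K$ and $\operatorname*{GMin}(F)$ is Proposition \ref{hab-s521}. The only points requiring care are keeping the directions of all inclusions straight — in particular that a larger domination set produces a smaller efficient set — and remembering to re-adjoin the origin when passing from $C^p\setminus\{0\}$ to $C^p$.
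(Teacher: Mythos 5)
Your proof is correct and follows essentially the same route as the paper: Lemma \ref{hab-l522}(d) gives $\operatorname*{Min}(F,C^p)\subseteq\operatorname*{GMin}(F)$, Lemma \ref{hab-l522}(e) with $p=\ell K$ together with Proposition \ref{hab-s521} gives the converse, and parts (a),(b) yield the monotonicity statements. The only cosmetic difference is that the paper obtains the union over $p\geq\bar p$ directly by choosing $K>\bar p$ (using the nestedness of the $D^K$), whereas you deduce it afterwards from the nestedness of the sets $\operatorname*{Min}(F,C^p)$ in $p$; both are immediate consequences of the same lemmas.
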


\begin{proof}
\begin{itemize}
\item[]
\item[\rm (a)] The inclusions between the efficient point sets in the second part of the proposition result from Lemma \ref{hab-l522}.
\item[\rm (b)] Assume $p>0$. By Lemma \ref{hab-l522}, there exists some $K>0$ with
$\operatorname*{Min}(F,C^p)\subseteq \operatorname*{Min}(F,D^K)$. Proposition \ref{hab-s521}
implies $\operatorname*{Min}(F,C^p)\subseteq \operatorname*{GMin}(F)$.
\item[\rm (c)] Assume $\bar{y}\in \operatorname*{GMin}(F)$, $\bar{p}\geq 1$.\\
$\Rightarrow \exists K>\bar{p}:\quad \bar{y}\in \operatorname*{Min}(F,D^K)$.
$\quad p:=\ell K>\bar{p}$.
By Lemma \ref{hab-l522}, $C^p\setminus \{ 0 \}\subseteq D^K$.
$\Rightarrow \operatorname*{Min}(F,D^K)\subseteq \operatorname*{Min}(F,C^p)$.
$\Rightarrow \bar{y}\in \operatorname*{Min}(F,C^p)$. 
\end{itemize}
\end{proof}

\begin{corollary}\label{c-geoff-poly}
The following statements are equivalent to each other:
\begin{itemize}
\item[\rm (a)] $y\in\operatorname*{GMin}(F)$.
\item[\rm (b)] $y\in \operatorname*{Min}(F,H)$ for some polyhedral cone $H$ with $\mathbb{R}^\ell _+\setminus\{0\}\subseteq\operatorname*{int}H$.
\item[\rm (c)] $y\in\operatorname*{WMin}(F,H)$ for some polyhedral cone $H$ with $\mathbb{R}^\ell _+\setminus\{0\}\subseteq \operatorname*{int}H$.
\end{itemize}
\end{corollary}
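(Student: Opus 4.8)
The plan is to derive this corollary from the already-established equivalences in Proposition \ref{p-Geoff-cones} together with Corollary \ref{c-pol-cone}, exactly as Corollary \ref{c-pol-cone} was obtained by combining Proposition \ref{hab-s523} with Lemma \ref{hab-l522}. The key observation is that every polyhedral cone $C^p$ with $p\in\mathbb{R}_>$ is a convex cone, and by Lemma \ref{hab-l522}(a) (or directly from the definition for $p\geq 1$) we have $\mathbb{R}_+^\ell\setminus\{0\}\subseteq\operatorname*{int}C^p$ for every $p\geq 1$; so the class of cones appearing in statements (b) and (c) of the present corollary is a subclass of the cones appearing in statements (b), (c), (d) of Proposition \ref{p-Geoff-cones}. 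This makes the two ``hard'' implications of Proposition \ref{p-Geoff-cones} do all the work.

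First I would prove the cycle (b) $\Rightarrow$ (a). If $y\in\operatorname*{Min}(F,H)$ for a polyhedral cone $H$ with $\mathbb{R}_+^\ell\setminus\{0\}\subseteq\operatorname*{int}H$, then in particular $H$ is a convex cone with that interior property, so statement (b) of Proposition \ref{p-Geoff-cones} holds and hence $y\in\operatorname*{GMin}(F)$. Likewise (c) $\Rightarrow$ (a): a polyhedral cone $H$ with $\mathbb{R}_+^\ell\setminus\{0\}\subseteq\operatorname*{int}H$ is a convex cone with that property, so statement (c) of Proposition \ref{p-Geoff-cones} is satisfied and again $y\in\operatorname*{GMin}(F)$. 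Also (b) $\Rightarrow$ (c) is immediate from $\operatorname*{Min}(F,H)\subseteq\operatorname*{WMin}(F,H)$.

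It remains to show (a) $\Rightarrow$ (b). Suppose $y\in\operatorname*{GMin}(F)$. By Proposition \ref{p-Geoff-cones}, $y\in\operatorname*{WMin}(F,H)$ for some convex cone $H$ with $\mathbb{R}_+^\ell\setminus\{0\}\subseteq\operatorname*{int}H$. Apply Corollary \ref{c-pol-cone}: there exists a polyhedral cone $C=C^p$, $p\in\mathbb{R}_>$, with $\mathbb{R}_+^\ell\setminus\{0\}\subseteq\operatorname*{int}C$ (this also follows from Lemma \ref{hab-l522}(c)) and $\operatorname*{WMin}(F,H)\subseteq\operatorname*{Min}(F,C)$. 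Hence $y\in\operatorname*{Min}(F,C)$, which is statement (b). This closes the chain (a) $\Rightarrow$ (b) $\Rightarrow$ (c) $\Rightarrow$ (a) together with (b) $\Rightarrow$ (c), establishing the equivalence of all three statements.

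I do not anticipate a genuine obstacle here: the only mild point to be careful about is making sure the polyhedral cone supplied by Corollary \ref{c-pol-cone} genuinely contains $\mathbb{R}_+^\ell\setminus\{0\}$ in its interior, which is exactly part of the conclusion of that corollary (and is independently guaranteed by Lemma \ref{hab-l522}(c) for every $p\geq 1$, and by Lemma \ref{hab-l522}(a),(b) for $0<p<1$). Everything else is a routine repackaging of the preceding propositions, so the proof is short.
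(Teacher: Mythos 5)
Your proof is correct; the only place where it departs from the paper is the implication (a) $\Rightarrow$ (b). The paper obtains this in one step from Proposition \ref{hab-s522}, which already states $\operatorname*{GMin}(F)=\bigcup_{p>0}\operatorname*{Min}(F,C^p)$ with the polyhedral cones $C^p$ satisfying $\mathbb{R}^\ell_+\setminus\{0\}\subseteq\operatorname*{int}C^p$, whereas you first pass to a general convex cone via Proposition \ref{p-Geoff-cones} and then shrink it to a polyhedral cone $C^p$ by Corollary \ref{c-pol-cone}. Both routes ultimately rest on the same machinery (the cones $D^K$ of Proposition \ref{hab-s521} and the inclusions of Lemma \ref{hab-l522}), since Corollary \ref{c-pol-cone} is itself the combination of Proposition \ref{hab-s523} with Lemma \ref{hab-l522}(e); so your argument is a correct repackaging rather than a new idea. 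The paper's citation of Proposition \ref{hab-s522} is slightly more economical and carries the extra information that $p$ can be taken arbitrarily large ($p\ge\bar p$ for any $\bar p\ge 1$), while your chain through the Henig-type characterization makes explicit that any convex cone witnessing weak efficiency can be replaced by a polyhedral one, which is a useful way to see the result. Your remaining implications ((b) $\Rightarrow$ (c) trivially, and (b),(c) $\Rightarrow$ (a) via Proposition \ref{p-Geoff-cones}) coincide with the paper's, with the harmless redundancy that you verify (b) $\Rightarrow$ (a) directly instead of only closing the cycle; the small justification you flag — that $\mathbb{R}^\ell_+\setminus\{0\}\subseteq\operatorname*{int}C^p$ — is indeed part of the conclusion of Corollary \ref{c-pol-cone} (and holds for every $p\in\mathbb{R}_>$, as noted in the proof of Lemma \ref{hab-l522}(c)), so there is no gap.
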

\begin{proof}
(a) implies (b) by Proposition \ref{hab-s522}. (b) implies (c) because of 
$\operatorname*{Min}(F,H)\subseteq \operatorname*{WMin}(F,H)$. (c) implies $y\in\operatorname*{GMin}(F)$ by Proposition \ref{p-Geoff-cones}.
\end{proof}

Proposition \ref{hab-s522} implies the following statement by Podinovskij and Nogin \cite[Theorem 2.1.15.]{pono82}.

\begin{corollary}\label{hab-c521}
$\operatorname*{GMin}(F)=\bigcup\limits_{\epsilon \in
(0,\frac{1}{\ell })}\operatorname*{Min}(F,\Lambda_{\epsilon })$
with \\
$\Lambda_{\epsilon }:=\{ y\in \mathbb{R}^\ell  \mid  (1-(\ell -1)\epsilon
)y_i+\epsilon
\sum\limits_{{j=1 \atop j\neq i}}^\ell y_j\geq 0 \quad\forall
i\in \{1,\ldots,\ell\}\}$.
\end{corollary}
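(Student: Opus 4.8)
The plan is to identify each set $\Lambda_{\epsilon}$ with one of the polyhedral cones $C^{p}$ occurring in Proposition \ref{hab-s522}, and then to match the ranges of the two parameters.

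First I would fix $\epsilon\in(0,\tfrac{1}{\ell})$ and divide each of the $\ell$ inequalities defining $\Lambda_{\epsilon}$ by $\epsilon>0$. Since scaling all coordinates of a point by a fixed positive factor preserves both membership in a cone and the validity of a homogeneous linear inequality, this shows
\[
\Lambda_{\epsilon}=\Bigl\{\,y\in\mathbb{R}^{\ell}\;\Big|\;p(\epsilon)\,y_i+\sum\limits_{{j=1\atop j\neq i}}^{\ell}y_j\geq 0\ \ \forall i\in\{1,\ldots,\ell\}\Bigr\}=C^{\,p(\epsilon)},\qquad p(\epsilon):=\frac{1}{\epsilon}-(\ell-1),
\]
and $p(\epsilon)>1>0$ for $\epsilon\in(0,\tfrac{1}{\ell})$, so $C^{\,p(\epsilon)}$ is indeed one of the cones of Proposition \ref{hab-s522}.

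Next I would observe that $\epsilon\mapsto p(\epsilon)$ is continuous and strictly decreasing on $(0,\tfrac{1}{\ell})$, with $p(\epsilon)\to+\infty$ as $\epsilon\downarrow 0$ and $p(\tfrac{1}{\ell})=1$; hence it is a bijection of $(0,\tfrac{1}{\ell})$ onto $(1,+\infty)$. Therefore
\[
\bigcup_{\epsilon\in(0,\frac{1}{\ell})}\operatorname*{Min}(F,\Lambda_{\epsilon})=\bigcup_{p>1}\operatorname*{Min}(F,C^{p}).
\]
Finally I would apply Proposition \ref{hab-s522}. Its first equality $\operatorname*{GMin}(F)=\bigcup_{p>0}\operatorname*{Min}(F,C^{p})$ gives $\bigcup_{p>1}\operatorname*{Min}(F,C^{p})\subseteq\operatorname*{GMin}(F)$, and applying the proposition with any $\bar p>1$ (say $\bar p=2$) gives $\operatorname*{GMin}(F)=\bigcup_{p\geq\bar p}\operatorname*{Min}(F,C^{p})\subseteq\bigcup_{p>1}\operatorname*{Min}(F,C^{p})$. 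Combining these two inclusions with the previous display yields the assertion.

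I do not expect a real obstacle. The only points needing a moment's care are that the rescaling by $1/\epsilon$ genuinely turns the system defining $\Lambda_{\epsilon}$ into the system defining $C^{\,p(\epsilon)}$ — which is immediate from the homogeneity of those inequalities — and that the open interval $(0,\tfrac{1}{\ell})$ is carried exactly onto $(1,\infty)$, so the endpoint value $p=1$ is lost; since Proposition \ref{hab-s522} already allows the lower cutoff $\bar p$ to be any number $\geq 1$, dropping $p=1$ does not affect the union.
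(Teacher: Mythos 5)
Your proposal is correct and follows essentially the same route as the paper: identify $\Lambda_{\epsilon}=C^{p}$ with $p=\frac{1-(\ell-1)\epsilon}{\epsilon}=\frac{1}{\epsilon}-(\ell-1)$, note that $\epsilon\in(0,\tfrac{1}{\ell})$ corresponds exactly to $p>1$, and conclude via Proposition \ref{hab-s522}. Your extra remark about the lost endpoint $p=1$ being harmless (by choosing $\bar p>1$) is a careful touch the paper leaves implicit.
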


\begin{proof}
$\Lambda_{\epsilon }=C^p$ with $p=\frac{1-(\ell -1)\epsilon }{\epsilon
}$ and
$\epsilon =\frac{1}{p+\ell -1}$.
$\quad \epsilon \in (0,\frac{1}{\ell })\iff p>1.$
\end{proof}

Proper efficiency according to Geoffrion can also be characterized as efficiency w.r.t. the convex cones
\[ C(s):=\{ y\in \mathbb{R}^\ell  \mid  y_i+s^Ty\geq 0\quad\forall i\in \{1,\ldots,\ell\}\}
\mbox{  with } s\in \operatorname*{int}\mathbb{R}_+^\ell . \]

\begin{lemma}\label{zu-hab-s524}
Assume $s\in \operatorname*{int}\mathbb{R}_+^\ell$.
\begin{itemize}
\item[\rm (a)] $\forall\,m\in\mathbb{R}_>\,:\quad \mathbb{R}_+^\ell\subseteq C(\frac{s}{m}),\quad \mathbb{R}_+^\ell\setminus \{ 0 \}\subseteq \operatorname*{int} C(\frac{s}{m})$.
\item[\rm (b)] $\forall\,m>1 \,:\quad C(\frac{s}{m})\setminus\{0\}\subset D^K$ \quad for \quad $K=\frac{m-1}{\sum\limits_{i=1}^\ell s_i}$.
\end{itemize}
\end{lemma}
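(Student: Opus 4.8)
The plan is to verify each claim by elementary computations with the linear inequalities defining the cones $C(\cdot)$ and $D^K$. For part (a), fix $m\in\mathbb{R}_>$ and write $\tilde s:=s/m$, which still lies in $\operatorname*{int}\mathbb{R}_+^\ell$. First I would check $\mathbb{R}_+^\ell\subseteq C(\tilde s)$: for $y\in\mathbb{R}_+^\ell$ and any $i$, both $y_i\ge 0$ and $\tilde s^Ty=\sum_k\tilde s_k y_k\ge 0$ (a sum of products of nonnegatives), so $y_i+\tilde s^Ty\ge 0$. Next, for $\mathbb{R}_+^\ell\setminus\{0\}\subseteq\operatorname*{int}C(\tilde s)$, I would observe that the defining inequalities are strict on $\mathbb{R}_+^\ell\setminus\{0\}$: if $y\ge 0$, $y\ne 0$, then some $y_n>0$, and since every component $\tilde s_k>0$ we get $\tilde s^Ty\ge\tilde s_n y_n>0$; hence $y_i+\tilde s^Ty>0$ for every $i$. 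A point satisfying all the defining inequalities of a polyhedral cone strictly lies in its topological interior, so $y\in\operatorname*{int}C(\tilde s)$.

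For part (b), fix $m>1$ and set $K=(m-1)/\sum_{i=1}^\ell s_i$, so $K>0$. Take $y\in C(s/m)\setminus\{0\}$; writing $\sigma:=\sum_i s_i$, the hypothesis reads $y_i+\frac1m s^Ty\ge 0$ for all $i$, i.e. $m y_i + s^Ty\ge 0$ for all $i$. I would first argue $s^Ty>0$: summing the inequalities weighted by $s_i$ gives $m\,s^Ty + \sigma\, s^Ty = (m+\sigma)s^Ty \ge 0$ — wait, that only yields $s^Ty\ge 0$; to get strictness I instead note that $s^Ty=0$ together with $m y_i\ge 0$ forces $y\ge 0$, and then $s^Ty=\sum s_i y_i=0$ with all $s_i>0$ forces $y=0$, contradicting $y\ne 0$. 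Hence $s^Ty>0$. Now let $y_n:=\max_i y_i$; since $s^Ty\le\sigma\,y_n$ and $s^Ty>0$ we get $y_n>0$. Finally, for $i\ne n$, from $m y_i + s^Ty\ge 0$ and $s^Ty\le\sigma y_n$ we obtain $m y_i \ge -s^Ty\ge -\sigma y_n$, hence $y_i \ge -\tfrac{\sigma}{m}y_n$, so $y_n + Ky_i \ge y_n - K\tfrac{\sigma}{m}y_n = y_n\bigl(1-\tfrac{\sigma}{m}\cdot\tfrac{m-1}{\sigma}\bigr)=y_n\cdot\tfrac{1}{m}>0$. Together with $y_n>0$ this says $y\in D^{n,K}\subseteq D^K$, as required.

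The routine nature of these manipulations means the only place requiring care is pinning down the strict inequalities needed for the interior statement in (a) and for $s^Ty>0$ and $y_n>0$ in (b); the bookkeeping with the constant $K=(m-1)/\sigma$ has to be done exactly so that the final bound $y_n+Ky_i>0$ comes out with the clean factor $1/m>0$. Apart from that, everything follows from the positivity of the entries of $s$ and the definitions of the cones, with no appeal to earlier results beyond the definition of $D^K$ and $D^{i,K}$.
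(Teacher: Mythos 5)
Your proof is correct and follows essentially the same route as the paper's: both arguments take $y\in C(\frac{s}{m})\setminus\{0\}$, set $y_n:=\max_i y_i>0$, bound $s^Ty\le y_n\sum_i s_i$, and convert the defining inequalities into $y_n+Ky_i>0$ with $K=\frac{m-1}{\sum_i s_i}$. The only cosmetic difference is that you establish $s^Ty>0$ first to get $y_n>0$, whereas the paper deduces $y_n>0$ directly and obtains strictness from $\frac{1}{m}<\frac{1}{m-1}$; part (a) is treated as obvious in the paper, and your justification of it is fine.
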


\begin{proof}
\begin{itemize}
\item[]
\item[\rm (a)] is obvious.
\item[\rm (ii)] Take any $m>1$. $K:=\frac{m-1}{\sum\limits_{i=1}^\ell s_i}$.
Consider an arbitrary $y\in C(\frac{s}{m})\setminus \{ 0 \}$.\\
$\Rightarrow y\neq 0$ and $\frac{s^Ty}{m}+y_i\geq 0\quad\forall
i\in \{1,\ldots,\ell\}$.
$\Rightarrow y_n:=\max\limits_{i\in \{1,\ldots,\ell\} }\, y_i\, >0$.
$\Rightarrow \frac{s^Ty}{m}\leq \frac{1}{m}\sum\limits_{i=1}^\ell s_i\,
y_n<\frac{1}{m-1}\sum\limits_{i=1}^\ell s_i\,
y_n$.
$\Rightarrow \forall i\in \{1,\ldots,\ell \} :\quad
y_i+\frac{1}{m-1}\sum\limits_{i=1}^\ell s_i\, y_n>
y_i+\frac{s^Ty}{m}\geq 0$.\\
$\Rightarrow \forall i\in \{1,\ldots,\ell \} :\quad
y_n+\frac{m-1}{\sum\limits_{i=1}^\ell s_i} y_i> 0$.
$\Rightarrow y\in D^K$.
\end{itemize}
\end{proof}

\begin{proposition}\label{hab-s524}
$\forall s\in \operatorname*{int}\mathbb{R}_+^\ell :\quad
\operatorname*{GMin}(F)=\bigcup\limits_{m>0}\,
\operatorname*{Min}(F,C(\frac{s}{m}))$,\\
where $\forall y\in \operatorname*{GMin}(F)\quad\exists
m_0>1\quad\forall
m\geq m_0:
\quad y\in \operatorname*{Min}(F,C(\frac{s}{m}))$.
\end{proposition}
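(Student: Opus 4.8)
The statement is a set equality together with a strengthening of one inclusion, so I would prove the two inclusions of $\operatorname*{GMin}(F)=\bigcup_{m>0}\operatorname*{Min}(F,C(\frac{s}{m}))$ separately, obtaining the quantified ``$\forall m\ge m_0$'' claim while proving the inclusion ``$\subseteq$''. Both directions are pure bookkeeping with the inclusions already established in Lemma \ref{zu-hab-s524} and Proposition \ref{hab-s521}; nothing new needs to be computed.

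\textbf{The inclusion $\bigcup_{m>0}\operatorname*{Min}(F,C(\frac{s}{m}))\subseteq\operatorname*{GMin}(F)$.} Fix $m>0$ and suppose $y^0\in\operatorname*{Min}(F,C(\frac{s}{m}))$. The set $C(\frac{s}{m})$ is a polyhedral (hence closed) convex cone, being the intersection of the $\ell$ closed half-spaces $\{y\mid y_i+(\frac{s}{m})^Ty\ge 0\}$, each containing $0$ on its boundary; and by Lemma \ref{zu-hab-s524}(a) it satisfies $\mathbb{R}_+^\ell\setminus\{0\}\subseteq\operatorname*{int}C(\frac{s}{m})$. Thus $y^0$ meets condition (b) of Proposition \ref{p-Geoff-cones} (equivalently (b) of Corollary \ref{c-geoff-poly}), so $y^0\in\operatorname*{GMin}(F)$.

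\textbf{The inclusion $\operatorname*{GMin}(F)\subseteq\bigcup_{m>0}\operatorname*{Min}(F,C(\frac{s}{m}))$, with the strengthening.} Let $y^0\in\operatorname*{GMin}(F)$. By Proposition \ref{hab-s521} there is some $K>0$ with $y^0\in\operatorname*{Min}(F,D^K)$. Put $\sigma:=\sum_{i=1}^\ell s_i>0$ (this is positive since $s\in\operatorname*{int}\mathbb{R}_+^\ell$) and set $m_0:=1+K\sigma>1$. For any $m\ge m_0$ we have $m>1$, so Lemma \ref{zu-hab-s524}(b) applies with $\tilde K:=\frac{m-1}{\sigma}$, giving $C(\frac{s}{m})\setminus\{0\}\subseteq D^{\tilde K}$; and since $\tilde K=\frac{m-1}{\sigma}\ge\frac{m_0-1}{\sigma}=K$, Proposition \ref{hab-s521} yields $D^{\tilde K}\subseteq D^K$, hence $C(\frac{s}{m})\setminus\{0\}\subseteq D^K$. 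Because $\operatorname*{Min}$ is antitone in the domination set and adjoining $0$ to a domination set does not affect $\operatorname*{Min}$, it follows that $\operatorname*{Min}(F,D^K)\subseteq\operatorname*{Min}(F,C(\frac{s}{m}))$, so $y^0\in\operatorname*{Min}(F,C(\frac{s}{m}))$ for every $m\ge m_0$. This gives both $y^0\in\bigcup_{m>0}\operatorname*{Min}(F,C(\frac{s}{m}))$ and the quantified assertion of the proposition.

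\textbf{Expected obstacle.} There is no genuine difficulty here; the only thing to watch is the reversal of direction between the set inclusions $C(\frac{s}{m})\setminus\{0\}\subseteq D^{\tilde K}\subseteq D^K$ and the induced inclusions of efficient-point sets $\operatorname*{Min}(F,D^K)\subseteq\operatorname*{Min}(F,C(\frac{s}{m}))$, plus the requirement that $m_0$ be taken strictly larger than $1$ so that Lemma \ref{zu-hab-s524}(b) is applicable throughout the range $m\ge m_0$.
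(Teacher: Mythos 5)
Your proposal is correct and follows essentially the same route as the paper: the inclusion $\bigcup_{m>0}\operatorname*{Min}(F,C(\frac{s}{m}))\subseteq\operatorname*{GMin}(F)$ via Lemma \ref{zu-hab-s524}(a) and Proposition \ref{p-Geoff-cones}, and the reverse inclusion with the same choice $m_0=1+K\sum_{i=1}^\ell s_i$, using Lemma \ref{zu-hab-s524}(b) together with the monotonicity of the sets $D^K$ from Proposition \ref{hab-s521}. The only cosmetic difference is that you keep $K$ fixed and pass through $C(\frac{s}{m})\setminus\{0\}\subseteq D^{\tilde K}\subseteq D^{K}$, whereas the paper invokes $y^0\in\operatorname*{Min}(F,D^K)$ for all $K\geq K_0$; the substance is identical.
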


\begin{proof}
\begin{itemize}
\item[]
\item[\rm (i)] $\mathbb{R}_+^\ell\setminus \{ 0 \}\subseteq \operatorname*{int}
C(\frac{s}{m})\quad\forall s\in \operatorname*{int}\mathbb{R}_+^\ell ,\, m>0$.
Thus, we deduce from  Proposition \ref{p-Geoff-cones}:\\ 
$\bigcup\limits_{m>0}\operatorname*{Min}(F,C(\frac{s}{m}))\subseteq
\operatorname*{GMin}(F)$.
\item[\rm (ii)] Take any $y^0\in \operatorname*{GMin}(F)$, $s\in \operatorname*{int}\mathbb{R}_+^\ell$.\\
Because of Proposition \ref{hab-s521}, there exists some $K_0>0$ such that\\
$y^0\in \operatorname*{Min}(F,D^K)\quad\forall K\geq K_0$.\\
$m_0:=1+K_0\sum\limits_{i=1}^\ell s_i>1$.
Take any $m\geq m_0$. $K:=\frac{m-1}{\sum\limits_{i=1}^\ell s_i}\geq K_0$, and $C(\frac{s}{m})\setminus \{ 0 \}\subseteq D^K$ by Lemma \ref{zu-hab-s524}. 
Hence, $\operatorname*{Min}(F,D^K)\subseteq \operatorname*{Min}(F,C(\frac{s}{m}))$.\\
$\Rightarrow y^0\in \operatorname*{Min}(F,C(\frac{s}{m}))\quad\forall m\geq m_0$.
\end{itemize}
\end{proof}

Efficiency and proper efficiency according to Geoffrion coincide for linear vector optimization problems
(see, e.g., \cite{fock73}). Consequently, for these problems the following statement of Helbig  \cite{helb90e}
is stronger than the above proposition.\\
For any $s\in \operatorname*{int}\mathbb{R}_+^\ell$, there exists some $m_0\in \mathbb{N}$ such that for all $m\geq m_0$ \,: 
\[\operatorname*{WMin}(F,C(\frac{s}{m}))=\operatorname*{Min}(F,C(\frac{s}{m}))=\operatorname*{Min}(F).\]
Here, the constant $m_0$ can be chosen independently from the considered efficient element. That this is in general not the case for other than linear vector optimization problems can be illustrated by a simple example:

\begin{example}\label{hab-e521}
Consider
$F=\{ (y_1,y_2)^T\in \mathbb{R}^2 \mid  -1\leq y_1\leq 0,\, -y_1\leq y_2\leq 1\}
\cup \{ (y_1,y_2)^T\in \mathbb{R}^2 \mid  0\leq y_1\leq 1,\, -1\leq y_2\leq 1 \}$.
$\operatorname*{GMin}(F)=\{ (0,-1)^T\} \cup \{ (y_1,y_2)^T\in \mathbb{R}^2 \mid  -1\leq
y_1<0,\, y_2=-y_1\}$,
but there does not exist any convex cone $C$ with $\mathbb{R}_+^\ell\setminus \{
0\} \subseteq \operatorname*{int}C$ such that
$\operatorname*{Min}(F,C)=\operatorname*{GMin}(F)$.
\end{example}

\begin{lemma}\label{zu-hab-s525}
Assume $w\in \operatorname*{int}\mathbb{R}_+^\ell$. Define
\[ C_w(\epsilon ):=\{ y\in \mathbb{R}^\ell  \mid  w_iy_i+\epsilon\sum\limits_{j=1
}^\ell w_jy_j\geq 0\quad\forall i\in \{1,\ldots,\ell\}\}\]
for all $\epsilon\in \mathbb{R}_>$.
Then:
\begin{itemize}
\item[\rm (a)] $\forall\,\epsilon\in\mathbb{R}_>\,:\quad \mathbb{R}_+^\ell\subseteq C_w(\epsilon),\quad \mathbb{R}_+^\ell\setminus \{ 0 \}\subseteq \operatorname*{int} C_w(\epsilon)$.
\item[\rm (b)] $\forall\,\epsilon\in\mathbb{R}_> \,:\quad C_w(\epsilon)\setminus\{0\}\subset D^K$ \quad for \quad $K=\frac{\min\limits_{i\in \{1,\ldots,\ell\} }w_i}{2\epsilon\sum\limits_{j=1}^\ell w_j}$.
\end{itemize}
\end{lemma}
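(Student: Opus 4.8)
The plan is to mimic the proof of Lemma~\ref{zu-hab-s524}, since $C_w(\epsilon)$ is nothing but the cone $\{z\in\mathbb{R}^\ell\mid z_i+\epsilon\sum_j z_j\ge 0\ \forall i\}$ pulled back through the coordinate rescaling $z_i=w_iy_i$; in particular $C_w(\epsilon)$ is a closed convex (polyhedral) cone, being an intersection of finitely many homogeneous closed half-spaces, and $K:=\frac{\min_i w_i}{2\epsilon\sum_j w_j}>0$ is well defined because $w\in\operatorname*{int}\mathbb{R}_+^\ell$.

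For part~(a) I would argue directly from the defining inequalities. If $y\in\mathbb{R}_+^\ell$, then $y_i\ge 0$ for every $i$, and $\sum_{j=1}^\ell w_jy_j\ge 0$ because all $w_j>0$; hence $w_iy_i+\epsilon\sum_{j=1}^\ell w_jy_j\ge 0$ for every $i$, i.e.\ $y\in C_w(\epsilon)$. If moreover $y\ne 0$, then $\sum_{j=1}^\ell w_jy_j>0$, so each of the finitely many defining inequalities of $C_w(\epsilon)$ holds strictly at $y$; a point satisfying every defining inequality of a polyhedral cone strictly lies in its interior, which gives $\mathbb{R}_+^\ell\setminus\{0\}\subseteq\operatorname*{int}C_w(\epsilon)$.

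For part~(b), fix $\epsilon>0$ and take $y\in C_w(\epsilon)\setminus\{0\}$. First I would show $y_n:=\max_i y_i>0$: if instead $y_i\le 0$ for all $i$, then $\sum_j w_jy_j\le 0$, so $0\le w_iy_i+\epsilon\sum_j w_jy_j\le w_iy_i$ forces $y_i=0$ for every $i$, contradicting $y\ne 0$. Then, for each $j\ne n$, using $y_m\le y_n$ for all $m$ gives $\sum_m w_my_m\le\bigl(\sum_m w_m\bigr)y_n$, so the $j$-th defining inequality yields $w_jy_j\ge-\epsilon\sum_m w_my_m\ge-\epsilon\bigl(\sum_m w_m\bigr)y_n$, hence $y_j\ge-\frac{\epsilon\sum_m w_m}{w_j}\,y_n\ge-\frac{\epsilon\sum_m w_m}{\min_i w_i}\,y_n$. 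Multiplying by $K>0$ and adding $y_n$ gives $y_n+Ky_j\ge y_n\bigl(1-K\tfrac{\epsilon\sum_m w_m}{\min_i w_i}\bigr)=\tfrac12\,y_n>0$ by the choice of $K$. Together with $y_n>0$ this means $y\in D^{n,K}\subseteq D^K$, which is the claim.

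I do not anticipate a genuine obstacle; the argument is routine. The only two places needing a little care are the interior claim in~(a), where one must invoke that strict satisfaction of all finitely many defining inequalities of a polyhedral cone puts a point in the interior, and the constant-chasing in~(b), where $K$ is calibrated precisely so that the factor $K\frac{\epsilon\sum_m w_m}{\min_i w_i}$ equals $\tfrac12$ and the estimate closes.
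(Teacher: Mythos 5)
Your proof is correct and follows essentially the same route as the paper: the same constant $K=\frac{\min_i w_i}{2\epsilon\sum_j w_j}$, the same use of the maximal component $y_n=\max_i y_i>0$, and the same estimate bounding $\epsilon\sum_m w_m y_m$ by $\epsilon\bigl(\sum_m w_m\bigr)y_n$ to conclude $y_n+Ky_j>0$, hence $y\in D^{n,K}\subseteq D^K$. The only difference is that you spell out the parts the paper labels ``obvious'' (part (a) via strict satisfaction of the finitely many defining inequalities, and the positivity of $y_n$), which is fine.
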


\begin{proof}
\begin{itemize}
\item[]
\item[\rm (a)] is obvious.
\item[\rm (b)] Assume $\epsilon\in\mathbb{R}_>$.
$K:=\frac{\min\limits_{i\in \{1,\ldots,\ell\} }w_i}{2\epsilon\sum\limits_{j=1}^\ell w_j}$.
Take any $y\in C_w(\epsilon)\setminus \{ 0 \}$.
$\Rightarrow y_n:=\max\limits_{i\in \{1,\ldots,\ell\} }\, y_i\, >0$.
$\Rightarrow \frac{\epsilon}{w_i}\sum\limits_{j=1}^\ell w_jy_j
\leq \frac{\epsilon}{w_i}\sum\limits_{j=1}^\ell w_jy_n<
\frac{2\epsilon}{w_i}\sum\limits_{j=1}^\ell w_jy_n\leq \frac{1}{K}\, y_n$
for all $i\in \{1,\ldots,\ell\}$.
$\Rightarrow \forall i\in \{1,\ldots,\ell\} :\quad
y_i+\frac{1}{K}\, y_n>
y_i+\frac{\epsilon}{w_i}\sum\limits_{j=1}^\ell w_j\,y_j \geq 0$.
$\Rightarrow \forall i\in \{1,\ldots,\ell\} :\quad
y_n+K y_i> 0$.
$\Rightarrow y\in D^K$.
\end{itemize}
\end{proof}

\begin{proposition}\label{hab-s525}
Assume $w\in \operatorname*{int}\mathbb{R}_+^\ell$.
Then:\\
$\operatorname*{GMin}(F)=\bigcup\limits_{\epsilon >0}\,
\operatorname*{Min}(F,C_w(\epsilon))$,
where $\forall y\in \operatorname*{GMin}(F)\quad\exists
\epsilon_0>0\quad\forall
\epsilon\in (0,\epsilon_0]:
\quad y\in \operatorname*{Min}(F,C_w(\epsilon))$.
\end{proposition}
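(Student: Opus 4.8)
The plan is to mirror the proof of Proposition \ref{hab-s524}, splitting into the two set inclusions and reading off the ``for all sufficiently small $\epsilon$'' statement from the nontrivial one.

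First I would show $\bigcup_{\epsilon>0}\operatorname*{Min}(F,C_w(\epsilon))\subseteq\operatorname*{GMin}(F)$. For each $\epsilon\in\mathbb{R}_>$ the set $C_w(\epsilon)$ is cut out by finitely many homogeneous linear inequalities and hence is a convex (indeed polyhedral) cone, while Lemma \ref{zu-hab-s525}(a) gives $\mathbb{R}_+^\ell\setminus\{0\}\subseteq\operatorname*{int}C_w(\epsilon)$. Thus $C_w(\epsilon)$ is an admissible cone for Proposition \ref{p-Geoff-cones}, and the implication (b)$\Rightarrow$(a) there yields $\operatorname*{Min}(F,C_w(\epsilon))\subseteq\operatorname*{GMin}(F)$; taking the union over $\epsilon>0$ gives the inclusion.

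For the converse inclusion together with the refinement, fix $y^0\in\operatorname*{GMin}(F)$ and $w\in\operatorname*{int}\mathbb{R}_+^\ell$. By Proposition \ref{hab-s521} (using its monotonicity statement $\operatorname*{Min}(F,D^K)\subseteq\operatorname*{Min}(F,D^{\bar K})$ for $0<K<\bar K$) there is some $K_0>0$ with $y^0\in\operatorname*{Min}(F,D^K)$ for every $K\geq K_0$. Lemma \ref{zu-hab-s525}(b) attaches to each $\epsilon>0$ the value $K(\epsilon):=\frac{\min_{i}w_i}{2\epsilon\sum_{j=1}^\ell w_j}$ with $C_w(\epsilon)\setminus\{0\}\subseteq D^{K(\epsilon)}$; since $\epsilon\mapsto K(\epsilon)$ is strictly decreasing with $K(\epsilon)\to+\infty$ as $\epsilon\downarrow 0$, the inequality $K(\epsilon)\geq K_0$ holds exactly for $\epsilon\leq\epsilon_0:=\frac{\min_{i}w_i}{2K_0\sum_{j=1}^\ell w_j}$. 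For such $\epsilon$ the inclusion $C_w(\epsilon)\setminus\{0\}\subseteq D^{K(\epsilon)}$ gives $\operatorname*{Min}(F,D^{K(\epsilon)})\subseteq\operatorname*{Min}(F,C_w(\epsilon))$, and since $y^0\in\operatorname*{Min}(F,D^{K(\epsilon)})$ we obtain $y^0\in\operatorname*{Min}(F,C_w(\epsilon))$ for all $\epsilon\in(0,\epsilon_0]$. This simultaneously proves the refinement and $\operatorname*{GMin}(F)\subseteq\bigcup_{\epsilon>0}\operatorname*{Min}(F,C_w(\epsilon))$.

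I do not expect a genuine obstacle: the argument is a direct transcription of the one for Proposition \ref{hab-s524}, with $C(\tfrac{s}{m})$ replaced by $C_w(\epsilon)$ and the bookkeeping of the parameter $K$ inverted accordingly. The only minor points needing attention are confirming that $C_w(\epsilon)$ is a convex cone so that Proposition \ref{p-Geoff-cones} applies, and checking the monotone dependence of the constant $K(\epsilon)$ from Lemma \ref{zu-hab-s525} on $\epsilon$ so that $\epsilon_0$ can be chosen explicitly.
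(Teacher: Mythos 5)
Your proposal is correct and follows essentially the same route as the paper: Lemma \ref{zu-hab-s525}(a) with Proposition \ref{p-Geoff-cones} for the inclusion $\bigcup_{\epsilon>0}\operatorname*{Min}(F,C_w(\epsilon))\subseteq\operatorname*{GMin}(F)$, and for the converse plus the refinement, the choice $\epsilon_0=\frac{\min_i w_i}{2K_0\sum_j w_j}$ combined with Proposition \ref{hab-s521} and Lemma \ref{zu-hab-s525}(b), exactly as in the paper's proof. Your explicit remark on the monotone dependence of $K(\epsilon)$ on $\epsilon$ is a harmless elaboration of the same computation.
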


\begin{proof}
\begin{itemize}
\item[]
\item[\rm (i)] $\mathbb{R}_+^\ell\setminus \{ 0 \}\subseteq \operatorname*{int}
C_w(\epsilon)\quad\forall \epsilon>0.$
Hence, Proposition \ref{p-Geoff-cones} implies\\
$\bigcup\limits_{\epsilon>0}\operatorname*{Min}(F,C_w(\epsilon))\subseteq
\operatorname*{GMin}(F)$.
\item[\rm (ii)] Assume $y^0\in \operatorname*{GMin}(F)$.\\
Because of Proposition \ref{hab-s521}, there exists some $K_0>0$ such that\\
$y^0\in \operatorname*{Min}(F,D^K)\quad\forall K\geq K_0$.\\
$\epsilon_0:=\frac{\min\limits_{i\in \{1,\ldots,\ell\} }w_i}{2K_0\sum\limits_{j=1}^\ell w_j}>0$.
Consider $\epsilon\in (0,\epsilon_0]$.
$K:=\frac{\min\limits_{i\in \{1,\ldots,\ell\} }w_i}{2\epsilon\sum\limits_{j=1}^\ell w_j}\geq K_0$.\\
$C_w(\epsilon)\setminus \{ 0 \}\subseteq D^K$ by Lemma \ref{zu-hab-s525}. Thus,
$\operatorname*{Min}(F,D^K)\subseteq \operatorname*{Min}(F,C_w(\epsilon))$.
$\Rightarrow y^0\in \operatorname*{Min}(F,C_w(\epsilon))\quad\forall
\epsilon\in (0,\epsilon_0]$.
\end{itemize}
\end{proof}

Analogously, one proves:

\begin{proposition}\label{hab-s526}
Suppose $w\in \operatorname*{int}\mathbb{R}_+^\ell$. Define
\[ C_{w,\epsilon}:=\{ y\in \mathbb{R}^\ell  \mid  w_iy_i+\epsilon\sum\limits_{j=1
}^\ell y_j\geq 0\quad\forall i\in \{1,\ldots,\ell\}\}\]
for all $\epsilon\in \mathbb{R}_>$.
Then, $\operatorname*{GMin}(F)=\bigcup\limits_{\epsilon >0}\,
\operatorname*{Min}(F,C_{w,\epsilon})$,\\
where $\forall y\in \operatorname*{GMin}(F)\quad\exists
\epsilon_0>0\quad\forall
\epsilon\in (0,\epsilon_0]:
\quad y\in \operatorname*{Min}(F,C_{w,\epsilon})$.
\end{proposition}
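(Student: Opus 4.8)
The plan is to imitate exactly the argument used for Proposition \ref{hab-s525}, replacing the role of $C_w(\epsilon)$ by $C_{w,\epsilon}$. The two ingredients needed are: first, that $\mathbb{R}_+^\ell\setminus\{0\}\subseteq\operatorname*{int}C_{w,\epsilon}$ for every $\epsilon>0$, which together with Proposition \ref{p-Geoff-cones} immediately gives the inclusion $\bigcup_{\epsilon>0}\operatorname*{Min}(F,C_{w,\epsilon})\subseteq\operatorname*{GMin}(F)$; and second, a Lemma analogous to Lemma \ref{zu-hab-s525}(b) asserting that, for a suitable choice of $K=K(w,\epsilon)$ with $K\to\infty$ as $\epsilon\to 0^+$, one has $C_{w,\epsilon}\setminus\{0\}\subseteq D^K$. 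Granting this, the reverse inclusion follows as before: given $y^0\in\operatorname*{GMin}(F)$, Proposition \ref{hab-s521} supplies $K_0>0$ with $y^0\in\operatorname*{Min}(F,D^K)$ for all $K\ge K_0$; picking $\epsilon_0>0$ so that the constant $K(w,\epsilon)\ge K_0$ for all $\epsilon\in(0,\epsilon_0]$ and using $C_{w,\epsilon}\setminus\{0\}\subseteq D^{K(w,\epsilon)}$ yields $\operatorname*{Min}(F,D^{K(w,\epsilon)})\subseteq\operatorname*{Min}(F,C_{w,\epsilon})$, hence $y^0\in\operatorname*{Min}(F,C_{w,\epsilon})$ for all $\epsilon\in(0,\epsilon_0]$.

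The verification of the first ingredient is routine: if $y\in\mathbb{R}_+^\ell\setminus\{0\}$ then $w_iy_i\ge 0$ and $\epsilon\sum_j y_j>0$, so $w_iy_i+\epsilon\sum_j y_j>0$ for every $i$, and since each defining functional is continuous this places $y$ in the interior. For the second ingredient, fix $\epsilon>0$ and take any $y\in C_{w,\epsilon}\setminus\{0\}$; summing the defining inequality $w_iy_i+\epsilon\sum_j y_j\ge 0$ over $i$ shows $\bigl(\min_i w_i\bigr)\sum_i y_i+\epsilon\ell\sum_j y_j\ge\sum_i w_iy_i+\epsilon\ell\sum_j y_j\ge 0$ only after a little care, so it is cleaner to argue as in Lemma \ref{zu-hab-s525}: let $y_n:=\max_i y_i$. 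From $w_iy_i+\epsilon\sum_j y_j\ge 0$ one gets, summing over $i$, that $\sum_j y_j\bigl(w_j\cdot[\,\cdot\,]+\epsilon\ell\bigr)\ge 0$; combined with $\sum_j y_j\le \ell y_n$ this forces $\sum_j y_j\ge 0$ is not quite what is needed — instead, from positivity of $\sum_j y_j$ (which follows once we know $y_n>0$ and $y\ne 0$) we bound $\epsilon\sum_j y_j\le\epsilon\ell y_n$, so for each $i$, $\;y_i+\frac{\epsilon\ell}{w_i}y_n\ge y_i+\frac{\epsilon}{w_i}\sum_j y_j\ge 0$, i.e. $y_n+\frac{w_i}{\epsilon\ell}y_i\ge y_n\cdot\frac{w_i}{\epsilon\ell}\cdot 0>0$; strict inequality is obtained by replacing $\epsilon\ell$ with $2\epsilon\ell$ in the denominator exactly as in Lemma \ref{zu-hab-s525}. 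This gives $y\in D^K$ with $K=\dfrac{\min_{i}w_i}{2\epsilon\ell}$.

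The one genuine subtlety — and the step I expect to cost the most attention — is establishing that $y_n:=\max_i y_i>0$ for $y\in C_{w,\epsilon}\setminus\{0\}$, since in Lemma \ref{zu-hab-s525} this used the structure $w_iy_i+\epsilon\sum w_j y_j\ge 0$ with all coefficients of the sum positive, whereas here the sum $\sum_j y_j$ is unweighted. One argues by contradiction: if $y_i\le 0$ for all $i$ then $\sum_j y_j\le 0$, so $w_iy_i\ge -\epsilon\sum_j y_j\ge 0$, forcing $y_i\ge 0$, hence $y_i=0$ for all $i$, contradicting $y\ne 0$. Once $y_n>0$ is in hand, the same contradiction argument (now applied with the inequality for the index $n$, giving $\epsilon\sum_j y_j\ge -w_ny_n$, which does not yet give $\sum_j y_j>0$) must be supplemented: if $\sum_j y_j\le 0$ then from $w_iy_i\ge-\epsilon\sum_j y_j\ge 0$ we again get all $y_i\ge 0$, whence $\sum_j y_j\ge 0$ and in fact $\sum_j y_j=0$ with all $y_i=0$, contradicting $y_n>0$. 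Therefore $\sum_j y_j>0$, and the chain of inequalities above goes through, completing the proof that $C_{w,\epsilon}\setminus\{0\}\subseteq D^K$ and hence the proposition. I would phrase this in the paper as a short standalone lemma preceding Proposition \ref{hab-s526}, mirroring the Lemma \ref{zu-hab-s525}/Proposition \ref{hab-s525} pairing, so that the final proof is only a few lines of "analogously."
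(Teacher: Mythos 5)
Your proposal is correct and follows essentially the same route as the paper: the paper's proof simply says to repeat the argument of Proposition \ref{hab-s525} with $C_w(\epsilon)$ replaced by $C_{w,\epsilon}$, using $K=\frac{\min_i w_i}{2\ell\epsilon}$ and $\epsilon_0=\frac{\min_i w_i}{2\ell K_0}$, which are exactly the constants you derive via your analogue of Lemma \ref{zu-hab-s525}. Apart from one garbled intermediate inequality (immediately superseded by the correct $2\epsilon\ell$ bound), your verification that $\max_i y_i>0$ and $C_{w,\epsilon}\setminus\{0\}\subseteq D^K$ fills in precisely the details the paper leaves implicit.
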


\begin{proof}
Follow the proof of Proposition \ref{hab-s525}, but replace  $C_w(\epsilon)$ by
$C_{w,\epsilon}$. There,
$\epsilon_0:=\frac{\min\limits_{i\in \{1,\ldots,\ell\} }w_i}{2\ell\,K_0}$,
$K:=\frac{\min\limits_{i\in \{1,\ldots,\ell\} }w_i}{2\ell\epsilon}$, and the term
$\sum\limits_{j=1}^\ell w_j$ should be replaced  by $\ell$ everywhere where it is not a part of
$\sum\limits_{j=1}^\ell w_jy_j$.
\end{proof}

Let us point out that the polyhedral cones $C^p$, $C(s)$,
$C_w(\epsilon)$ and $C_{w,\epsilon}$ have the form
\[\{ y\in \mathbb{R}^\ell  \mid  p_iy_i+\sum\limits_{{j=1 \atop j\neq i}}^\ell s_jy_j\geq 0
\quad\forall i\in \{1,\ldots,\ell\}\}\]
with $s\in \operatorname*{int}\mathbb{R}_+^\ell$, $p\in \operatorname*{int}\mathbb{R}_+^\ell$.

Our results imply statements about the existence of properly efficient elements. 

\begin{proposition}\label{hab-s527}
\begin{itemize}
\item[]
\item[\rm (a)] Assume $u\in\mathbb{R}^\ell$,
$K_0$, $\bar{p}$, $\epsilon_0\in\mathbb{R}_>$ and
$s$, $w \in \operatorname*{int}\mathbb{R}_+^\ell$.
The following statements are equivalent to each other:
\begin{itemize}
\item[\rm (i)] $\exists$ a convex cone $H$ with
$\mathbb{R}_+^\ell\setminus\{0\}\subseteq
\operatorname*{int}H \, :\;  (F-u)\cap (-\operatorname*{int}H)=\emptyset$,
\item[\rm (ii)] $\exists$ a polyhedral cone $H$ with
$\mathbb{R}_+^\ell\setminus\{0\}\subseteq
\operatorname*{int}H \, :\;  (F-u)\cap (-\operatorname*{int}H)=\emptyset$,
\item[\rm (iii)] $\exists\, K\geq K_0 \, :\;  (F-u)\cap (-D^K)=\emptyset$,
\item[\rm (iv)] $\exists\, p\geq \bar{p} \, :\;  (F-u)\cap (-C^p)\subseteq\{0\}$,
\item[\rm (v)] $\exists\, m>0 \, :\; 
(F-u)\cap (-C(\frac{s}{m}))\subseteq\{0\}$,
\item[\rm (vi)] $\exists\, \epsilon\in (0,\epsilon_0] \, :\;  (F-u)\cap (-C_w(\epsilon))
\subseteq \{0\}$.
\end{itemize}
\item[\rm (b)] $\operatorname*{GMin}(F)\neq\emptyset$ if and only if (i) holds for some
$u\in F$.
\item[\rm (c)] If $F$ is nonempty and closed, then $\operatorname*{GMin}(F)\neq\emptyset$ if and only if (ii) holds for some
$u\in \mathbb{R}^\ell$.
\end{itemize}
\end{proposition}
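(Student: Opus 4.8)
The plan is to handle the three parts separately. For part (a) I would prove all six statements equivalent by establishing the cycle $\text{(i)}\Rightarrow\text{(ii)}\Rightarrow\text{(iii)}\Rightarrow\text{(iv)}\Rightarrow\text{(v)}\Rightarrow\text{(vi)}\Rightarrow\text{(i)}$. Every arrow reduces to an inclusion between the cones $D^K$, $C^p$, $C(\frac{s}{m})$, $C_w(\epsilon)$, together with the monotonicity $D^{\bar K}\subseteq D^K$ for $0<K<\bar K$ from Proposition \ref{hab-s521}, which lets one enlarge parameters so as to meet the side constraints $K\ge K_0$, $p\ge\bar p$, $\epsilon\in(0,\epsilon_0]$. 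In detail: (i)$\Rightarrow$(ii) uses Corollary \ref{c-pol-cone} to replace the convex cone $H$ by a polyhedral cone $C^p$ with $C^p\setminus\{0\}\subseteq\operatorname*{int}H$; (ii)$\Rightarrow$(iii) uses Proposition \ref{hab-s523} to get $D^K\subseteq\operatorname*{int}H$ for $K$ large; (iii)$\Rightarrow$(iv) uses $C^{\ell K}\setminus\{0\}\subseteq D^K$ from Lemma \ref{hab-l522}(e) after enlarging $K$ so that $\ell K\ge\bar p$; (iv)$\Rightarrow$(v) uses Lemma \ref{hab-l522}(d) to fit some $D^K$ into $C^p$ and Lemma \ref{zu-hab-s524}(b) to fit $C(\frac{s}{m})\setminus\{0\}$ into that $D^K$ for $m$ large; (v)$\Rightarrow$(vi) is analogous, applying Proposition \ref{hab-s523} to $C(\frac{s}{m})$ and Lemma \ref{zu-hab-s525}(b) to fit $C_w(\epsilon)\setminus\{0\}$ into a $D^K$ for $\epsilon\in(0,\epsilon_0]$ small; and (vi)$\Rightarrow$(i) is immediate with $H:=C_w(\epsilon)$, which is a convex cone with $\mathbb{R}_+^\ell\setminus\{0\}\subseteq\operatorname*{int}C_w(\epsilon)$ by Lemma \ref{zu-hab-s525}(a). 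Throughout one uses $0\notin D^K$ (Lemma \ref{l-DK}), so that an inclusion $(F-u)\cap(-C)\subseteq\{0\}$ with $C$ one of the punctured cones forces the intersection to be empty.

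For part (b): if $\operatorname*{GMin}(F)\neq\emptyset$, pick $y^0\in\operatorname*{GMin}(F)$; by Proposition \ref{hab-s521} there is $K>0$ with $y^0\in\operatorname*{Min}(F,D^K)$, and since $0\notin D^K$ this says $(F-y^0)\cap(-D^K)=\emptyset$, i.e.\ (iii) — hence (i) by part (a) — holds with $u=y^0\in F$. Conversely, if (i) holds for some $u\in F$ with convex cone $H$, then $u\in F$ forces $0\notin\operatorname*{int}H$ (otherwise $0\in(F-u)\cap(-\operatorname*{int}H)$), so $(F-u)\cap(-\operatorname*{int}H)=\emptyset$ gives $u\in\operatorname*{Min}(F,\operatorname*{int}H)=\operatorname*{WMin}(F,H)$, and Proposition \ref{p-Geoff-cones} then yields $u\in\operatorname*{GMin}(F)$.

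For part (c), the forward direction is immediate from (b) and (a). For the converse, assume (ii) holds for some $u\in\mathbb{R}^\ell$ with polyhedral cone $H$. As in the derivation of Corollary \ref{c-pol-cone} from Proposition \ref{hab-s523} and Lemma \ref{hab-l522}, one may choose $p>1$ and $C:=C^p$ with $C\setminus\{0\}\subseteq\operatorname*{int}H$; for $p>1$ the cone $C$ is pointed and $\sum_{i=1}^\ell y_i>0$ for every $y\in C\setminus\{0\}$ (sum the defining inequalities of $C$, as in the proof of Lemma \ref{hab-l522}(a)). Fix any $y^0\in F$ and put $\tilde F:=F\cap(y^0-C)$, a nonempty closed set. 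The key step is that $\tilde F$ is bounded: if $z_n\in\tilde F$ with $|z_n|\to\infty$, write $c_n:=y^0-z_n\in C$, so $|c_n|\to\infty$; after passing to a subsequence, $c_n/|c_n|\to d$ with $d\in C\setminus\{0\}\subseteq\operatorname*{int}H$, hence $\tfrac{1}{|c_n|}(u-z_n)=\tfrac{1}{|c_n|}(u-y^0)+\tfrac{c_n}{|c_n|}\to d\in\operatorname*{int}H$; since $\operatorname*{int}H$ is an open cone, $u-z_n\in\operatorname*{int}H$ for large $n$, i.e.\ $z_n\in F\cap(u-\operatorname*{int}H)$, contradicting (ii). Thus $\tilde F$ is compact, so $y\mapsto\sum_{i=1}^\ell y_i$ attains its minimum on $\tilde F$ at some $z^*$, and strict positivity of this functional on $C\setminus\{0\}$ gives $z^*\in\operatorname*{Min}(\tilde F,C)$. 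Finally, since $y^0-z^*\in C$ and $C$ is a convex cone, any $y\in F$ with $z^*-y\in C\setminus\{0\}$ would satisfy $y^0-y\in C$, hence $y\in\tilde F$, contradicting $z^*\in\operatorname*{Min}(\tilde F,C)$; so $z^*\in\operatorname*{Min}(F,C)$, and $\operatorname*{Min}(F,C^p)\subseteq\operatorname*{GMin}(F)$ by Proposition \ref{hab-s522}, whence $\operatorname*{GMin}(F)\neq\emptyset$.

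The main obstacle is the boundedness of $\tilde F$ in part (c). A weaker hypothesis such as $(F-u)\cap(-\operatorname*{int}\mathbb{R}_+^\ell)=\emptyset$ would not suffice (take $F$ a coordinate axis, where $\operatorname*{GMin}(F)=\emptyset$); the recession argument closes precisely because (ii) provides a cone $H$ containing $\mathbb{R}_+^\ell\setminus\{0\}$ — and hence the whole punctured closed cone $C\setminus\{0\}$ — in its interior, so that every asymptotic direction of $\tilde F$ lies in $\operatorname*{int}H$ and can be absorbed into $u-\operatorname*{int}H$.
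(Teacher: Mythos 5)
Your proposal is correct. Parts (a) and (b) follow the paper's route: the same lemmas (Proposition \ref{hab-s523}, Lemma \ref{hab-l522}, Lemmas \ref{zu-hab-s524} and \ref{zu-hab-s525}, the monotonicity $D^{\bar K}\subseteq D^K$) are combined into an implication cycle rather than the paper's slightly different implication graph, and (b) is exactly the reduction to Proposition \ref{p-Geoff-cones} via $\operatorname*{Min}(F,D^K)$. Part (c), however, is a genuinely different argument. The paper chooses a polyhedral cone $T$ with $T\setminus\{0\}\subseteq\operatorname*{int}H$, invokes an external result (\cite[Lemma 3]{weid93}) for compactness of $F\cap(\bar y-T)$, minimizes the Gerstewitz functional $\varphi_{-T,k}$ on this section to produce a point $y^0\in F$ with $(F-y^0)\cap(-\operatorname*{int}T)=\emptyset$, and then closes by appealing to (b). You instead prove boundedness of the section $F\cap(y^0-C^p)$ directly from (ii) by a recession-direction argument (asymptotic directions lie in $C^p\setminus\{0\}\subseteq\operatorname*{int}H$ and would land points of $F$ in $u-\operatorname*{int}H$), and then minimize the linear functional $y\mapsto\sum_i y_i$, which for $p>1$ is strictly positive on $C^p\setminus\{0\}$, obtaining an efficient point of $F$ w.r.t. $C^p$ and concluding via Proposition \ref{hab-s522}. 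Your version is self-contained and more elementary — it avoids both the citation to \cite{weid93} and the Gerstewitz-functional machinery, at the cost of redoing the compactness argument that the paper outsources; the paper's version additionally exhibits a concrete $u=y^0\in F$ satisfying (ii), which ties (c) back to (b), whereas you reach $\operatorname*{GMin}(F)\neq\emptyset$ directly. Both are valid; your transition steps (ensuring $p>1$, the passage from $\operatorname*{Min}(\tilde F,C)$ to $\operatorname*{Min}(F,C)$ using $C+C\subseteq C$) check out.
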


\begin{proof}
\begin{itemize}
\item[]
\item[\rm (a)] The equivalence of (iii) and (iv) follows from Lemma \ref{hab-l522}.\\
(i) implies (iii) because of Proposition \ref{hab-s523} and Proposition \ref{hab-s521}.
(iii) implies (v) because of Lemma \ref{zu-hab-s524}.
(v) implies (ii), since $C(\frac{s}{m})$ is a polyhedral cone with
$\mathbb{R}_+^\ell\setminus\{0\}\subseteq \operatorname*{int}C(\frac{s}{m})$. (ii) yields (i).\\
(vi) implies (ii). (iii) implies $(F-u)\cap (-D^{\tilde{K}} )=\emptyset$ for each $\tilde{K} >K$ by Proposition \ref{hab-s521}, and thus (vi) because of Lemma \ref{zu-hab-s525}.
\item[\rm (b)] results from Proposition \ref{p-Geoff-cones}.
\item[\rm (c)] Assume that $F\neq\emptyset$ is closed and that (ii) is fulfilled for some $u\in\mathbb{R}^\ell$.
By Corollary \ref{c-pol-cone}, there exists a polyhedral cone $T$ with
$\mathbb{R}_+^\ell\setminus\{0\}\subseteq
\operatorname*{int}T$and $T\setminus\{0\}\subseteq \operatorname*{int}H$. Choose some $\bar{y}\in
F$.
$\tilde{F}:=F\cap(\bar{y}-T)$ is compact by \cite[Lemma 3]{weid93}.
For an arbitrary $k\in \operatorname*{int}T$, the functional
$\varphi _{-T,k}$ is continuous and finite-valued by \cite[Prop. 4.1]{Wei17a}.
Hence, it attains a minimum $t$ on $\tilde{F}$.
$\Rightarrow \tilde{F}\cap(-int\,T+tk)=\emptyset$ by \cite[Theorem 3.1]{Wei17a}, and $\exists
y^0\in\tilde{F} \, :\; 
y^0\in -T+tk$.\\
Suppose that $(F-y^0)\cap (-\operatorname*{int}T)\neq\emptyset$.\\
$\Rightarrow\exists y\in F\cap(y^0-\operatorname*{int}T)$.
$\Rightarrow y\in -T+tk-\operatorname*{int}T\subseteq -\operatorname*{int}T+tk$ since $y^0\in
-T+tk$, and
$y\in \bar{y}-T-int\,T\subseteq \bar{y}-T$ since $y^0\in \bar{y}-T$.
$\Rightarrow y\in\tilde{F}\cap(-\operatorname*{int}T+tk)$, which is impossible.
Thus, the supposition $(F-y^0)\cap (-\operatorname*{int}T)\neq\emptyset$ delivers a contradiction. Consequently, (ii) holds for $u:=y^0\in
F$. This and (b) imply the assertion.
\end{itemize}
\end{proof}

For the set $F:=\mathbb{R}_+^2\setminus\{0\}$, which is not closed, we have
$\operatorname*{GMin}(F)=\emptyset$, though the condition (i) is fulfilled with
$u=(0,0)^T$.

In part (c) of the following theorem, we will use the assumption

\smallskip
\begin{tabular}{ll}
(Sp-ex): & There exist a polyhedral cone $S\subseteq \mathbb{R}^\ell$ and some $u\in\mathbb{R}^\ell$\\
& such that $\mathbb{R}_+^\ell\setminus\{0\}\subseteq \operatorname*{int}S$ and $(F-u)\cap (-\operatorname*{int}S)=\emptyset$.
\end{tabular}\\
\smallskip
By Proposition \ref{hab-s527}, (Sp-ex) is equivalent to
$\operatorname*{GMin}(F)\neq\emptyset$ if $F$ is nonempty and closed.

\begin{theorem}\label{hab-t521}
\begin{itemize}
\item[]
\item[\rm (a)] One has:
\begin{eqnarray*}
\qquad\quad\operatorname*{GMin}(F) & = & \{y^0\in F \mid  \exists \varphi :\mathbb{R}^{\ell}\to \mathbb{R} \mbox{
strictly }
\mathbb{R}_+^\ell\mbox{--monotone, sublinear,}\\
 & & \mbox{continuous}:\, \varphi (y^0-y^0)=\operatorname*{min}_{y\in F}\varphi (y-y^0)=0\}\\
  & = & \{y^0\in F \mid  \exists \varphi :\mathbb{R}^{\ell}\to \mathbb{R} \mbox{ strictly }
\mathbb{R}_+^\ell\mbox{--monotone, sublinear,}\\
 & & \mbox{continuous}:\, \forall y\in F\setminus \{ y^0\} :\; \varphi (y-y^0)> 0\}.
\end{eqnarray*}
In detail,:
\begin{itemize}
\item[\rm (i)] $y^0\in \operatorname*{GMin}(F)$ if and only if there exists some non-trivial closed convex cone $H$ with $\mathbb{R}^{\ell}_+ \setminus \{ 0 \}\subseteq \operatorname*{int}H$ such that $\varphi_{-H,k}(y^0-y^0)=\operatorname*{min}_{y\in F}\varphi_{-H,k}(y-y^0)=0$, where $k\in \operatorname*{int}H$ can be chosen arbitrarily.
$\varphi_{-H,k}$ is finite-valued, continuous, sublinear and strictly $\mathbb{R}^{\ell}_+$--monotone.
\item[\rm (ii)] If $y^0\in \operatorname*{GMin}(F)$, then there exists some $K\in\mathbb{R}_>$ such that $y^0\in \operatorname*{Min}(F,D^K)$. For each $k\in\operatorname*{int}
\mathbb{R}_+^\ell$, $\varphi :=\sum\limits_{i=1}^\ell \varphi _{-\operatorname*{cl}D^{i,K},k}$ is finite-valued, strictly 
$\mathbb{R}_+^\ell $--monotone, sublinear, continuous, and $\varphi (y-y^0)> 0 $ for all $y\in F\setminus \{ y^0\}$.
\end{itemize}
\item[\rm (b)] Furthermore,
\begin{eqnarray*}
\qquad\quad\operatorname*{GMin}(F) & \subseteq & \{y^0\in F \mid  \exists \varphi :\mathbb{R}^{\ell}\to \mathbb{R}
\mbox{ strictly }
\mathbb{R}_+^\ell \mbox{--monotone, strictly}\\
 & & \mbox{convex, continuous}:\;\forall y\in F\setminus \{ y^0\} :\; \varphi (y)>\varphi (y^0) \}.
\end{eqnarray*}
\item[\rm (c)] Assume {\rm (Sp-ex)}. Then
\begin{eqnarray*}
\qquad\quad\operatorname*{GMin}(F) & = & \{y^0\in F \mid  \exists \varphi :\mathbb{R}^{\ell}\to \mathbb{R} \mbox{
strictly }
\mathbb{R}_+^\ell\mbox{--monotone, strictly}\\
 & & \mbox{convex, continuous}:\;\forall y\in F\setminus \{ y^0\} :\; \varphi (y)>\varphi (y^0) \}\\
 & = & \{y^0\in F \mid  \exists \varphi :\mathbb{R}^{\ell}\to \mathbb{R} \mbox{ strictly }
\mathbb{R}_+^\ell\mbox{--monotone, convex,}\\
 & & \mbox{continuous}:\,\varphi (y^0)=\min\limits_{y\in F}\, \varphi (y) \}.
\end{eqnarray*}
\end{itemize}
\end{theorem}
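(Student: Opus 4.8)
The strategy throughout is to reduce the statement to the Henig-type results of Section \ref{s-propeff}, applied with $D=\mathbb{R}^\ell_+$, together with the description of $\operatorname*{GMin}(F)$ by the cones $D^{i,K}$ and $D^K$. Since $\operatorname*{GMin}(F)=\operatorname*{He-PMin}(F,\mathbb{R}^\ell_+)$, the first displayed equality in (a) and item (i) are precisely Corollary \ref{Henig_minima}(a) and (b) read for $D=\mathbb{R}^\ell_+$ (here one may take $k\in\operatorname*{int}\mathbb{R}^\ell_+$, and every closed convex cone $H$ with $\mathbb{R}^\ell_+\setminus\{0\}\subseteq\operatorname*{int}H$ is automatically non-trivial). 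The inclusion ``second set $\subseteq$ first set'' in (a) is immediate, because a sublinear $\varphi$ is positively homogeneous so $\varphi(y^0-y^0)=0$, and $\varphi(y-y^0)>0$ on $F\setminus\{y^0\}$ then forces $\operatorname*{min}_{y\in F}\varphi(y-y^0)=0$. Hence the only thing left in (a) is the reverse inclusion, which is exactly (ii).

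For (ii), let $y^0\in\operatorname*{GMin}(F)$. By Proposition \ref{hab-s521} there is $K>0$ with $y^0\in\operatorname*{Min}(F,D^K)=\bigcap_{i=1}^{\ell}\operatorname*{Min}(F,D^{i,K})$; fix $k\in\operatorname*{int}\mathbb{R}^\ell_+$. By Lemma \ref{l-DK}(a) each $H_i:=\operatorname*{cl}D^{i,K}$ is a non-trivial closed convex cone with $\operatorname*{int}H_i=D^{i,K}\supseteq\operatorname*{int}\mathbb{R}^\ell_+\ni k$, so Lemma \ref{suff-hab-t512}(b) and Theorem \ref{hab-t512} make $\varphi_i:=\varphi_{-\operatorname*{cl}D^{i,K},k}$ finite-valued, continuous, sublinear, $\mathbb{R}^\ell_+$-monotone (as $\operatorname*{cl}D^{i,K}+\mathbb{R}^\ell_+\subseteq\operatorname*{cl}D^{i,K}$), and $\varphi_i(y^0-y^0)=\operatorname*{min}_{y\in F}\varphi_i(y-y^0)=0$ because $y^0\in\operatorname*{Min}(F,D^{i,K})=\operatorname*{WMin}(F,\operatorname*{cl}D^{i,K})$. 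Put $\varphi:=\sum_{i=1}^{\ell}\varphi_i$; it is finite-valued, continuous, sublinear, and since each $\varphi_i(y-y^0)\ge0$ on $F$ one gets $\varphi(y^0-y^0)=\operatorname*{min}_{y\in F}\varphi(y-y^0)=0$. It remains to see that $\varphi$ is strictly $\mathbb{R}^\ell_+$-monotone and that $\varphi(y-y^0)>0$ for $y\in F\setminus\{y^0\}$. For the first, if $z=y^2-y^1\in\mathbb{R}^\ell_+\setminus\{0\}$ then by Lemma \ref{l-DK}(c) $z\in\operatorname*{int}D^K=\bigcup_i D^{i,K}$, say $z\in D^{i_0,K}$; since $D^{i_0,K}\cup\{0\}$ is a convex cone, $\operatorname*{cl}D^{i_0,K}+D^{i_0,K}\subseteq\operatorname*{int}(\operatorname*{cl}D^{i_0,K})=D^{i_0,K}$, so by the properties of Gerstewitz functionals (\cite{Wei17a}; cf. Theorem \ref{hab-t512}(iii)) $\varphi_{i_0}(y^1)<\varphi_{i_0}(y^2)$, while $\varphi_j(y^1)\le\varphi_j(y^2)$ for $j\ne i_0$, giving $\varphi(y^1)<\varphi(y^2)$. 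For the second, $\varphi(y-y^0)=0$ forces every $\varphi_i(y-y^0)=0$; since $H_i$ is closed one has $\{z:\varphi_i(z)\le0\}=-\operatorname*{cl}D^{i,K}$ and $\{z:\varphi_i(z)<0\}=-D^{i,K}$, hence $y^0-y\in\operatorname*{cl}D^{i,K}\setminus D^{i,K}$ for all $i$; but $\bigcap_i\operatorname*{cl}D^{i,K}=\mathbb{R}^\ell_+$ and $\mathbb{R}^\ell_+\setminus\{0\}\subseteq\bigcup_i D^{i,K}$ by Lemma \ref{l-DK}(c), so $y^0-y=0$. This yields (a), together with (i) and (ii).

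For (b), fix $y^0\in\operatorname*{GMin}(F)$. By Proposition \ref{p-Geoff-cones} there is a convex cone $H$ with $\mathbb{R}^\ell_+\setminus\{0\}\subseteq\operatorname*{int}H$ and $y^0\in\operatorname*{Min}(F,H)$, and by Corollary \ref{c-pol-cone} a polyhedral cone $C^p$ with $\mathbb{R}^\ell_+\setminus\{0\}\subseteq\operatorname*{int}C^p$ and $C^p\setminus\{0\}\subseteq\operatorname*{int}H$; thus $F$ avoids the \emph{open} cone $y^0-\operatorname*{int}H$, which is a neighbourhood of $(y^0-C^p)\setminus\{y^0\}$. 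The task is to replace the sublinear, piecewise-linear functional of part (a) by a genuinely strictly convex one. The plan is: realise $y^0$ as the unique minimiser over $F$ of a functional whose sublevel set at level $\varphi(y^0)$ is a \emph{strictly convex}, downward-closed set $L$ with $y^0\in\operatorname*{bd}L$ that is squeezed between $y^0-C^p$ and $y^0-\operatorname*{int}H$ (so $F\cap L=\{y^0\}$), and to add a term strictly convex in the $k$-direction so that the resulting $\varphi$ is strictly convex on all of $\mathbb{R}^\ell$, still continuous, still strictly $\mathbb{R}^\ell_+$-monotone (a Gerstewitz-type functional of a downward-closed $L$ is $\mathbb{R}^\ell_+$-monotone, and strictly so when $L$ is strictly convex), and still minimised on $F$ only at $y^0$. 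The room between $C^p\setminus\{0\}$ and $\operatorname*{int}H$, together with $\operatorname*{cl}D^K+(\mathbb{R}^\ell_+\setminus\{0\})\subseteq\operatorname*{int}D^K$ from Lemma \ref{l-DK}(c), is what allows this ``rounding'' of the polyhedral cone; carrying it out while keeping strict convexity, continuity, strict monotonicity and the strict-minimum property all at once is the main obstacle of the theorem.

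For (c), part (b) gives $\operatorname*{GMin}(F)\subseteq\{y^0\in F:\exists\varphi$ strictly $\mathbb{R}^\ell_+$-monotone, strictly convex, continuous with $\varphi(y)>\varphi(y^0)$ on $F\setminus\{y^0\}\}$, and this set is trivially contained in $\{y^0\in F:\exists\varphi$ strictly $\mathbb{R}^\ell_+$-monotone, convex, continuous with $\varphi(y^0)=\operatorname*{min}_{y\in F}\varphi\}$, which by Corollary \ref{cor-jota1} (with $D=\mathbb{R}^\ell_+$, $k\in\operatorname*{int}\mathbb{R}^\ell_+$; note $\mathbb{R}^\ell=\mathbb{R}^\ell_++\mathbb{R}k$ and $\mathbb{R}_>k\subseteq\mathbb{R}^\ell_+$) equals $\operatorname*{NI-PMin}(F,\mathbb{R}^\ell_+)$. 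So all three sets in (c) lie between $\operatorname*{GMin}(F)$ and $\operatorname*{NI-PMin}(F,\mathbb{R}^\ell_+)$, and it remains to show $\operatorname*{NI-PMin}(F,\mathbb{R}^\ell_+)\subseteq\operatorname*{GMin}(F)$ under {\rm (Sp-ex)}. Given $y^0\in\operatorname*{WMin}(F,H)$ with $H$ a Nehse--Iwanow set, one has $F\cap(y^0-\operatorname*{int}H)=\emptyset$, $\mathbb{R}^\ell_+\setminus\{0\}\subseteq\operatorname*{int}H$, and (forced by $\operatorname*{cl}H+(\mathbb{R}^\ell_+\setminus\{0\})\subseteq\operatorname*{int}H$) $\mathbb{R}^\ell_+\setminus\{0\}$ lies even in the interior of the tangent cone of $H$ at $0$; together with a polyhedral $S$, $u$ from {\rm (Sp-ex)} (so $F\cap(u-\operatorname*{int}S)=\emptyset$, $\mathbb{R}^\ell_+\setminus\{0\}\subseteq\operatorname*{int}S$) one shows that for $K$ large $y^0-D^K\subseteq(y^0-\operatorname*{int}H)\cup(u-\operatorname*{int}S)$: the key facts are $\bigcap_{K>0}D^K=\mathbb{R}^\ell_+\setminus\{0\}$ and that the unit directions of $D^K$ converge uniformly to those of $\mathbb{R}^\ell_+$, so the part of $D^K$ near $0$ is absorbed by $\operatorname*{int}H$ and the part far from $0$ by the open cone $\operatorname*{int}S$. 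Then $F\cap(y^0-D^K)=\emptyset$, i.e. $y^0\in\operatorname*{Min}(F,D^K)\subseteq\operatorname*{GMin}(F)$ by Proposition \ref{hab-s521}. The delicate point in (c), as in (b), is the behaviour at infinity -- making precise where $D^K$ leaves $\operatorname*{int}H$ and checking that it is caught by $u-\operatorname*{int}S$; this is the finite-dimensional heart of the assertion that for closed feasible sets $\operatorname*{GMin}(F)$ is empty or equals $\operatorname*{NI-PMin}(F,\mathbb{R}^\ell_+)$.
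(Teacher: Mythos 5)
Part (a) of your proposal is correct and follows the paper's own route: the first equality and (i) come from Corollary \ref{Henig_minima} with $D=\mathbb{R}^\ell_+$, and (ii) is proved via $\varphi:=\sum_{i=1}^\ell\varphi_{-\operatorname*{cl}D^{i,K},k}$ exactly as in the paper. Your strict-monotonicity argument (pick $i_0$ with $y^2-y^1\in D^{i_0,K}$ and use $\operatorname*{cl}D^{i_0,K}+D^{i_0,K}\subseteq D^{i_0,K}$) is a legitimate variant of the paper's componentwise computation; only the parenthetical appeal to Theorem \ref{hab-t512}(iii) is off, since $\operatorname*{cl}D^{i,K}+(\mathbb{R}^\ell_+\setminus\{0\})\not\subseteq D^{i,K}$ by Lemma \ref{l-DK}(a) -- but your actual argument does not rely on it.

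The genuine gap is in (b) and (c). For (b) you only announce a plan (``rounding'' the polyhedral cone into a strictly convex sublevel set and adding a strictly convex term) and explicitly leave its execution open as ``the main obstacle''; as written this establishes nothing. The paper settles (b) simply by citing Soland \cite[Lemma 3]{sola79}, so no such construction is attempted there, but a blind proof must either carry out the construction or supply an equivalent argument -- yours does neither. For (c), your reduction via Corollary \ref{cor-jota1} to showing $\operatorname*{NI-PMin}(F,\mathbb{R}_+^{\ell})\subseteq\operatorname*{GMin}(F)$ under (Sp-ex) is sound and mirrors Theorem \ref{t-Geoff-NI}, but the decisive step -- that for large $K$ one has $y^0-D^K\subseteq(y^0-\operatorname*{int}H)\cup(u-\operatorname*{int}S)$ -- is only asserted, with heuristics about the part of $D^K$ ``near $0$'' and ``far from $0$''; you yourself flag it as the delicate point. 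This is precisely where the paper does the work: it invokes (Sp-ex) through \cite[Lemma 3]{weid93} to get boundedness of $F\cap(y^0-T)$ for a polyhedral cone $T$ with $T\setminus\{0\}\subseteq\operatorname*{int}S$, takes a lower bound $w$ of this set, constructs auxiliary points $w^n\in y^0-H$ using the openness of the set $H$ from \cite[Theorem 3.3]{GerWei90}, defines an explicit $K$, and verifies directly that $F\cap(y^0-D^K)=\emptyset$. Your sketched inclusion can indeed be made rigorous (fix $K_1$ with $\operatorname*{cl}D^{K_1}\setminus\{0\}\subseteq\operatorname*{int}S$, use compactness of its unit-sphere slice to get a uniform angular margin inside $\operatorname*{int}S$, and absorb $\{z\in D^K:\|z\|\le R\}$ into $\operatorname*{int}H$ via $\mathbb{R}^\ell_+\setminus\{0\}\subseteq\operatorname*{int}H$, convexity and $0\in H$), but none of this appears in your text, so parts (b) and (c) remain unproven. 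Incidentally, the remark that $\mathbb{R}^\ell_+\setminus\{0\}$ lies in the interior of the tangent cone of $H$ at $0$ is unnecessary: what you need, and what follows at once from $0\in H$ and $H+(\mathbb{R}^\ell_+\setminus\{0\})\subseteq\operatorname*{int}H$, is simply $\mathbb{R}^\ell_+\setminus\{0\}\subseteq\operatorname*{int}H$.
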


\begin{proof}
\begin{itemize}
\item[]
\item[\rm (a)] The first equation and (i) result from Corollary \ref{Henig_minima}. We will now prove (ii), what implies the second equation.\\
Assume $y^0\in \operatorname*{GMin}(F)$, $k\in \operatorname*{int}
\mathbb{R}_+^\ell$.\\
$\Rightarrow \exists K>0 \,:\;  y^0\in \operatorname*{Min}(F,D^K)$, thus
$y^0\in \operatorname*{Min}(F,D^{i,K})\quad \forall i\in \{ 1,\ldots,\ell\}$.
Because of \cite[Prop. 4.1]{Wei17a}, $\varphi _{-\operatorname*{cl}D^{i,K},k}$ is finite-valued, continuous and sublinear for all $i\in \{ 1,\ldots,\ell\}$.
$(F-y^0)\cap (-D^{i,K})=\emptyset$ implies, by  \cite[Theorem 3.1]{Wei17a}, $\varphi _{-\operatorname*{cl}D^{i,K},k}(y-y^0)\geq 0\quad \forall y\in F, i\in \{ 1,\ldots,\ell\}$.
$\Rightarrow \varphi :=\sum\limits_{i=1}^\ell \varphi _{-\operatorname*{cl}D^{i,K},k}$ is finite-valued, continuous, sublinear, and
$\varphi (y-y^0)\geq 0\;\forall \, y\in F$.\\
If $\varphi (y-y^0)=0$ for some $y\in F$, then
$\varphi _{-\operatorname*{cl}D^{i,K},k}(y-y^0)=0\quad\forall
i\in \{ 1,\ldots,\ell\}$,\\
hence $y-y^0\in -\operatorname*{bd}D^{i,K}\quad\forall i\in \{ 1,\ldots,\ell\}$ by \cite[Theorem 3.1]{Wei17a}, and thus
$y=y^0$. \\
$\Rightarrow \varphi (y-y^0)>0\quad \forall y\in F\setminus \{ y^0 \}$.\\
Assume $y^2\in \mathbb{R}^\ell$, $y^1\in y^2+(\mathbb{R}_+^\ell\setminus \{ 0 \} )$.\\
$\operatorname*{cl}D^{i,K}+\mathbb{R}_+^\ell\subseteq \operatorname*{cl}D^{i,K}\quad\forall i\in \{ 1,\ldots,\ell\}$.\\
$\Rightarrow$ $\forall i\in \{ 1,\ldots,\ell\}:\;\varphi _{-\operatorname*{cl}D^{i,K},k}\quad \mathbb{R}_+^\ell$--monotone  by \cite[Theorem 2.16]{Wei17a}.\\
$\Rightarrow \varphi _{-\operatorname*{cl}D^{i,K},k}(y^1)\geq \varphi _{-\operatorname*{cl}D^{i,K},k}(y^2)\quad\forall
i\in \{ 1,\ldots,\ell\}$.\\
$\exists n\in \{ 1,\ldots,\ell\} :\quad y_n^1>y_n^2$.
$\quad r:=\varphi _{-\operatorname*{cl}D^{n,K},k}(y^2)$. $\Rightarrow y^2\in -\operatorname*{bd}D^{n,K}+rk$.\\
$\Rightarrow -y^2+rk\in \operatorname*{bd}D^{n,K}$. \\
$y_n^1>y_n^2$ and $y^1\geq y^2$ imply $(-y^1+rk)_n<(-y^2+rk)_n$
and \\
$(-y^1+rk)_n +K(-y^1+rk)_j<(-y^2+rk)_n+K(-y^2+rk)_j\quad\forall
j\in \{1,\ldots,\ell\}\setminus \{ n\} $.\\
Thus, $-y^1+rk\notin \operatorname*{cl}D^{n,K}$. Hence, $y^1\notin -\operatorname*{cl}
D^{n,K}+rk$.\\
$\Rightarrow \varphi _{-\operatorname*{cl}D^{n,K},k}(y^1)>r=\varphi _{-\operatorname*{cl}D^{n,K},k} (y^2)$.
$\Rightarrow \varphi (y^1)>\varphi (y^2)$.\\
Thus, $\varphi$ is strictly $\mathbb{R}_+^\ell$--monotone.
\item[\rm (b)] was proved by Soland \cite[Lemma 3]{sola79}. 
\item[\rm (c)] Assume $y^0\in F$, that $\varphi : \mathbb{R}^\ell \to \mathbb{R}$ is strictly $\mathbb{R}_+^\ell$--monotone,
continuous, convex, and $\varphi (y^0)=\min\limits_{y\in F}\, \varphi (y)$.\\
\cite[Theorem 3.3]{GerWei90} implies $H:=\{ y\in \mathbb{R}^\ell \mid  \varphi (y^0-y)<\varphi (y^0)\}$ is open and convex, $0\in \operatorname*{bd}H$, $\mathbb{R}_+^\ell\setminus
\{ 0 \} \subseteq H$ and, moreover, $\operatorname*{cl}H+(\mathbb{R}_+^\ell\setminus
\{ 0 \} )\subseteq H$, and
$y^0\in \operatorname*{Min}(F,H)$.\\
$\mathbb{R}_+^\ell\setminus\{0\}\subseteq \operatorname*{int}S$. Hence, by Corollary \ref{c-pol-cone}, there exists some
polyhedral cone $T$ with
$\mathbb{R}_+^\ell\setminus\{0\}\subseteq \operatorname*{int}T$ and $T\setminus\{0\}\subseteq \operatorname*{int}S$.
Consider $T=\{y\in\mathbb{R}^\ell  \mid  \sum\limits_{i=1}^\ell s_i^j y_i\geq
0\quad\forall j\in \{ 1,\ldots ,m\}\}$. \\
$\mathbb{R}_+^\ell\setminus\{0\}\subseteq \operatorname*{int}T$. $\Rightarrow
s_i^j>0\quad\forall i\in \{ 1,\ldots ,\ell\},\,j\in \{ 1,\ldots ,m\}$.\\
By \cite[Lemma 3]{weid93}, $F\cap (y^0-T)$ is bounded.
$\Rightarrow\exists w\in\mathbb{R}^\ell  \,:\;  F\cap (y^0-T)\subseteq
w+\operatorname*{int}\mathbb{R}_+^\ell$.\\
Consider first an arbitrary $n\in \{1,\ldots,\ell\}$.\\
$z_j^n:=\left\{
\begin{array}{c@{\mbox{ for }}l}
w_n & j=n,\\
y_j^0 & j\in \{ 1,\ldots,\ell\} \setminus \{ n \}.
\end{array}
\right.\quad $
$\Rightarrow y^0-z^n\in \mathbb{R}_+^\ell\setminus \{ 0 \} \subseteq H$.\\
Define $u^n\in \mathbb{R}^\ell$ by
$u_j^n:=\left\{
\begin{array}{c@{\mbox{ for }}l}
0 & j=n,\\
-1 & j\in \{ 1,\ldots,\ell\} \setminus \{ n \}.
\end{array}
\right.$\\
Since $H$ is open, there exists some $t_n>0$ with $(y^0-z^n)+t_nu^n\in
H$.\\
$w^n:=z^n-t_nu^n\in y^0-H$, and
$w_j^n=\left\{
\begin{array}{c@{\mbox{ for }}l}
w_n & j=n,\\
y_j^0+t_n & j\in \{ 1,\ldots,\ell\} \setminus \{ n \}.
\end{array}
\right.$\\
We now define $K:=\max \left( \max\limits_{i\in \{ 1,\ldots,\ell\} }\,
\frac{2(y_i^0-w_i)}{t_i} ,\, \max\limits_{i\in \{ 1,\ldots,\ell\} \atop j\in \{ 1,\ldots ,m\} }
(\frac{1}{s_i^j}\sum\limits_{r=1 \atop r\neq i}^\ell s_r^j +1)\right)
>0$.\\
For each $y\in D^K$, there exists some $i\in\{1,\ldots,\ell\}$ with
$y_i>0$ and
$y_i+Ky_j>0$ for all $j\in\{1,\ldots,\ell\}\setminus\{i\}$.
$\Rightarrow \sum\limits_{r=1}^\ell s_r^j y_r=s_i^j y_i+
\sum\limits_{r=1 \atop r\neq i}^\ell s_r^j y_r>y_i(s_i^j-\frac{1}{K}
\sum\limits_{r=1 \atop r\neq i}^\ell s_r^j )\ge 0 \quad\forall
j\in \{ 1,\ldots ,m\}$
because of the definition of $K$. Thus $D^K\subseteq T$.\\
Suppose: $\exists y\in F\cap (y^0-D^K)$.\\
$\Rightarrow y^0-y\in D^K$.\\
$\Rightarrow \exists i\in \{ 1,\ldots,\ell\}: \: y_i^0-y_i>0$ and
$y_i^0-y_i+K(y_j^0-y_j)>0$ for all $j\in \{ 1,\ldots,\ell\}
\setminus \{ i \}$.\\
Because of $y^0\in w+\operatorname*{int}\mathbb{R}_+^\ell$ and
$y\in F\cap(y^0-D^K)\subseteq F\cap(y^0-T)\subseteq w+\operatorname*{int}\mathbb{R}_+^\ell$, we have $w_i<y_i^0$ and $w_i<y_i$. This implies $2w_i<y_i+y_i^0$. $\Rightarrow -y_i<y_i^0-2w_i$. $\Rightarrow y_i^0-y_i<2(y_i^0-w_i)$.\\
Hence, $t:=\frac{y_i^0-y_i}{2(y_i^0-w_i)}\in (0,1)$.\\
$\Rightarrow y_i=y_i^0+2t(w_i-y_i^0)<y_i^0+t(w_i-y_i^0)$, since
$w_i-y_i^0<0$.\\
For each $j\in \{ 1,\ldots,\ell\} \setminus \{ i \}$, we get by $K\geq\frac{2(y_i^0-w_i)}{t_i}$:\\
$y_j^0-y_j>-\frac{1}{K}(y_i^0-y_i)\geq -t_i\frac{y_i^0-y_i}{2(y_i^0-w_i)}=-t_it$, hence $y_j<y_j^0+tt_i$.\\
$\Rightarrow y\in y^0+t(w^i-y^0)-\operatorname*{int}\mathbb{R}_+^\ell$.
$H$ convex, $0\in \operatorname*{bd}H$, $y^0-w^i\in H$. $\Rightarrow
t(y^0-w^i)\in H$.\\
$\Rightarrow y\in y^0-H-\operatorname*{int}\mathbb{R}_+^\ell =y^0-(H+\operatorname*{int}\mathbb{R}_+^\ell )\subseteq
y^0-H$.\\
$\Rightarrow y\in F\cap (y^0-H)$, a contradiction to $y^0\in
\operatorname*{Min}(F,H)$.
$\Rightarrow $ The supposition is wrong.\\
$\Rightarrow y^0\in \operatorname*{Min}(F,D^K)\subseteq \operatorname*{GMin}(F)$.
\end{itemize}
\end{proof}

\begin{remark}
The statement
\begin{eqnarray*}
\operatorname*{GMin}(F) & \supseteq & \{y^0\in F \mid  \exists \varphi : \mathbb{R}^{\ell } \to \mathbb{R}
\mbox{
strictly }
\mathbb{R} _+^\ell\mbox{--monotone, sublinear,}\\
& & \;\mbox{continuous}:\,\forall y\in F:\; \varphi (y-y^0)\geq 0\}
\end{eqnarray*}
also follows from Proposition 5.5 in \cite{dasa87}, where a corresponding assertion for properly efficient elements according to Benson \cite{bens79} had been proved in
partially ordered topological vector spaces.
\end{remark}

The inclusion in Theorem \ref{hab-t521}(b) cannot be replaced by an equation, which is illustrated by the following example.

\begin{example}\label{hab-e522}
Define $g:\mathbb{R}\to\mathbb{R}$ by
\[ g(x):=\left\{\begin{array}{l@{\quad\mbox{ for }\quad}l}
e^{x} -1 & x<0,\\x^2 + 2x& x\geq 0,\end{array}\right.\]
and $\varphi :\mathbb{R}^2\to\mathbb{R}$ by $\varphi ((y_1,y_2)^T)=g(y_1) + g(y_2)$.\\
Then $\varphi$ is continuous, strictly convex and strictly $\mathbb{R} _+^2$--monotone.\\
$F:=\{ (y_1,y_2)^T\in \mathbb{R}^2 \mid  y_1=1,\, y_2\leq -1\}\cup\{ (y_1,y_2)^T\in \mathbb{R}^2 \mid  y_1\in
[-1,1],\, y_1+y_2=0\}$.\\
$\varphi ((0,0)^T)=0<\varphi (y)\quad\forall y\in F\setminus \{ (0,0)^T \}$, but $\operatorname*{GMin}(F)=\emptyset$. 
\end{example}

The assumption in Theorem \ref{hab-t521}(c) is also not superfluous for the second equation.

\begin{example}\label{ex-NInotHe-folg}
Consider, like in Example \ref{ex-NInotHe}, $Y=\mathbb{R}^2$, the set $F:=\{ (y_1,y_2)^T \in \mathbb{R}^2\mid y_1<0,\, y_2=\frac{1}{y_1}\}+\mathbb{R}^2_+$ and $H:=- (Y\setminus \operatorname*{int}F)-(1,1)^T$. For $k:=(1,1)^T$ and $\varphi :=\varphi _{-H,k}$, we get 
$\operatorname*{Min}(F)=\operatorname*{argmin}_{F}\varphi$. This set coincides with the boundary of $F$ and contains more than one element. $\varphi $ is convex by \cite[Proposition 2.1]{Wei17a}, finite-valued and continuous by \cite[Theorem 3.1]{Wei17a}, strictly $\mathbb{R}_+^{\ell }$--monotone by \cite[Theorem 2.16]{Wei17a}. 
This implies, together with Corollary \ref{cor-jota1},
\begin{eqnarray*}
\operatorname*{Min}(F) & = & \{y^0\in F \mid  \exists \varphi :\mathbb{R}^{\ell}\to \mathbb{R}
\mbox{ strictly }
\mathbb{R}_+^\ell \mbox{--monotone, convex, continuous:}\\
 & & \varphi (y^0)=\min\limits_{y\in F}\, \varphi (y) \},
\end{eqnarray*}
though $\operatorname*{GMin}(F)=\emptyset$. 
\end{example} 

We get from \cite[Proposition 6]{Wei16b}:
\begin{eqnarray*}
\operatorname*{Min}(F) & \supseteq & \{y^0\in F \mid  \exists \varphi : \mathbb{R}^{\ell } \to \mathbb{R}
\mbox{ strictly } \mathbb{R} _+^\ell\mbox{--monotone, continuous}:\\
& & \;\forall y\in F\setminus\{y^0\}:\; \varphi (y) > \varphi (y^0)\}.
\end{eqnarray*}

Luc \cite[p.85]{Luc89b} illustrated by an example that the above inclusion cannot be replaced by an equation.
As we will show in Example \ref{hab-e523}, the elements $y^0$ in this inclusion are not necessarily unique minimizers of a convex strictly $\mathbb{R}_+^\ell $--monotone continuous functional on $F$. 

\begin{example}\label{hab-e523}
Consider $Y=\mathbb{R}^2$, 
$F=\{(y_1,y_2)^T\in \mathbb{R} ^2 \mid  -1\leq y_1\leq 0,\, -y_1\leq y_2\leq 1\}\cup\{ (y_1,y_2)^T\in
\mathbb{R}^2 \mid 
0\leq y_1\leq 1,\, -\frac{1}{2}\sqrt{y_1}\leq y_2\leq 1\}$,
$k=(1,1)^T$.\\
$H:=-(\{(y_1,y_2)^T\in \mathbb{R}^2 \mid  y_1\leq 0,\, y_1+2y_2<0\}\cup\{
(y_1,y_2)^T\in \mathbb{R}^2 \mid 
y_1>0,\, y_2<-\sqrt{y_1}\})$ is open, $(0,0)^T\in \operatorname*{bd}H$, $\mathbb{R}_+^2\setminus \{ (0,0)^T \} \subseteq
H$.
$\Rightarrow \varphi _{-\operatorname*{cl}H,k}$ is continuous and strictly $\mathbb{R}_+^2$--monotone by 
\cite[Theorem 3.1]{Wei17a} and \cite[Theorem 2.16]{Wei17a}.\\
$\varphi _{-\operatorname*{cl}H,k}((0,0)^T)=0<\varphi _{-\operatorname*{cl}H,k}(y)\quad\forall y\in F\setminus \{ (0,0)^T \}$.\\
If there would exist some strictly $\mathbb{R}_+^2$--monotone continuous convex
functional $\varphi :\mathbb{R}^2\to
\mathbb{R}$ with $\varphi (y)>\varphi ((0,0)^T)$
for all $y\in F\setminus\{(0,0)^T\}$, this would be equivalent to $(0,0)^T\in
\operatorname*{GMin}(F)$ by Theorem \ref{hab-t521}(c),
but this is not fulfilled.
\end{example}

Theorem \ref{hab-t521} implies together with Corollary \ref{cor-jota1}:

\begin{theorem}\label{t-Geoff-NI}
If {\rm (Sp-ex)} holds, then
$\operatorname*{GMin}(F)  = \operatorname*{NI-PMin}(F,\mathbb{R}_+^{\ell })$.
\end{theorem}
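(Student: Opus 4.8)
The plan is to observe that both sides of the claimed equality admit the \emph{same} characterization as the set of points of $F$ at which some continuous convex strictly $\mathbb{R}_+^{\ell}$--monotone real-valued functional on $\mathbb{R}^{\ell}$ attains its minimum over $F$; the equality then follows immediately. Concretely, I would first record that the hypotheses of Corollary \ref{cor-jota1} are met for $D=\mathbb{R}_+^{\ell}$: the vector $k:=(1,\ldots,1)^T$ lies in $\operatorname*{int}\mathbb{R}_+^{\ell}=\operatorname*{core}\mathbb{R}_+^{\ell}$, and $\mathbb{R}_+^{\ell}$ is a non-trivial cone (here $\ell\geq 2$), so by the remark following Corollary \ref{c-NI-H} we indeed have $Y=\mathbb{R}^{\ell}=\mathbb{R}_+^{\ell}+\mathbb{R}k$ with $\mathbb{R}_{>}k\subseteq\mathbb{R}_+^{\ell}$. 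Hence Corollary \ref{cor-jota1} applies with $D=\mathbb{R}_+^{\ell}$ and yields that $y^0\in\operatorname*{NI-PMin}(F,\mathbb{R}_+^{\ell})$ if and only if $y^0\in F$ and there exists a continuous convex strictly $\mathbb{R}_+^{\ell}$--monotone functional $\varphi:\mathbb{R}^{\ell}\to\mathbb{R}$ with $\varphi(y^0)=\min_{y\in F}\varphi(y)$.

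Next I would invoke Theorem \ref{hab-t521}(c): under the standing assumption (Sp-ex), its second displayed equation states precisely that
\[
\operatorname*{GMin}(F)=\{y^0\in F\mid \exists\,\varphi:\mathbb{R}^{\ell}\to\mathbb{R}\ \text{strictly}\ \mathbb{R}_+^{\ell}\text{--monotone, convex, continuous}:\ \varphi(y^0)=\min_{y\in F}\varphi(y)\}.
\]
Comparing this with the characterization of $\operatorname*{NI-PMin}(F,\mathbb{R}_+^{\ell})$ obtained from Corollary \ref{cor-jota1} in the previous step, the two sets are described by literally the same condition on $y^0$, so $\operatorname*{GMin}(F)=\operatorname*{NI-PMin}(F,\mathbb{R}_+^{\ell})$, which is the assertion.

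In this argument there is no real analytic obstacle: the genuinely hard work — showing that every minimizer of a continuous convex strictly $\mathbb{R}_+^{\ell}$--monotone functional on a feasible set satisfying (Sp-ex) is Geoffrion-properly efficient — has already been carried out in the proof of Theorem \ref{hab-t521}(c) via the construction of the cone $D^K\subseteq T$. The only point that requires a moment's care here is the verification that Corollary \ref{cor-jota1} is legitimately applicable to $D=\mathbb{R}_+^{\ell}$, i.e. that $(1,\ldots,1)^T\in\operatorname*{core}\mathbb{R}_+^{\ell}$ and that $\mathbb{R}_+^{\ell}$ is non-trivial, both of which are elementary. (One could equally phrase the chain of inclusions directly: $\operatorname*{GMin}(F)\subseteq\operatorname*{NI-PMin}(F,\mathbb{R}_+^{\ell})$ because a Geoffrion-properly efficient point is, by Theorem \ref{hab-t521}(a)(i) together with Proposition \ref{l-He-NI}, a point in $\operatorname*{He-PMin}(F,\mathbb{R}_+^{\ell})\subseteq\operatorname*{NI-PMin}(F,\mathbb{R}_+^{\ell})$, while the reverse inclusion uses Corollary \ref{cor-jota1} to produce the functional and then Theorem \ref{hab-t521}(c) to recover $\operatorname*{GMin}(F)$ — but routing everything through the common functional characterization is cleanest.)
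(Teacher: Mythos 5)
Your proposal is correct and is essentially the paper's own argument: the paper derives Theorem \ref{t-Geoff-NI} precisely by combining the functional characterization of $\operatorname*{GMin}(F)$ in Theorem \ref{hab-t521}(c) under {\rm (Sp-ex)} with the characterization of $\operatorname*{NI-PMin}(F,\mathbb{R}_+^{\ell})$ from Corollary \ref{cor-jota1}. Your verification that Corollary \ref{cor-jota1} applies to $D=\mathbb{R}_+^{\ell}$ with $k=(1,\ldots,1)^T$ is a correct (and welcome) spelling-out of a detail the paper leaves implicit.
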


Thus, we get by Proposition \ref{hab-s527}:

\begin{theorem}\label{t-Geoff-NIcl}
If $F$ is a closed set, then $\operatorname*{GMin}(F)=\emptyset$ or
$\operatorname*{GMin}(F)  = \operatorname*{NI-PMin}(F,\mathbb{R}_+^{\ell })$.
\end{theorem}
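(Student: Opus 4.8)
The plan is to reduce the statement to Theorem~\ref{t-Geoff-NI} by using the existence criterion of Proposition~\ref{hab-s527}. First I would dispose of the trivial alternative: if $\operatorname*{GMin}(F)=\emptyset$, then the first case of the dichotomy holds and there is nothing more to prove. In particular this covers the degenerate situation $F=\emptyset$, since $\operatorname*{GMin}(\emptyset)=\emptyset$. So from now on I may assume $\operatorname*{GMin}(F)\neq\emptyset$, and I note that this forces $F\neq\emptyset$, because $\operatorname*{GMin}(F)\subseteq\operatorname*{Min}(F)\subseteq F$.

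Now $F$ is nonempty and closed with $\operatorname*{GMin}(F)\neq\emptyset$, so Proposition~\ref{hab-s527}(c) applies and tells me that condition~(ii) of Proposition~\ref{hab-s527}(a) holds for some $u\in\mathbb{R}^\ell$; that is, there exist a polyhedral cone $S\subseteq\mathbb{R}^\ell$ and some $u\in\mathbb{R}^\ell$ with $\mathbb{R}_+^\ell\setminus\{0\}\subseteq\operatorname*{int}S$ and $(F-u)\cap(-\operatorname*{int}S)=\emptyset$. This is exactly the assumption (Sp-ex). With (Sp-ex) established, Theorem~\ref{t-Geoff-NI} gives $\operatorname*{GMin}(F)=\operatorname*{NI-PMin}(F,\mathbb{R}_+^{\ell})$, which is the second case of the dichotomy. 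Combining the two cases yields the claim.

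Since every step is a direct invocation of a result already proved in the excerpt, I expect no real obstacle here; the only points that need a word of care are the bookkeeping of the empty-set case under the first alternative and the remark that $\operatorname*{GMin}(F)\neq\emptyset$ guarantees $F\neq\emptyset$, so that the hypotheses of Proposition~\ref{hab-s527}(c) are genuinely met before it is applied.
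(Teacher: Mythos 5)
Your proposal is correct and follows essentially the same route as the paper, which derives the theorem from Theorem \ref{t-Geoff-NI} via Proposition \ref{hab-s527}: when $\operatorname*{GMin}(F)\neq\emptyset$ (hence $F\neq\emptyset$) and $F$ is closed, the equivalence in Proposition \ref{hab-s527} yields (Sp-ex), and Theorem \ref{t-Geoff-NI} gives the equality. Your attention to the empty-set bookkeeping and to $F\neq\emptyset$ is exactly the small care the paper leaves implicit.
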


Corollary \ref{c-NI-H} implies by Theorem \ref{t-Geoff-NI}:

\begin{corollary}
Assume {\rm (Sp-ex)}, $k\in \operatorname*{int}\mathbb{R}_+^{\ell }$.\\
Then $y^0\in \operatorname*{GMin}(F)$ if and only if there exists some closed convex set $H$ with $0\in \operatorname*{bd}H$ and $H+(\mathbb{R}_+^{\ell }\setminus \{ 0 \})\subseteq \operatorname*{int}H$ such that (a) or (b) holds. Note that (a) and (b) are equivalent to each other.
\begin{itemize}
\item[\rm (a)] $\varphi_{y^0-H,k}(y^0)=\operatorname*{min}_{y\in F}\varphi_{y^0-H,k}(y)=0$.
\item[\rm (b)] $\varphi_{-H,k}(y^0-y^0)=\operatorname*{min}_{y\in F}\varphi_{-H,k}(y-y^0)=0$.
\end{itemize}
Both functionals are finite-valued, continuous, convex and strictly $\mathbb{R}_+^{\ell }$--monotone.
\end{corollary}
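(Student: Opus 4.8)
The plan is to derive this corollary by composing Theorem~\ref{t-Geoff-NI} with Corollary~\ref{c-NI-H}; the argument is then essentially bookkeeping. First, since {\rm (Sp-ex)} is assumed, Theorem~\ref{t-Geoff-NI} gives $\operatorname*{GMin}(F)=\operatorname*{NI-PMin}(F,\mathbb{R}_+^{\ell})$, so the condition ``$y^0\in\operatorname*{GMin}(F)$'' in the claim may be replaced throughout by ``$y^0\in\operatorname*{NI-PMin}(F,\mathbb{R}_+^{\ell})$''.

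Next I would check that the standing hypothesis of Corollary~\ref{c-NI-H}, namely $Y=D+\mathbb{R}k$ for some $k\in Y\setminus\{0\}$ with $\mathbb{R}_>k\subseteq D$, holds with $D:=\mathbb{R}_+^{\ell}$ and the given $k\in\operatorname*{int}\mathbb{R}_+^{\ell}$. Indeed $\mathbb{R}_>k\subseteq\operatorname*{int}\mathbb{R}_+^{\ell}\subseteq\mathbb{R}_+^{\ell}$, and $\mathbb{R}^{\ell}=\mathbb{R}_+^{\ell}+\mathbb{R}k$ because $k\in\operatorname*{core}\mathbb{R}_+^{\ell}$; this is precisely the situation recorded in the remark following Corollary~\ref{c-NI-H} (``$D$ a non-trivial cone with $k\in\operatorname*{core}D$''). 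With this verified, Corollary~\ref{c-NI-H} applied to $D=\mathbb{R}_+^{\ell}$ and $k$ states exactly that $y^0\in\operatorname*{NI-PMin}(F,\mathbb{R}_+^{\ell})$ is equivalent to the existence of a closed convex set $H$ with $0\in\operatorname*{bd}H$ and $H+(\mathbb{R}_+^{\ell}\setminus\{0\})\subseteq\operatorname*{int}H$ for which (a) or (b) holds, that (a) and (b) are equivalent, and that $\varphi_{y^0-H,k}$ and $\varphi_{-H,k}$ are finite-valued, continuous, convex and strictly $\mathbb{R}_+^{\ell}$--monotone. Combining this with the first step yields the assertion.

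I do not expect a genuine obstacle: all the analytic content already sits in Theorem~\ref{hab-t521} (via Theorem~\ref{t-Geoff-NI}) and in the Gerstewitz-functional results invoked by Corollary~\ref{c-NI-H}. The only points meriting a line of care are, first, that the ``arbitrary'' choice of $k\in\operatorname*{int}\mathbb{R}_+^{\ell}$ is admissible, i.e.\ that Corollary~\ref{c-NI-H} imposes nothing on $k$ beyond $\mathbb{R}_>k\subseteq D$ and $Y=D+\mathbb{R}k$ --- which every $k\in\operatorname*{int}\mathbb{R}_+^{\ell}$ in $\mathbb{R}^{\ell}$ satisfies; and second, that the set $H$ delivered by Corollary~\ref{c-NI-H} is literally the object described in the present statement, which is immediate on comparing the two formulations.
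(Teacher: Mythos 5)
Your proposal is correct and coincides with the paper's own derivation: the corollary is stated there precisely as the combination of Theorem \ref{t-Geoff-NI} (giving $\operatorname*{GMin}(F)=\operatorname*{NI-PMin}(F,\mathbb{R}_+^{\ell})$ under (Sp-ex)) with Corollary \ref{c-NI-H} applied to $D=\mathbb{R}_+^{\ell}$. Your check that any $k\in\operatorname*{int}\mathbb{R}_+^{\ell}$ satisfies the hypotheses of Corollary \ref{c-NI-H} is exactly the remark the paper makes about $k\in\operatorname*{core}D$, so nothing is missing.
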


\begin{proposition}\label{hab-s62_12}
Assume {\rm (Sp-ex)}, $a\in \mathbb{R}^\ell$, $k\in \mathbb{R}^\ell\setminus\{0\}$, and that $H$ is a proper, closed, convex subset of $\mathbb{R}^\ell$ with $\mathbb{R}^\ell=H+\mathbb{R}k$, $H+\mathbb{R}_>k\subseteq\operatorname*{int}H$ and
$H+(\mathbb{R}^\ell_+\setminus\{0\})\subseteq\operatorname*{int}H$.
Then $\operatorname*{argmin}_{F}\varphi _{a-H,k}\subseteq \operatorname*{GMin}(F)$.
\end{proposition}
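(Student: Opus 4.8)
The plan is to reduce the statement to Theorem \ref{t-Geoff-NI}. That theorem says that under {\rm (Sp-ex)} one has $\operatorname*{GMin}(F)=\operatorname*{NI-PMin}(F,\mathbb{R}_+^{\ell})$, so it suffices to show that every $y^0\in\operatorname*{argmin}_F\varphi_{a-H,k}$ lies in $\operatorname*{NI-PMin}(F,\mathbb{R}_+^{\ell})$, i.e.\ that $y^0\in\operatorname*{WMin}(F,H')$ for a suitable closed convex set $H'$ with $0\in\operatorname*{bd}H'$ and $H'+(\mathbb{R}_+^{\ell}\setminus\{0\})\subseteq\operatorname*{int}H'$. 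The hypothesis {\rm (Sp-ex)} enters only through this reduction and cannot be dropped, since in general $\operatorname*{NI-PMin}$ is strictly larger than $\operatorname*{GMin}$ (Example \ref{ex-NInotHe}).

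First I would collect the relevant properties of $\varphi:=\varphi_{a-H,k}$. The set $a-H$ is closed and convex, satisfies $(a-H)+\mathbb{R}k=\mathbb{R}^{\ell}$, and, upon translating the hypothesis $H+\mathbb{R}_>k\subseteq\operatorname*{int}H$, also satisfies $(a-H)-\mathbb{R}_>k\subseteq\operatorname*{int}(a-H)$ (note $\operatorname*{int}H\neq\emptyset$, since $h+k\in\operatorname*{int}H$ for any $h\in H$, and $H\neq\emptyset$ because $\mathbb{R}^\ell=H+\mathbb{R}k$). Hence, by the properties of Gerstewitz functionals \cite[Theorem 3.1]{Wei17a}, $\varphi$ is finite-valued and continuous and
\[\{y\mid\varphi(y)\le t\}=a-H+tk,\qquad\{y\mid\varphi(y)<t\}=a-\operatorname*{int}H+tk\qquad(t\in\mathbb{R}).\]

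Then I would take $y^0\in\operatorname*{argmin}_F\varphi$, put $t_0:=\varphi(y^0)=\min_{y\in F}\varphi(y)\in\mathbb{R}$ and $b:=a+t_0k-y^0$. From $\varphi(y^0)\le t_0$ one gets $b\in H$; from $\varphi(y^0)\not<t_0$ and closedness of $H$ one gets $b\notin\operatorname*{int}H$, so $b\in\operatorname*{bd}H$; and from $\varphi\ge t_0$ on $F$ one gets $F\cap(a-\operatorname*{int}H+t_0k)=\emptyset$. The key step is the translation $H':=H-b$: then $H'$ is closed and convex, $0=b-b\in\operatorname*{bd}H'$, $\operatorname*{int}H'=\operatorname*{int}H-b$, and translating $H+(\mathbb{R}_+^{\ell}\setminus\{0\})\subseteq\operatorname*{int}H$ by $-b$ yields $H'+(\mathbb{R}_+^{\ell}\setminus\{0\})\subseteq\operatorname*{int}H'$. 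Moreover $y^0-\operatorname*{int}H'=a-\operatorname*{int}H+t_0k$, so $F\cap(y^0-\operatorname*{int}H')=\emptyset$, and since $y^0\in F$ this gives $y^0\in\operatorname*{Min}(F,\operatorname*{int}H')=\operatorname*{WMin}(F,H')$. Thus $y^0\in\operatorname*{NI-PMin}(F,\mathbb{R}_+^{\ell})$ by Definition \ref{d-Henig}(b), and Theorem \ref{t-Geoff-NI} closes the argument.

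The only real difficulty is bookkeeping rather than conceptual: one must get the sign right when passing from the hypothesis on $H$ to the condition $(a-H)-\mathbb{R}_>k\subseteq\operatorname*{int}(a-H)$ needed to apply \cite[Theorem 3.1]{Wei17a}, and one must track the translation by $b$ carefully so that $0$ lands exactly on $\operatorname*{bd}H'$ while the disjointness of $F$ with $y^0-\operatorname*{int}H'$ is preserved.
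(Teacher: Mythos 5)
Your argument is correct, but it takes a different route than the paper. The paper's own proof is two lines: it records that $\varphi_{a-H,k}$ is finite-valued, continuous, convex and strictly $\mathbb{R}^\ell_+$--monotone (citing \cite[Prop.~4.5]{Wei17a} and \cite[Theorem 2.16]{Wei17a}) and then applies Theorem \ref{hab-t521}(c) directly, since under (Sp-ex) the minimizers of such functionals on $F$ are exactly the elements of $\operatorname*{GMin}(F)$. You instead work with the sublevel-set geometry of $\varphi_{a-H,k}$: translating $H$ by $b=a+t_0k-y^0\in\operatorname*{bd}H$ produces a closed convex set $H'=H-b$ with $0\in\operatorname*{bd}H'$ and $H'+(\mathbb{R}^\ell_+\setminus\{0\})\subseteq\operatorname*{int}H'$ such that $F\cap(y^0-\operatorname*{int}H')=\emptyset$, which puts $y^0$ in $\operatorname*{NI-PMin}(F,\mathbb{R}^\ell_+)$, and Theorem \ref{t-Geoff-NI} then yields $y^0\in\operatorname*{GMin}(F)$; all the translations and level-set identities you use check out, and there is no circularity since Theorem \ref{t-Geoff-NI} precedes the proposition. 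The trade-off: the paper's route is shorter because it exploits the monotonicity machinery for Gerstewitz functionals (the hypothesis $H+(\mathbb{R}^\ell_+\setminus\{0\})\subseteq\operatorname*{int}H$ enters via strict monotonicity), whereas your route never needs convexity or strict monotonicity of $\varphi_{a-H,k}$ explicitly and instead re-derives, in effect, one direction of Corollary \ref{c-NI-H} by hand, making visible exactly where each hypothesis on $H$ is used; since Theorem \ref{t-Geoff-NI} itself rests on Theorem \ref{hab-t521}(c) and Corollary \ref{cor-jota1}, both proofs ultimately lean on the same underlying result, and both use (Sp-ex) only through that final identification. One cosmetic point: finite-valuedness and continuity of $\varphi_{a-H,k}$ under $\mathbb{R}^\ell=H+\mathbb{R}k$ is what the paper takes from \cite[Prop.~4.5]{Wei17a} rather than from \cite[Theorem 3.1]{Wei17a} alone, though your hypotheses do guarantee it (a line in direction $k$ inside the convex closed set $H$ would force $H=H+\mathbb{R}k=\mathbb{R}^\ell$, contradicting properness).
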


\begin{proof}
By \cite[Prop. 4.5]{Wei17a} and \cite[Theorem 2.16]{Wei17a}, $\varphi _{a-H,k}$ is finite-valued, continuous, convex and strictly $\mathbb{R}^\ell_+$--monotone. 
The assertion follows from Theorem \ref{hab-t521}.
\end{proof}

Note that the assumption $\mathbb{R}^\ell=H+\mathbb{R}k$ in Proposition \ref{hab-s62_12} is satisfied, if the other assumptions and $k\in\operatorname*{int}0^+H$ hold.

\begin{proposition}\label{hab-s62_13}
\begin{itemize}
\item[]
\item[\rm (a)] Assume that $H\subset \mathbb{R}^\ell$ is a non-trivial, closed, convex cone  with
$\mathbb{R}^\ell _+\setminus\{0\} \subset \operatorname*{int}H$, $k\in \operatorname*{int}H$ and
$a\in \mathbb{R}^\ell$.
Then $\operatorname*{argmin}_{F}\varphi _{a-H,k}\subseteq \operatorname*{GMin}(F)$.
\item[\rm (b)] For each $y^0\in \operatorname*{GMin}(F)$, there exists some
non-trivial, polyhedral cone $H\subset \mathbb{R}^\ell$ with $\mathbb{R}^\ell _+\setminus\{0\} \subset \operatorname*{int}H$ such that $y^0$ is a unique minimizer of $\varphi _{y^0-H,k}$ on $F$ for each $k\in H\setminus\{0\}$.
\end{itemize}
\end{proposition}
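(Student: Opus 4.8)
The plan is to prove (a) and (b) separately, in each case reducing to a characterization of $\operatorname*{GMin}(F)$ already obtained in this section.

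For (a), I would begin from the translation identity $\varphi_{a-H,k}(y)=\varphi_{-H,k}(y-a)$, which holds for all $y\in\mathbb{R}^\ell$ directly by the definition of $\varphi_{A,k}$. Because $H$ is a non-trivial closed convex cone with $\mathbb{R}_+^\ell\setminus\{0\}\subseteq\operatorname*{int}H$ and $k\in\operatorname*{int}H$, the functional $\varphi_{-H,k}:\mathbb{R}^\ell\to\mathbb{R}$ is finite-valued, continuous, sublinear and strictly $\mathbb{R}_+^\ell$-monotone (Corollary \ref{Henig_minima}(b); equivalently Lemma \ref{suff-hab-t512}(b) together with Theorem \ref{hab-t512}(iii),(v)). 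Let $y^0\in\operatorname*{argmin}_{F}\varphi_{a-H,k}$. For any $y\in F$, subadditivity of $\varphi_{-H,k}$ gives
\[
\varphi_{-H,k}(y-a)\le\varphi_{-H,k}(y-y^0)+\varphi_{-H,k}(y^0-a),
\]
while minimality of $y^0$ gives $\varphi_{-H,k}(y-a)\ge\varphi_{-H,k}(y^0-a)$; hence $\varphi_{-H,k}(y-y^0)\ge 0$. Since $\varphi_{-H,k}(0)=0$, the functional $y\mapsto\varphi_{-H,k}(y-y^0)$ attains the value $0$ as its minimum over $F$ at $y^0$, so the first equality in Theorem \ref{hab-t521}(a) yields $y^0\in\operatorname*{GMin}(F)$. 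The point to watch is that this sublinear characterization holds without {\rm (Sp-ex)}, so one must not route the argument through Theorem \ref{hab-t521}(c).

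For (b), I would start from the polyhedral description of $\operatorname*{GMin}(F)$. Given $y^0\in\operatorname*{GMin}(F)$, Proposition \ref{hab-s522} (with $\bar p=2$) yields some $p\ge 2$ with $y^0\in\operatorname*{Min}(F,C^p)$; put $H:=C^p$. A short computation shows $C^p$ is pointed for $p>1$, so $H$ is a non-trivial pointed polyhedral convex cone with $\mathbb{R}_+^\ell\setminus\{0\}\subseteq\operatorname*{int}H$ (Lemma \ref{hab-l522}), and $0\in H$ gives $F\cap(y^0-H)=\{y^0\}$. Now fix any $k\in H\setminus\{0\}$. The key step is the sublevel identity
\[
\{\,y\in\mathbb{R}^\ell\mid\varphi_{y^0-H,k}(y)\le 0\,\}=y^0-H ,
\]
which holds because $y^0-H$ is closed and $-k\in 0^+(y^0-H)=-H$: for a closed set $A$ with $-k\in 0^+A$, the set $\{t\in\mathbb{R}\mid y-tk\in A\}$ is closed and upward closed, so $\varphi_{y^0-H,k}(y)\le 0$ is equivalent to $0$ belonging to it, i.e.\ to $y\in y^0-H$ (cf.\ the properties of Gerstewitz functionals in \cite[Theorem 3.1]{Wei17a}). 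Pointedness of $H$ (so that $tk\notin H$ for $t<0$) additionally forces $\varphi_{y^0-H,k}(y^0)=0$. Hence for every $y\in F\setminus\{y^0\}$ one has $y\notin y^0-H$, so $\varphi_{y^0-H,k}(y)>0=\varphi_{y^0-H,k}(y^0)$, i.e.\ $y^0$ is the unique minimizer of $\varphi_{y^0-H,k}$ on $F$.

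I expect the main obstacle to be organizational rather than deep. In (a) one must exploit that $H$ is a cone (so that $\varphi_{-H,k}$ is sublinear) in order to invoke the Henig-type scalarization and thereby avoid the {\rm (Sp-ex)} hypothesis attached to Theorem \ref{hab-t521}(c). In (b) one must make sure the polyhedral cone chosen is pointed and that the argument tolerates $k$ lying on $\operatorname*{bd}H$, where $\varphi_{y^0-H,k}$ may equal $+\infty$ for some arguments; this is exactly why the sublevel identity above, rather than any finiteness or continuity property of $\varphi_{y^0-H,k}$, is the right instrument.
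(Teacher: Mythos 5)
Your proposal is correct, but it takes a more self-contained route than the paper, which settles both parts essentially by citation. For (a), the paper's proof is one line: $\operatorname*{argmin}_{F}\varphi_{a-H,k}\subseteq \operatorname*{WMin}(F,H)$ by \cite[Theorem 5]{Wei16b}, and then $\operatorname*{WMin}(F,H)\subseteq\operatorname*{GMin}(F)$ by Proposition \ref{p-Geoff-cones}; you instead use the translation identity and subadditivity of the finite-valued sublinear functional $\varphi_{-H,k}$ to exhibit a witness for the characterization in Theorem \ref{hab-t521}(a) (i.e.\ Corollary \ref{Henig_minima}), and you are right that this sublinear characterization needs no {\rm (Sp-ex)}; the only unargued step, $\varphi_{-H,k}(0)=0$, is harmless since a finite-valued positively homogeneous functional vanishes at $0$ (or directly: $-k\notin H$ because $k\in\operatorname*{int}H$ and $H\neq\mathbb{R}^\ell$). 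For (b), both arguments start from Proposition \ref{hab-s522} to get $y^0\in\operatorname*{Min}(F,C^p)$, but the paper then simply applies \cite[Theorem 6]{Wei16b} to this efficient point set, whereas you reprove the needed uniqueness from scratch: the sublevel identity $\{y\mid\varphi_{y^0-H,k}(y)\le 0\}=y^0-H$ (valid since $y^0-H$ is closed with $-k$ in its recession cone), pointedness of $C^p$ for $p>1$ (which yields $\varphi_{y^0-H,k}(y^0)=0$ even when $k\in\operatorname*{bd}H$, where the functional need not be finite-valued), and $F\cap(y^0-C^p)=\{y^0\}$ together give $\varphi_{y^0-H,k}(y)>0=\varphi_{y^0-H,k}(y^0)$ for all $y\in F\setminus\{y^0\}$. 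Your route buys independence from the external reference and an explicit treatment of boundary directions $k$; the paper's route buys brevity by reusing its established scalarization results.
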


\begin{proof}
\begin{itemize}
\item[]
\item[\rm (a)] 
$\operatorname*{argmin}_{F}\varphi _{a-H,k}\subseteq \operatorname*{WMin}(F,H)\subseteq \operatorname*{GMin}(F)$ by \cite[Theorem 5]{Wei16b} and by Proposition \ref{p-Geoff-cones}.
\item[\rm (b)] $y^0\in \operatorname*{GMin}(F)$ implies, by Proposition \ref{hab-s522}, $y^0\in \operatorname*{Min}(F,C^p)$ for some $p\in\mathbb{R}_>$. Apply \cite[Theorem 6]{Wei16b} to this efficient point set.
\end{itemize}
\end{proof}

The results can be applied to scalar optimization problems that generate properly efficient points according to Geoffrion. This was done for different approaches in \cite{weid93} and in \cite{Wei94}.


\begin{thebibliography}{10}
\providecommand{\url}[1]{{#1}}
\providecommand{\urlprefix}{URL }
\expandafter\ifx\csname urlstyle\endcsname\relax
  \providecommand{\doi}[1]{DOI~\discretionary{}{}{}#1}\else
  \providecommand{\doi}{DOI~\discretionary{}{}{}\begingroup
  \urlstyle{rm}\Url}\fi

\bibitem{Wei83}
Weidner, P.: Vergleichende {D}arstellung von {O}ptimalit\"atsbegriffen und
  {D}ualit\"atsans\"atzen in der {V}ektoroptimierung.
\newblock Diploma {T}hesis, Martin-Luther-Universit\"at Halle-Wittenberg (1983)

\bibitem{Wei85}
Weidner, P.: {C}harakterisierung von {M}engen effizienter {E}lemente in
  linearen {R}\"aumen auf der {G}rundlage allgemeiner {B}ezugsmengen.
\newblock Ph{D} {T}hesis, Martin-Luther-Universit\"at Halle-Wittenberg (1985)

\bibitem{Wei90}
Weidner, P.: {E}in {T}rennungskonzept und seine {A}nwendung auf
  {V}ektoroptimierungsverfahren.
\newblock Habilitation {T}hesis, Martin-Luther-Universit\"at Halle-Wittenberg
  (1990)

\bibitem{Wei16b}
Weidner, P.: Scalarization in vector optimization by functions with uniform
  sublevel sets.
\newblock Research {R}eport, ar{X}iv: 1606.08611, HAWK
  Hildesheim/Holzminden/G\"ottingen (2016)

\bibitem{yu74}
Yu, P.: Cone convexity, cone extreme points and nondominated solutions in
  decision problems with multiobjectives.
\newblock J. Optim.~Theory Appl. \textbf{14}(3), 319--377 (1974)

\bibitem{Hen82}
Henig, M.I.: Proper efficiency with respect to cones.
\newblock J. Optim. Theory Appl. \textbf{36}(3), 387--407 (1982)

\bibitem{iwne84}
Iwanow, E., Nehse, R.: O sobstvenno \.effektivnich re\v{s}enijach
  mnogokriterial'nych zada\v{c}.
\newblock Wiss.~Z. TH Ilmenau \textbf{30}(5), 55--60 (1984)

\bibitem{geof68}
Geoffrion, A.: Proper efficiency and the theory of vector maximization.
\newblock J. Math.~Anal.~Appl. \textbf{22}, 618--630 (1968)

\bibitem{GerWei90}
Gerth, C., Weidner, P.: Nonconvex separation theorems and some applications in
  vector optimization.
\newblock J. Optim. Theory Appl. \textbf{67}(2), 297--320 (1990).

\bibitem{Wei17a}
Weidner, P.: Lower semicontinuous functionals with uniform sublevel sets.
\newblock Optimization \textbf{66}(4), 491--505 (2017).

\bibitem{Wei17b}
Weidner, P.: Gerstewitz functionals on linear spaces and functionals with
  uniform sublevel sets.
\newblock J. Optim. Theory Appl. \textbf{173}(3), 812--827 (2017).

\bibitem{bima79}
Bitran, G., Magnanti, T.: The structure of admissible points with respect to
  cone dominance.
\newblock J. Optim.~Theory Appl. \textbf{29}(4), 573--614 (1979)

\bibitem{scho70}
Sch\"onfeld, P.: Some duality theorems for the non--linear vector maximum
  problem.
\newblock Unternehmensforschung \textbf{14}, 51--63 (1970)

\bibitem{geiw85}
Gerstewitz, C., Iwanow, E.: Dualit\"at f\"ur nichtkonvexe
  {V}ektoroptimierungsprobleme.
\newblock Wiss.~Z. TH Ilmenau \textbf{31}(2), 61--81 (1985)

\bibitem{bens79}
Benson, H.: An improved definition of proper efficiency for vector maximization
  with respect to cones.
\newblock J. Math.~Anal.~Appl. \textbf{71}, 232--241 (1979)

\bibitem{zali87}
Zalinescu, C.: On two notions of proper efficiency.
\newblock In: B.~Brosowski, E.~Martensen (eds.) Optimization in mathematical
  physics, pp. 77--86. Verlag Peter Lang, Frankfurt a.~M. (1987)

\bibitem{GopRiaTamZal:03}
G{\"o}pfert, A., Riahi, H., Tammer, C., Z{\u{a}}linescu, C.: Variational
  methods in partially ordered spaces.
\newblock Springer-Verlag, New York (2003)

\bibitem{Zal:86a}
Z{\u{a}}linescu, C.: On a class of convex sets.
\newblock Comment. Math. Univ. Carolin. \textbf{27}(3), 543--549 (1986)

\bibitem{weid93}
Weidner, P.: Advantages of hyperbola efficiency.
\newblock In: B.~Brosowski, J.~Ester, S.~Helbig, R.~Nehse (eds.) Multicriteria
  decision, pp. 123--137. Verlag Peter Lang, Frankfurt a.~M. (1993)

\bibitem{pono82}
Podinovskij, V., Nogin, V.: Pareto--optimal'nye re\v{s}enija
  mnogokriterial'nych zada\v{c}.
\newblock Nauka, Moskva (1982)

\bibitem{fock73}
Focke, J.: Vektormaximumproblem und parametrische {O}ptimierung.
\newblock Math.~Ope\-ra\-tions\-forsch.~Statist. \textbf{4}, 365--369 (1973)

\bibitem{helb90e}
Helbig, S.: Experiences with interactive methods for vector optimization
  problems (1990).
\newblock {A}bstrakt eines Vortrages auf der Tagung ''300 Jahre Mathematische
  Gesellschaft in Hamburg''

\bibitem{sola79}
Soland, R.: Multicriteria optimization: a general characterization of efficient
  solutions.
\newblock SECISION -- The Journal of the American Institute for Decision
  Sciences \textbf{10}(1), 26--38 (1979)

\bibitem{dasa87}
Dauer, J.P., Saleh, O.A.: A characterization of proper minimal points as
  solutions of sublinear optimization problems.
\newblock J. Math.~Anal.~Appl. \textbf{178}, 227--246 (1993)

\bibitem{Luc89b}
Luc, D.T.: Theory of vector optimization.
\newblock Springer-Verlag, Berlin (1989)

\bibitem{Wei94}
Weidner, P.: The influence of proper efficiency on optimal solutions of
  scalarizing problems in multicriteria optimization.
\newblock OR Spektrum \textbf{16}(4), 255--260 (1994).

\end{thebibliography}
\def\cfac#1{\ifmmode\setbox7\hbox{$\accent"5E#1$}\else
  \setbox7\hbox{\accent"5E#1}\penalty 10000\relax\fi\raise 1\ht7
  \hbox{\lower1.15ex\hbox to 1\wd7{\hss\accent"13\hss}}\penalty 10000
  \hskip-1\wd7\penalty 10000\box7}
  \def\cfac#1{\ifmmode\setbox7\hbox{$\accent"5E#1$}\else
  \setbox7\hbox{\accent"5E#1}\penalty 10000\relax\fi\raise 1\ht7
  \hbox{\lower1.15ex\hbox to 1\wd7{\hss\accent"13\hss}}\penalty 10000
  \hskip-1\wd7\penalty 10000\box7}
  \def\cfac#1{\ifmmode\setbox7\hbox{$\accent"5E#1$}\else
  \setbox7\hbox{\accent"5E#1}\penalty 10000\relax\fi\raise 1\ht7
  \hbox{\lower1.15ex\hbox to 1\wd7{\hss\accent"13\hss}}\penalty 10000
  \hskip-1\wd7\penalty 10000\box7}
  \def\cfac#1{\ifmmode\setbox7\hbox{$\accent"5E#1$}\else
  \setbox7\hbox{\accent"5E#1}\penalty 10000\relax\fi\raise 1\ht7
  \hbox{\lower1.15ex\hbox to 1\wd7{\hss\accent"13\hss}}\penalty 10000
  \hskip-1\wd7\penalty 10000\box7}
  \def\cfac#1{\ifmmode\setbox7\hbox{$\accent"5E#1$}\else
  \setbox7\hbox{\accent"5E#1}\penalty 10000\relax\fi\raise 1\ht7
  \hbox{\lower1.15ex\hbox to 1\wd7{\hss\accent"13\hss}}\penalty 10000
  \hskip-1\wd7\penalty 10000\box7}
  \def\cfac#1{\ifmmode\setbox7\hbox{$\accent"5E#1$}\else
  \setbox7\hbox{\accent"5E#1}\penalty 10000\relax\fi\raise 1\ht7
  \hbox{\lower1.15ex\hbox to 1\wd7{\hss\accent"13\hss}}\penalty 10000
  \hskip-1\wd7\penalty 10000\box7}
  \def\cfac#1{\ifmmode\setbox7\hbox{$\accent"5E#1$}\else
  \setbox7\hbox{\accent"5E#1}\penalty 10000\relax\fi\raise 1\ht7
  \hbox{\lower1.15ex\hbox to 1\wd7{\hss\accent"13\hss}}\penalty 10000
  \hskip-1\wd7\penalty 10000\box7}
  \def\cfac#1{\ifmmode\setbox7\hbox{$\accent"5E#1$}\else
  \setbox7\hbox{\accent"5E#1}\penalty 10000\relax\fi\raise 1\ht7
  \hbox{\lower1.15ex\hbox to 1\wd7{\hss\accent"13\hss}}\penalty 10000
  \hskip-1\wd7\penalty 10000\box7}
  \def\cfac#1{\ifmmode\setbox7\hbox{$\accent"5E#1$}\else
  \setbox7\hbox{\accent"5E#1}\penalty 10000\relax\fi\raise 1\ht7
  \hbox{\lower1.15ex\hbox to 1\wd7{\hss\accent"13\hss}}\penalty 10000
  \hskip-1\wd7\penalty 10000\box7} \def\Dbar{\leavevmode\lower.6ex\hbox to
  0pt{\hskip-.23ex \accent"16\hss}D}
  \def\cfac#1{\ifmmode\setbox7\hbox{$\accent"5E#1$}\else
  \setbox7\hbox{\accent"5E#1}\penalty 10000\relax\fi\raise 1\ht7
  \hbox{\lower1.15ex\hbox to 1\wd7{\hss\accent"13\hss}}\penalty 10000
  \hskip-1\wd7\penalty 10000\box7} \def\cprime{$'$}
  \def\Dbar{\leavevmode\lower.6ex\hbox to 0pt{\hskip-.23ex \accent"16\hss}D}
  \def\cfac#1{\ifmmode\setbox7\hbox{$\accent"5E#1$}\else
  \setbox7\hbox{\accent"5E#1}\penalty 10000\relax\fi\raise 1\ht7
  \hbox{\lower1.15ex\hbox to 1\wd7{\hss\accent"13\hss}}\penalty 10000
  \hskip-1\wd7\penalty 10000\box7} \def\cprime{$'$}
  \def\Dbar{\leavevmode\lower.6ex\hbox to 0pt{\hskip-.23ex \accent"16\hss}D}
  \def\cfac#1{\ifmmode\setbox7\hbox{$\accent"5E#1$}\else
  \setbox7\hbox{\accent"5E#1}\penalty 10000\relax\fi\raise 1\ht7
  \hbox{\lower1.15ex\hbox to 1\wd7{\hss\accent"13\hss}}\penalty 10000
  \hskip-1\wd7\penalty 10000\box7}
  \def\udot#1{\ifmmode\oalign{$#1$\crcr\hidewidth.\hidewidth
  }\else\oalign{#1\crcr\hidewidth.\hidewidth}\fi}
  \def\cfac#1{\ifmmode\setbox7\hbox{$\accent"5E#1$}\else
  \setbox7\hbox{\accent"5E#1}\penalty 10000\relax\fi\raise 1\ht7
  \hbox{\lower1.15ex\hbox to 1\wd7{\hss\accent"13\hss}}\penalty 10000
  \hskip-1\wd7\penalty 10000\box7} \def\Dbar{\leavevmode\lower.6ex\hbox to
  0pt{\hskip-.23ex \accent"16\hss}D}
  \def\cfac#1{\ifmmode\setbox7\hbox{$\accent"5E#1$}\else
  \setbox7\hbox{\accent"5E#1}\penalty 10000\relax\fi\raise 1\ht7
  \hbox{\lower1.15ex\hbox to 1\wd7{\hss\accent"13\hss}}\penalty 10000
  \hskip-1\wd7\penalty 10000\box7} \def\Dbar{\leavevmode\lower.6ex\hbox to
  0pt{\hskip-.23ex \accent"16\hss}D}
  \def\cfac#1{\ifmmode\setbox7\hbox{$\accent"5E#1$}\else
  \setbox7\hbox{\accent"5E#1}\penalty 10000\relax\fi\raise 1\ht7
  \hbox{\lower1.15ex\hbox to 1\wd7{\hss\accent"13\hss}}\penalty 10000
  \hskip-1\wd7\penalty 10000\box7} \def\Dbar{\leavevmode\lower.6ex\hbox to
  0pt{\hskip-.23ex \accent"16\hss}D}
  \def\cfac#1{\ifmmode\setbox7\hbox{$\accent"5E#1$}\else
  \setbox7\hbox{\accent"5E#1}\penalty 10000\relax\fi\raise 1\ht7
  \hbox{\lower1.15ex\hbox to 1\wd7{\hss\accent"13\hss}}\penalty 10000
  \hskip-1\wd7\penalty 10000\box7}

\end{document}